\newtheorem{theorem}{Theorem}[section]
\newtheorem{lemma}[theorem]{Lemma}
\newtheorem{proposition}[theorem]{Proposition}
\theoremstyle{definition}
\newtheorem{definition}[theorem]{Definition}
\theoremstyle{remark}
\newtheorem{remark}[theorem]{Remark}
\newcommand{\Aff}{A\hspace{-1pt}f\hspace{-2pt}f}
\numberwithin{equation}{section}
\begin{document}
\tikzset{->-/.style={decoration={
  markings,
  mark=at position #1 with {\arrow{>}}},postaction={decorate}}}

\tikzset{-<-/.style={decoration={
  markings,
  mark=at position #1 with {\arrow{<}}},postaction={decorate}}}

\title[Arithmetic of singular character varieties and their $E$-polynomials]{Arithmetic of singular character varieties and their $E$-polynomials}

\author{David Baraglia}

\address{School of Mathematical Sciences, The University of Adelaide, Adelaide SA 5005, Australia}
\email{david.baraglia@adelaide.edu.au}

\author{Pedram Hekmati}

\address{Instituto Nacional de Matem\'atica Pura e Aplicada, Estrada Dona Castorina 110, Rio de Janeiro 22460-320, Brazil}
\email{pedram.hekmati@adelaide.edu.au}

\begin{abstract}
We calculate the $E$-polynomials of the $SL_3(\mathbb{C})$ and $GL_3(\mathbb{C})$-character varieties of compact oriented surfaces of any genus and the $E$-polynomials of the $SL_2(\mathbb{C})$ and $GL_2(\mathbb{C})$-character varieties of compact non-orientable surfaces of any Euler characteristic. Our methods also give a new and significantly simpler computation of the $E$-polynomials of the $SL_2(\mathbb{C})$-character varieties of compact orientable surfaces, which were computed by Logares, Mu\~noz and Newstead for genus $g=1,2$ and by Martinez and Mu\~noz for $g \ge 3$. Our technique is based on the arithmetic of character varieties over finite fields. More specifically, we show how to extend the approach of Hausel and Rodriguez-Villegas used for non-singular (twisted) character varieties to the singular (untwisted) case.
\end{abstract}



\date{\today}


\maketitle


\section{Introduction}

Let $G$ be a complex reductive group and $\Gamma$ a finitely generated group. The character variety $Rep(\Gamma , G)$ is the moduli space of reductive representations of $\Gamma$ into $G$. In this paper we introduce techniques for computing the $E$-polynomials of a number of such character varieties. Recall that for any complex algebraic variety, the $E$-polynomial encodes its virtual Hodge numbers defined with respect to its canonical mixed Hodge structure on compactly supported cohomology.\\

The pioneering work in this subject is the paper of Hausel and Rodriguez-Villegas \cite{harv}, which uses arithmetic techniques to compute the $E$-polynomials of certain non-singular (twisted) character varieties. In this paper, we will show how to extend the approach of Hausel and Rodriguez-Villegas to the case of untwisted character varieties, which are almost always singular, due to the presence of reducible representations. We then proceed to apply our method to compute the $E$-polynomials of a number of such character varieties, as detailed in Section \ref{seccompu}. Amongst other results, we compute the $E$-polynomials of the $SL_3(\mathbb{C})$ and $GL_3(\mathbb{C})$-character varieties for $\Gamma$ the fundamental group of a compact oriented surface (Theorem \ref{thmcos3}) and the $E$-polynomials of the $SL_2(\mathbb{C})$ and $GL_2(\mathbb{C})$-character varieties for $\Gamma$ the fundamental group of a compact non-orientable surface (Theorem \ref{thmcnos2}). We also recover the formula in \cite{mamu2} for $E$-polynomial of the $SL_2(\mathbb{C})$-character variety for the fundamental group of a compact oriented surface, through a much simpler calculation (Theorem \ref{thmcos2}).\\

In this paper $G$ will always be either a general linear group $GL_n(\mathbb{C})$ or a special linear group $SL_n(\mathbb{C})$, though our methods could certainly be applied to other reductive groups. Our first main result is Theorem \ref{thmcounting}, which reduces the computation of the $E$-polynomials of the character varieties $Rep(\Gamma , GL_n(\mathbb{C}))$ and $Rep(\Gamma , SL_n(\mathbb{C}))$ to a problem in arithmetic:

\begin{theorem}\label{thmcounting}
Suppose there is a polynomial $A(t) \in \mathbb{\mathbb{C}}[t]$ and a positive integer $N$ such that for every finite field $\mathbb{F}_q$ of order $q$ with $q = 1 \; ({\rm mod} \; N)$, the number of isomorphism classes of $n$-dimensional reductive representations over $\mathbb{F}_q$ (resp. $n$-dimensional reductive representations over $\mathbb{F}_q$ with trivial determinant) equals $A(q)$. Then $A(q)$ is the $E$-polynomial of the complex character variety $Rep( \Gamma , GL_n(\mathbb{C}))$ (resp. $Rep(\Gamma , SL_n(\mathbb{C}))$). 
\end{theorem}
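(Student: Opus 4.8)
The plan is to deduce this from the theorem of Katz recorded in the appendix to Hausel and Rodriguez-Villegas, which asserts that a separated scheme of finite type over a finitely generated $\mathbb{Z}$-algebra whose number of $\mathbb{F}_q$-points is given by a fixed polynomial $P(q)$ has $E$-polynomial equal to $P(uv)$, and hence, in the single-variable convention $q = uv$, equal to $P(q)$. Accordingly, I would first produce a model: an affine scheme $\mathcal{X}$ of finite type over a ring of the form $\mathbb{Z}[\zeta_N, 1/N]$ (or a suitable localization of $\mathbb{Z}$) whose base change along a fixed embedding into $\mathbb{C}$ recovers the complex character variety $Rep(\Gamma, G)$, and for which $|\mathcal{X}(\mathbb{F}_q)|$ computes the quantity counted in the hypothesis.

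For the construction, write $\Gamma$ by a finite presentation. The representation scheme $\mathrm{Hom}(\Gamma, G)$ is then the closed subscheme of $G^r$, with $r$ the number of generators, cut out by the defining relations; since $GL_n$ and $SL_n$ are group schemes over $\mathbb{Z}$, this is defined over $\mathbb{Z}$. The character variety is the affine GIT quotient $\mathrm{Hom}(\Gamma, G) /\!\!/ G = \mathrm{Spec}\big( \mathcal{O}(\mathrm{Hom}(\Gamma,G))^G \big)$ for the simultaneous conjugation action; invoking Seshadri's geometric invariant theory over a general base, this quotient exists as a scheme of finite type over $\mathbb{Z}[1/N]$ for suitable $N$ and is compatible with base change to $\mathbb{C}$ and to $\mathbb{F}_q$. (For $SL_n$ one additionally imposes the trivial-determinant condition on each generator.)

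The crux is to identify $\mathcal{X}(\mathbb{F}_q)$ with the set of isomorphism classes of $n$-dimensional reductive representations of $\Gamma$ over $\mathbb{F}_q$. Over an algebraically closed field, the closed points of the quotient are in bijection with the closed $G$-orbits in $\mathrm{Hom}(\Gamma,G)$, and these closed orbits are exactly the semisimple (equivalently reductive) representations, by the classical orbit–semisimplicity dictionary of Richardson. To descend this to $\mathbb{F}_q$-points I would use Lang's theorem: because $GL_n$ and $SL_n$ are connected, a closed orbit defined over $\mathbb{F}_q$ carries an $\mathbb{F}_q$-rational point, and the $\mathbb{F}_q$-points of such an orbit form a single $G(\mathbb{F}_q)$-orbit. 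Consequently a Galois-fixed closed orbit contributes exactly one isomorphism class of reductive representation over $\mathbb{F}_q$, yielding a bijection between $\mathcal{X}(\mathbb{F}_q)$ and the isomorphism classes being counted, and hence $|\mathcal{X}(\mathbb{F}_q)| = A(q)$ whenever $q \equiv 1 \pmod N$.

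With the point count established, I would finish by applying Katz's theorem. The only wrinkle is that the hypothesis provides a polynomial count only along the arithmetic progression $q \equiv 1 \pmod N$; this is precisely the setting handled by the version of Katz's theorem that spreads out over $\mathbb{Z}[\zeta_N, 1/N]$, where every residue characteristic splits the $N$-th roots of unity so that the congruence holds automatically, and the conclusion $E(Rep(\Gamma,G)) = A(q)$ follows. I expect the main obstacle to be the identification in the previous paragraph: marrying GIT over a non-field base with the Lang's-theorem descent of closed orbits and the reductive-representation dictionary, so that $\mathbb{F}_q$-points of the arithmetic quotient genuinely match isomorphism classes of reductive representations over $\mathbb{F}_q$, rather than merely Galois-stable closed orbits over $\overline{\mathbb{F}_q}$.
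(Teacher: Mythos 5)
Your plan follows the same architecture as the paper: spread the representation scheme and its GIT quotient out over $\mathbb{Z}[\zeta_N,1/N]$ using Seshadri's invariant theory over a general base (so that maps to $\mathbb{F}_q$ exist exactly when $q = 1 \; ({\rm mod} \; N)$), identify geometric points of the quotient with closed orbits, i.e.\ with reductive representations over $\overline{\mathbb{F}}_q$, descend to $\mathbb{F}_q$ by a vanishing-of-$H^1$ argument, and finish with Katz's theorem. Replacing the paper's use of Hilbert's Theorem 90 (Lemma \ref{lemfixrep}) by Lang's theorem is a legitimate variant for the \emph{existence} of a rational point in a Frobenius-stable closed orbit, since $GL_n$ and $SL_n$ are connected.

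The gap is in your \emph{uniqueness} claim, and it is not cosmetic in the $SL_n$ case. The assertion that ``the $\mathbb{F}_q$-points of such an orbit form a single $G(\mathbb{F}_q)$-orbit'' does not follow from connectedness of $G$; it requires connectedness of the \emph{stabiliser} of a point of the orbit, so that $H^1(\mathbb{F}_q , \mathrm{Stab})$ vanishes. For $G = GL_n$ this holds: the stabiliser of a reductive representation is the unit group of its centraliser algebra, a product of groups $GL_{m_i}$ over finite extensions of $\mathbb{F}_q$, hence connected. For $G = SL_n$ it fails: an absolutely irreducible $\rho \colon \Gamma \to SL_n(\mathbb{F}_q)$ has stabiliser $\mu_n \subset SL_n$, and the $SL_n(\mathbb{F}_q)$-orbits on the $\mathbb{F}_q$-points of its closed orbit are counted by $H^1(\mathbb{F}_q , \mu_n) \cong \mathbb{F}_q^\times / (\mathbb{F}_q^\times)^n$, which is non-trivial whenever $\gcd(n,q-1) > 1$. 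Concretely, if $g \in GL_n(\mathbb{F}_q)$ has $\det(g)$ not an $n$-th power, then $\rho$ and $g\rho g^{-1}$ are both valued in $SL_n(\mathbb{F}_q)$ and lie in the same closed $SL_n(\overline{\mathbb{F}}_q)$-orbit, but they are not $SL_n(\mathbb{F}_q)$-conjugate. So a Frobenius-stable closed orbit genuinely can contain several $SL_n(\mathbb{F}_q)$-conjugacy classes, and your bijection, read literally, would identify $\mathcal{X}(\mathbb{F}_q)$ with the wrong set.

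What rescues the count is that the hypothesis of the theorem counts \emph{isomorphism classes} of representations, i.e.\ $GL_n(\mathbb{F}_q)$-conjugacy classes, and the several $SL_n(\mathbb{F}_q)$-orbits inside one closed orbit do coalesce into a single isomorphism class: since the centre acts trivially by conjugation, the $SL_n(\overline{\mathbb{F}}_q)$-orbit of a representation into $SL_n$ equals its $GL_n(\overline{\mathbb{F}}_q)$-orbit, so you may run your Lang argument with the connected $GL_n$-stabiliser and conclude that all $\mathbb{F}_q$-points of the orbit are $GL_n(\mathbb{F}_q)$-conjugate. This reconciliation of the three conjugacy relations (over $\mathbb{F}_q$, over $\overline{\mathbb{F}}_q$, and special conjugacy over $\overline{\mathbb{F}}_q$) is exactly what the paper isolates as Proposition \ref{propcompare}; your proposal needs this step, or an equivalent, to make the $SL_n$ half of the statement come out.
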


Thus to calculate the $E$-polynomials of the $GL_n(\mathbb{C})$ and $SL_n(\mathbb{C})$-character varieties of $\Gamma$, we need to count $n$-dimensional reductive representations of $\Gamma$ over finite fields, or $n$-dimensional representations with fixed determinant. Our strategy for doing this is as follows. Let $Hom(\Gamma , GL_n(\mathbb{F}_q))$ (resp. $Hom(\Gamma , SL_n(\mathbb{F}_q))$ ) be the set of all homomorphisms from $\Gamma$ into $GL_n(\mathbb{F}_q)$ (resp. $SL_n(\mathbb{F}_q)$). Following \cite{harv}, \cite{mer} we will compute the number of elements of $Hom(\Gamma , GL_n(\mathbb{F}_q))$ and $Hom(\Gamma , SL_n(\mathbb{F}_q))$ using the character theory of the general linear and special linear groups over finite fields. We then count the number of {\em reducible} homomorphisms of $\Gamma$ into $GL_n(\mathbb{F}_q)$ and $SL_n(\mathbb{F}_q)$. Taking all representations and subtracting off the reducible representations gives the number of irreducible representations, from which one can readily obtain the number of reductive representations. In principle this gives a formula, recursive in $n$, for the number of $n$-dimensional representations, at least for certain groups $\Gamma$. However, the computations rapidly increase in complexity with $n$, so in this paper we will only carry out the explicit computations in the cases $n=2,3$.

\subsection{Computations}\label{seccompu}
We state the results of our $E$-polynomial computations. For a given group $\Gamma$ we denote by $A_{GL_n}(q)$ the number of $n$-dimensional reductive representations of $\Gamma$ over finite fields $\mathbb{F}_q$ of order $q$ satisfying $q = 1 \; ({\rm mod} \; N)$, where $N$ is as in Theorem \ref{thmcounting}. Similarly we denote by $A_{SL_n}(q)$ the number of $n$-dimensional reductive representations of $\Gamma$ with trivial determinant. By Theorem \ref{thmcounting}, these will also give the $E$-polynomials of the corresponding complex character varieties.

\begin{theorem}[Free groups]
Let $\Gamma = F_r$ be the free group on $r$ generators. Then:

\begin{equation*}
\begin{aligned}
A_{GL_2}(q) &= (q-1)^r \left(  (q^3-q)^{r-1} -(q^2-q)^{r-1} + q\left( \frac{1}{2}(q+1)^{r-1} + \frac{1}{2}(q-1)^{r-1}\right) \right). \\
A_{SL_2}(q) &= (q^3-q)^{r-1} - (q^2-q)^{r-1} + q\left( \frac{1}{2}(q+1)^{r-1} + \frac{1}{2}(q-1)^{r-1} \right).
\end{aligned}
\end{equation*}

\end{theorem}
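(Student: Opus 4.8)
The plan is to apply the strategy behind Theorem \ref{thmcounting}: count all homomorphisms, subtract the reducible ones to isolate the irreducibles, and then pass to isomorphism classes. Since $\Gamma = F_r$ is free, a homomorphism into any group $G$ is an arbitrary choice of images for the $r$ generators, so $|Hom(F_r,G)| = |G|^r$; in particular $|Hom(F_r, SL_2(\mathbb{F}_q))| = (q^3-q)^r$ and $|Hom(F_r, GL_2(\mathbb{F}_q))| = |GL_2(\mathbb{F}_q)|^r$. Every reductive representation is either irreducible or a direct sum of two characters, so I would compute $A_{SL_2}$ (and likewise $A_{GL_2}$) as the sum of three contributions: the number of isomorphism classes of (i) semisimple reducible representations, (ii) irreducible but not absolutely irreducible representations, and (iii) absolutely irreducible representations. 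Throughout, two representations are identified when conjugate in $GL_2(\mathbb{F}_q) = GL(V)$, even in the $SL_2$ case, since isomorphism is as representations on $V$.

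Contribution (i) is purely combinatorial. A semisimple reducible $SL_2$-representation is $\chi \oplus \chi^{-1}$ for a character $\chi \colon F_r \to \mathbb{F}_q^*$, and its isomorphism class is the unordered pair $\{\chi,\chi^{-1}\}$; among the $(q-1)^r$ characters exactly $2^r$ satisfy $\chi = \chi^{-1}$, giving $\tfrac{1}{2}((q-1)^r + 2^r)$ classes. Contribution (ii) comes from representations whose image lies in a non-split maximal torus $\mathbb{F}_{q^2}^* \subset GL_2(\mathbb{F}_q)$, which for $SL_2$ meets the norm-one subgroup of order $q+1$: such a $\chi$ is irreducible precisely when $\chi \neq \chi^q$, each resulting representation has centralizer $\mathbb{F}_{q^2}^*$ in $GL_2(\mathbb{F}_q)$ and hence a $GL_2$-orbit of size $q^2-q$, and counting homomorphisms into each of the $\tfrac{1}{2}q(q-1)$ non-split tori yields $\tfrac{1}{2}((q+1)^r - 2^r)$ classes. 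A key point to verify here is that each such representation determines its torus uniquely, so there is no overcounting.

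For contribution (iii) I would count absolutely irreducible homomorphisms by subtraction: they are everything left after removing the reducible homomorphisms and the non-absolutely-irreducible ones just counted, and each has centralizer equal to the scalars, hence a $GL_2$-orbit of size $q^3-q$. The remaining task, and the main obstacle, is to count the reducible homomorphisms exactly. Here I would organise them by their number $\ell(\rho)$ of $\mathbb{F}_q$-invariant lines, which for $n=2$ takes only the values $q+1$ (scalar representations, of which there are $2^r$), $2$ (semisimple with distinct characters), and $1$ (non-split extensions). The scalar and semisimple-distinct homomorphisms are counted directly by orbit--stabiliser, while the elusive non-split ones are extracted from the incidence identity $\sum_\rho \ell(\rho) = (q+1)\,|B|^r$, where $B \subset SL_2(\mathbb{F}_q)$ is a Borel subgroup of order $q(q-1)$; this holds because both sides count pairs consisting of a homomorphism together with an invariant line.

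Assembling the three contributions gives $A_{SL_2}(q)$, and I expect the delicate part to be bookkeeping rather than ideas: the various $2^r$ terms arising from the characters with $\chi=\chi^{-1}$, from the scalar representations, and from the reducible count must cancel exactly, leaving the stated closed form (as a check, $r=2$ should collapse to $q^3$, consistent with the $SL_2(\mathbb{C})$-character variety of $F_2$ being $\mathbb{C}^3$). The $GL_2$ computation is entirely parallel, with the $(q-1)^r$ determinant characters playing the role that the $\{\pm 1\}$-valued ones play for $SL_2$; one can either repeat the count verbatim or verify directly that twisting by characters identifies the answer with $(q-1)^r A_{SL_2}(q)$, which is exactly the relation visible in the two displayed formulas.
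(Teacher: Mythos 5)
Your proposal is correct, and it reproduces both formulas, but the way you handle the hard part --- counting the non-semisimple reducible representations --- is genuinely different from the paper. The paper runs everything through the general stratification machine of Section \ref{seccounting}: non-split extensions are enumerated as \emph{isomorphism classes} via projectivised group cohomology (strata $\mathcal{X}_2^{(5)},\mathcal{X}_2^{(6)}$, with $[b^1_A]_q$ classes per pair of characters and stabiliser weights $s_5=1$, $s_6=q$), which requires the lemma $b^1_A=r-1$ (Euler characteristic of $F_r$); the free-group theorem is then the substitution $m_j=j^r$, $b^1=r$ into Theorem \ref{thmcountingredrank2}. You never touch cohomology: you count reducible \emph{homomorphisms} in aggregate by the flag-incidence identity $\sum_\rho \ell(\rho)=(q+1)|B|^r$, peel off the scalar ($\ell=q+1$) and split-semisimple ($\ell=2$) contributions by orbit--stabiliser, and extract the indecomposable count as the remainder. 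I verified that your pipeline gives exactly the stated polynomials: for $SL_2$ the three contributions are $\frac{1}{2}((q-1)^r+2^r)$, $\frac{1}{2}((q+1)^r-2^r)$, and $N_{\rm ai}/(q^3-q)$ with $N_{\rm ai}$ obtained by subtraction, and all $2^r$ terms cancel, leaving the claimed closed form (your $r=2$ check $A_{SL_2}=q^3$ is also right). Your torus-uniqueness point checks out as well: two distinct non-split maximal tori of $GL_2(\mathbb{F}_q)$ meet only in scalars, so an irreducible-not-absolutely-irreducible representation determines its torus. What each approach buys: yours is more elementary and sidesteps computing stabilisers of indecomposables, but it leans on $|Hom(F_r,B)|=|B|^r$ and on the very short list of possible $\ell$-values, so it is tailored to rank $2$ (and to free groups, or at least to $\Gamma$ with $|Hom(\Gamma,B)|$ known); the paper's cohomological bookkeeping is heavier here but is exactly what scales to the rank-$3$ computation and to the surface and knot groups treated later. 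Both you and the paper identify classes by $GL_2(\mathbb{F}_q)$-conjugacy even in the $SL_2$ case, which Proposition \ref{propcompare} justifies.

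One caveat on your final remark: the ``twisting by characters'' shortcut does not by itself prove $A_{GL_2}=(q-1)^rA_{SL_2}$. Twisting $\rho\mapsto\rho\otimes\chi$ shifts the determinant by $\chi^2$, and the squares have index $2^r$ in $Hom(F_r,\mathbb{F}_q^\times)$, so twisting only identifies determinant fibres within the same square class. The fibre-by-fibre strata counts genuinely depend on the square class (the $2^r$-type terms occur only when the determinant character has a square root); it is only after the final cancellation that every fibre carries the same total $A_{SL_2}(q)$. So for the $GL_2$ statement you should take the other branch you offer --- repeat the count verbatim for $GL_2$ (or with fixed determinant) --- which does work.
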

The $E$-polynomial for the $SL_2(\mathbb{C})$ case was computed in \cite{cala} and \cite{more}. The $GL_2(\mathbb{C})$ case was also shown in \cite{more}.

\begin{theorem}[Compact oriented surfaces, $n=2$]\label{thmcos2}
Let $\Gamma = \pi_1(\Sigma_g)$ be the fundamental group of a compact oriented surface $\Sigma_g$ of genus $g$. Then:

\begin{equation*}
\begin{aligned}
A_{GL_2}(q) &= (q-1)^{2g} \left( (q^3-q)^{2g-2} + (q^2-1)^{2g-2} + q\left( \frac{1}{2}(q^2+q)^{2g-2} + \frac{1}{2}(q^2-q)^{2g-2} \right) \right. \\
& \; \; \; \; \; \left. -q(q^2-q)^{2g-2} -q^{2g-2} + q\left( \frac{1}{2}(q+1)^{2g-1} + \frac{1}{2}(q-1)^{2g-1} \right) \right).
\end{aligned}
\end{equation*}
\begin{equation*}
\begin{aligned}
A_{SL_2}(q) &= (q^3-q)^{2g-2} + (q^2-1)^{2g-2} - q(q^2-q)^{2g-2} - 2^{2g}q^{2g-2} \\
& \; \; \; \; \; + \frac{(q-1)}{2}\left(  (q^2+q)^{2g-2} + (q^2-q)^{2g-2} \right) + 2^{2g-1}\left(  (q^2+q)^{2g-2} + (q^2-q)^{2g-2} \right) \\
& \; \; \; \; \; + q\left( \frac{1}{2}(q+1)^{2g-1} + \frac{1}{2}(q-1)^{2g-1} \right).
\end{aligned}
\end{equation*}

\end{theorem}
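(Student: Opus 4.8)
The plan is to apply Theorem~\ref{thmcounting}, reducing the statement to counting isomorphism classes of $2$-dimensional reductive representations of $\Gamma=\pi_1(\Sigma_g)$ over a finite field $\mathbb{F}_q$ (with trivial determinant in the $SL_2$ case) and exhibiting the count as a polynomial in $q$. I would stratify the reductive representations into the irreducible ones and the semisimple reducible ones (direct sums of two characters), count each stratum, and add. Since $\Gamma$ has the single relation $\prod_{i=1}^{g}[a_i,b_i]=1$ and abelianization $\mathbb{Z}^{2g}$, the number of characters $\Gamma\to\mathbb{F}_q^\times$ is $M:=(q-1)^{2g}$, a fact used throughout.

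The semisimple reducible stratum is the most transparent. For $GL_2$ these are the classes $\chi_1\oplus\chi_2$ indexed by unordered pairs of characters (repetition allowed), contributing $\tfrac12 M(M+1)$ classes. For $SL_2$ the determinant constraint forces $\chi_2=\chi_1^{-1}$, so the classes $\chi\oplus\chi^{-1}$ are indexed by orbits of the involution $\chi\mapsto\chi^{-1}$; its fixed points are the $2^{2g}$ characters with $\chi^2=1$ (present because $q$ is odd), giving $\tfrac12\big((q-1)^{2g}+2^{2g}\big)$ classes. This is the first appearance of the $2^{2g}$ terms visible in $A_{SL_2}(q)$.

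For the irreducible stratum I would write the number of irreducible homomorphisms as $|Hom(\Gamma,G)|$ minus the number of reducible homomorphisms, and then pass to isomorphism classes by dividing by the $GL_2(\mathbb{F}_q)$-conjugation orbit size $|GL_2(\mathbb{F}_q)|/(q-1)=q^3-q$, which is legitimate because an irreducible representation has stabilizer exactly the scalars by Schur's lemma (the same orbit size applies to $SL_2$-valued irreducibles, since isomorphism of representations is $GL_2$-conjugacy). The total $|Hom(\Gamma,G)|$ is given by the Frobenius--Mednykh formula $|Hom(\pi_1\Sigma_g,G)|=|G|^{2g-1}\sum_{\chi\in\mathrm{Irr}(G)}\chi(1)^{-(2g-2)}$, which I evaluate from the known character tables: for $GL_2(\mathbb{F}_q)$ the degrees are $1,q,q+1,q-1$ with multiplicities $q-1,\,q-1,\,\binom{q-1}{2},\,\tfrac12 q(q-1)$, and for $SL_2(\mathbb{F}_q)$ with $q$ odd the degrees are $1,q,q+1,q-1,\tfrac{q+1}{2},\tfrac{q-1}{2}$ with the standard multiplicities. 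The number of reducible homomorphisms I would obtain by counting pairs $(\rho,L)$ of a homomorphism with an invariant line and correcting by inclusion--exclusion for the number of invariant lines (two for a split representation with distinct characters, all $q+1$ for a scalar representation, the scalar count being $M$ resp.\ $2^{2g}$); fixing a line identifies these with homomorphisms into the Borel subgroup $B$, so everything reduces to the single quantity $|Hom(\Gamma,B)|$.

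The main obstacle is precisely this reducible count, and in particular $|Hom(\Gamma,B)|$. Using the extension $1\to U\to B\to T\to 1$, the number of lifts of a fixed diagonal character is governed by a cocycle (group cohomology) count on the surface group, whose outcome depends on whether the torus acts trivially or nontrivially on the unipotent radical $U\cong\mathbb{F}_q$: for $GL_2$ the action is through $\chi_1\chi_2^{-1}$, so the degenerate case is $\chi_1=\chi_2$, while for $SL_2$ the action is through $\chi^2$, so the degenerate case is $\chi^2=1$ --- again singling out the $2^{2g}$ two-torsion characters. Propagating this dichotomy through the cocycle count and the inclusion--exclusion is what generates the lower-order contributions $q(q^2-q)^{2g-2}$ and $q^{2g-2}$, the factors $(q^2\pm q)^{2g-2}=\big(q(q\pm1)\big)^{2g-2}$, and the elliptic terms $\tfrac12\big((q+1)^{2g-1}+(q-1)^{2g-1}\big)$, whereas the Frobenius evaluation is essentially mechanical. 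Finally I would assemble the three counts, running $GL_2$ and $SL_2$ in parallel (the overall factor $(q-1)^{2g}$ in $A_{GL_2}(q)$ reflecting the determinant twist by the $(q-1)^{2g}$ characters, modified by the same two-torsion correction that breaks exact proportionality with $A_{SL_2}(q)$), and simplify to the stated closed forms, checking that the result is independent of $q$ subject to the congruence condition of Theorem~\ref{thmcounting}.
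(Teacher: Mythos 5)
Your overall architecture matches the paper's: reduce via Theorem \ref{thmcounting} to counting reductive representations over $\mathbb{F}_q$, split off the split semisimple classes (your counts $\tfrac12 M(M+1)$ and $\tfrac12\bigl((q-1)^{2g}+2^{2g}\bigr)$ agree with the paper's strata $\mathcal{X}_2^{(3)},\mathcal{X}_2^{(4)}$), evaluate $|Hom(\Gamma,G)|$ by the Frobenius formula and the character tables, and count reducible homomorphisms cohomologically via the Borel subgroup. But there is a genuine gap in your treatment of the irreducible stratum: over a finite field, Schur's lemma does \emph{not} say that an irreducible representation has stabiliser exactly the scalars. It says $\mathrm{End}_\Gamma(\rho)$ is a finite division algebra, hence by Wedderburn a field $\mathbb{F}_{q^d}$, and $d=2$ actually occurs: a character $A\colon\Gamma\to\mathbb{F}_{q^2}^\times$ with $\sigma(A)\not\cong A$, viewed inside $GL_2(\mathbb{F}_q)$ via $\mathbb{F}_{q^2}^\times\hookrightarrow GL_2(\mathbb{F}_q)$, is irreducible over $\mathbb{F}_q$ (it has no $\mathbb{F}_q$-rational invariant line, so it survives your ``total minus reducible'' count) but is not absolutely irreducible, and its centraliser is $\mathbb{F}_{q^2}^\times$, of order $q^2-1$. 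Its conjugation orbit therefore has size $q^2-q$, not $q^3-q$, so dividing the full irreducible homomorphism count by $q^3-q$ undercounts each such class by a factor of $q+1$.

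This is not a negligible correction: for $\Gamma=\pi_1(\Sigma_g)$ there are $\bigl((q^2-1)^{2g}-(q-1)^{2g}\bigr)/2$ such classes in the $GL_2$ case and $\bigl((q+1)^{2g}-2^{2g}\bigr)/2$ in the $SL_2$ case (trivial determinant $A\,\sigma(A)=1$ forces $A$ to land in $\mu_{q+1}$), and they are precisely the source of the $\tfrac{q}{2}(q+1)^{2g-1}$-type terms in the stated formulas: in the paper this is the stratum $\mathcal{X}_2^{(2)}$, entering with stabiliser $s_2=q+1$ in $PGL_2(\mathbb{F}_q)$ and producing $\bigl(1-\tfrac{1}{q+1}\bigr)\tfrac{(q^2-1)^{2g}}{2}=\tfrac{q}{2}(q+1)^{2g-1}(q-1)^{2g}$. (Relatedly, your attribution of the terms $\tfrac12\bigl((q+1)^{2g-1}+(q-1)^{2g-1}\bigr)$ to the Borel/cocycle inclusion--exclusion is off; they come from this Galois-pair stratum together with the split semisimple strata, while the reducible count only supplies the negative terms $-q(q^2-q)^{2g-2}$ and $-q^{2g-2}$.) So your assembly as written would yield the wrong polynomial. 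The fix is exactly the paper's handling of $\mathcal{X}_2^{(2)}$: enumerate these Galois-pair representations separately, remove their $(q^2-q)$-sized orbits from the irreducible homomorphism count before dividing by $q^3-q$, and add their class count back in. With that repair, the rest of your plan (Borel subgroup with the cocycle dichotomy at $\chi_1=\chi_2$, resp.\ $\chi^2=1$, plus inclusion--exclusion over invariant lines) is sound and is essentially the paper's route, merely organised around homomorphism counts rather than the paper's direct enumeration of isomorphism classes of non-split extensions weighted by their stabilisers.
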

The $E$-polynomial for $SL_2(\mathbb{C})$ was first computed for the cases $g=1,2$ in \cite{lomune}, $g=3$ in \cite{mamu1} and $g > 3$ in \cite{mamu2}. We note that the computation in \cite{mamu2} for the $g>3$ case uses the $g=1,2,3$ cases as inputs, so the complete result depends on all three of these papers. Our computation, based on entirely different methods turns out to be substantially simpler. To the best of our knowledge the $E$-polynomial for the corresponding $GL_2(\mathbb{C})$-character varieties have not previously been computed.

\begin{theorem}[Compact non-orientable surfaces]\label{thmcnos2}
Let $\Gamma = \pi_1(\Sigma_k)$ be the fundamental group of a compact non-orientable surface $\Sigma_k$ of Euler characteristic $2-k$. Then:

\begin{equation*}
\begin{aligned}
\frac{A_{GL_2}(q)}{(q-1)^{k-1}} &= \frac{(q-1)}{2}(q^2-q)^{k-2} +\frac{(q-1)}{2}(q^2+q)^{k-2} + 2(q^3-q)^{k-2} + 2(q^2-1)^{k-2} \\
& \; \; \; \; \; q\left( (q+1)^{k-2} + 2(q-1)^{k-2} \right) -4(q-1)^{k-2}q^{k-2} -2q^{k-2}.
\end{aligned}
\end{equation*}

If $k$ is even, then:
\begin{equation*}
\begin{aligned}
A_{SL_2}(q) &= (q^3-q)^{k-2} + (q^2-1)^{k-2} + \left( \frac{(q-1)}{2} + 2^{k-1} \right)\left( (q^2+q)^{k-2} + (q^2-q)^{k-2} \right) \\
& \; \; \;\; \; -3(q^2-q)^{k-2} - 2^k q^{k-2} +q\left( (q+1)^{k-2} + (q-1)^{k-2} \right).
\end{aligned}
\end{equation*}

If $k$ is odd, then:
\begin{equation*}
\begin{aligned}
A_{SL_2}(q) &= (q^3-q)^{k-2} + (q^2-1)^{k-2} -2^{k-1}(q^2+q)^{k-2} + (2^{k-1}-3)(q^2-q)^{k-2} \\
& \; \; \; \; \;  + q \left( (q+1)^{k-2} + (q-1)^{k-2} \right).
\end{aligned}
\end{equation*}
\end{theorem}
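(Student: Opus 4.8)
The plan is to invoke Theorem~\ref{thmcounting}, which reduces the problem to counting, for all $q \equiv 1 \pmod{N}$, the isomorphism classes of $2$-dimensional reductive representations of $\Gamma = \pi_1(\Sigma_k)$ valued in $GL_2(\mathbb{F}_q)$ for $A_{GL_2}$, and in $SL_2(\mathbb{F}_q)$ for $A_{SL_2}$ (a representation with trivial determinant being the same as an $SL_2$-valued one). I will use the standard presentation
\[
\Gamma = \langle a_1, \dots, a_k \mid a_1^2 a_2^2 \cdots a_k^2 = 1 \rangle ,
\]
so that a representation into a finite group $G$ is a tuple $(A_1, \dots, A_k) \in G^k$ with $A_1^2 \cdots A_k^2 = I$. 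Since a $2$-dimensional reductive representation is either irreducible or a direct sum of two characters, the strategy is: (i) count all homomorphisms $\Gamma \to G$; (ii) count the reducible ones; (iii) subtract and divide by the conjugacy orbit size to extract the irreducible isomorphism classes; and (iv) add the decomposable classes.

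For step (i) I will use the Frobenius--Schur/Mednykh formula for non-orientable surface groups. Writing $\nu(\chi)$ for the Frobenius--Schur indicator of an irreducible character $\chi$, the classical identity $\#\{h \in G : h^2 = g\} = \sum_{\chi} \nu(\chi)\chi(g)$ combined with the convolution rule for irreducible characters yields
\[
\#\mathrm{Hom}(\Gamma, G) = |G|^{k-1} \sum_{\chi \in \mathrm{Irr}(G)} \frac{\nu(\chi)^{k}}{\chi(1)^{k-2}} .
\]
I will evaluate this using the classical character tables of $GL_2(\mathbb{F}_q)$ and $SL_2(\mathbb{F}_q)$, inserting the degrees (which take values among $1, q-1, q, q+1$, with their standard multiplicities) together with the indicators. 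The congruence $q \equiv 1 \pmod{N}$ ensures that degrees, multiplicities and indicators behave uniformly, making the sum polynomial in $q$. Crucially, the even/odd dichotomy in $A_{SL_2}(q)$ originates here: $SL_2(\mathbb{F}_q)$ has irreducible (cuspidal) characters of symplectic type with $\nu(\chi) = -1$, so $\nu(\chi)^k$ depends on the parity of $k$; by contrast every irreducible character of $GL_2(\mathbb{F}_q)$ has indicator in $\{0, 1\}$, which is why $A_{GL_2}(q)$ is a single polynomial.

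For steps (ii)--(iv) I will put each reducible representation in upper-triangular form, with diagonal an ordered pair of characters $\chi_1, \chi_2 \colon \Gamma \to \mathbb{F}_q^*$ factoring through $H_1(\Sigma_k) \cong \mathbb{Z}^{k-1} \oplus \mathbb{Z}/2$, and strictly-upper-triangular entry ranging over an affine space whose dimension jumps along the coincidence locus $\chi_1 = \chi_2$ (for $SL_2$, $\chi_2 = \chi_1^{-1}$, so the locus consists of the self-dual, i.e.\ sign, characters). Subtracting the reducible count from step (i) and dividing the irreducible homomorphisms by the common orbit size $|GL_2(\mathbb{F}_q)|/(q-1) = q^3 - q$ (the stabilizer being the scalars, by Schur's lemma) produces the irreducible classes; the decomposable classes are then counted as unordered pairs of characters, with the $\mathbb{Z}/2$ summand and the relation $\big(\prod_i \chi(a_i)\big)^2 = 1$ supplying the $2$-torsion. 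This $2$-torsion is the source of the powers $2^{k-1}$ and $2^k$: there are $2^k$ sign characters (elements of $\mathrm{Hom}(\Gamma, \{\pm 1\})$), and these enter both the enlarged off-diagonal contributions along the coincidence locus and the symmetrization of decomposable pairs.

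The main obstacle will be the exact bookkeeping in two places. First, one must assemble the Frobenius--Schur indicators across every character family of $SL_2(\mathbb{F}_q)$ --- tracking their dependence on the central value at $-I$ and on $q \bmod N$ --- precisely enough to reproduce the parity-dependent coefficients. Second, the passage from homomorphisms to isomorphism classes is a delicate inclusion--exclusion: the coincidence locus $\chi_1 = \chi_2$, the $\pm$-ambiguity forced by the $\mathbb{Z}/2$ in $H_1$, and the symmetrization of decomposable pairs must all be handled without over- or under-counting, which is exactly what pins down the $2^{k-1}$ and $2^k$ terms. Once these are settled, the contributions collect into the stated polynomials in $q$, and Theorem~\ref{thmcounting} identifies them with the required $E$-polynomials.
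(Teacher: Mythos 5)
Your overall architecture coincides with the paper's: the Frobenius--Schur/Mednykh formula (\ref{equhomcnos}) evaluated on the $GL_2(\mathbb{F}_q)$ and $SL_2(\mathbb{F}_q)$ character tables for the total homomorphism count, then a stratified count of the non-irreducible representations assembled by orbit--stabiliser, which is exactly the content of Theorem \ref{thmcountingredrank2}. However, two of your concrete steps fail as stated. First, in step (iii) you divide \emph{all} irreducible homomorphisms by the single orbit size $|PGL_2(\mathbb{F}_q)|=q^3-q$, ``the stabiliser being the scalars, by Schur's lemma.'' Over a non-algebraically-closed field Schur's lemma only yields a division algebra: a representation that is irreducible over $\mathbb{F}_q$ but not absolutely irreducible (precisely those becoming $A\oplus\sigma(A)$ over $\overline{\mathbb{F}}_q$ for a character $A\colon\Gamma\to\mathbb{F}_{q^2}^\times$ with $\sigma(A)\ncong A$) has endomorphism algebra $\mathbb{F}_{q^2}$, hence stabiliser of order $q+1$ in $PGL_2(\mathbb{F}_q)$, not $1$. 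Your dichotomy ``irreducible, or a sum of two $\mathbb{F}_q^\times$-valued characters'' never isolates this stratum (the paper's $\mathcal{X}_2^{(2)}$, of size $(m_{q^2-1}-m_{q-1})/2$, counted via characters into $\mathbb{F}_{q^2}^\times$, resp.\ into $\mu_{q+1}$ in the $SL_2$ case), and it is also absent from your step (iv), so its isomorphism classes are never added back. For $\pi_1(\Sigma_k)$ this stratum has size of order $(q^2-1)^{k-1}$ and is responsible for the $q(q+1)^{k-2}$-type terms in the stated formulas, so the error is not lower order: executed as written, your count comes out wrong.

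Second, your reducible bookkeeping asserts that the relevant cohomological dimension ``jumps along the coincidence locus $\chi_1=\chi_2$.'' For a \emph{non-orientable} surface group Poincar\'e duality carries twisted coefficients, and the paper's lemma gives $\dim H^1(\Gamma,A)=k-2$ for $A\neq 1, A_{\rm orn}$ but $\dim H^1(\Gamma,A)=k-1$ both for $A=1$ \emph{and} for the orientation character $A_{\rm orn}$. Omitting the locus $\chi_1\chi_2^{-1}=A_{\rm orn}$ miscounts the extension strata. It also undercuts your claim that the even/odd dichotomy in $A_{SL_2}$ ``originates'' solely in the Frobenius--Schur indicators: a second, equally essential source is the lemma that $A_{\rm orn}$ admits a square root $L$ (the relevant condition in the $SL_2$ strata, where the twisting character is $A^2$) if and only if $k$ is even, in which case there are exactly $2^k$ such $L$; this is precisely where the parity-dependent term $-2^k q^{k-2}$ in the even-$k$ formula comes from, via $[k-1]_q-[k-2]_q=q^{k-2}$. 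Both points must be repaired before the bookkeeping you defer can close up to the stated polynomials.
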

These results are new, except for the $SL_2(\mathbb{C})$ $E$-polynomials for $k=2,3$, which were calculated in \cite{mart} by different methods. Setting $q=1$ in these expressions gives the topological Euler characteristic of the corresponding complex character variety. Thus we obtain:
\begin{theorem}
For $k>2$, the Euler characteristic of $Rep( \pi_1(\Sigma_k) , SL_2(\mathbb{C}))$ is $2^{2k-3} -3 \cdot 2^{k-2}$ if $k$ is even, $-2^{2k-3} + 2^{k-2}$ if $k$ is odd.
\end{theorem}

\begin{theorem}[Torus knots]
Let $a,b$ be coprime integers and $K \subset S^3$ an $(a,b)$-torus knot. Let $\Gamma = \pi_1(S^3 \setminus K)$ be the fundamental group of the complement of $K$ in $S^3$.\\

If $a,b$ are odd, then:
\begin{equation*}
A_{GL_2}(q) = (q-1)\left( q + (a-1)(b-1)\frac{(q-2)}{4} \right).
\end{equation*}

If $a$ is even and $b$ is odd, then:
\begin{equation*}
A_{GL_2}(q) = (q-1)\left( q + (b-1)\frac{(aq -3a+4)}{4} \right).
\end{equation*}

In the $SL_2$ case, we have:
\begin{equation*}
A_{SL_2}(q) = q + \frac{1}{2}(a-1)(b-1)(q-2).
\end{equation*}

\end{theorem}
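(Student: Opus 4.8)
The plan is to use the standard presentation $\Gamma = \pi_1(S^3\setminus K) = \langle u, v \mid u^a = v^b\rangle$, whose abelianization is $\mathbb{Z}$ and whose centre is generated by the element $u^a = v^b$. By Theorem \ref{thmcounting} it suffices, after fixing $N$ (I would take $N = 2ab$, so that $a,b,2a,2b$ all divide $q-1$), to count reductive representations $\Gamma \to GL_2(\mathbb{F}_q)$ and $\Gamma\to SL_2(\mathbb{F}_q)$ up to isomorphism for $q\equiv 1\ (\mathrm{mod}\ N)$ and to exhibit the answer as a polynomial in $q$. I would split this count into the absolutely irreducible representations and the reducible (hence abelian, after semisimplification) ones, and treat the two contributions separately.

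For the reducible part, since $\Gamma^{ab}\cong\mathbb{Z}$ we have $\mathrm{Hom}(\Gamma,\mathbb{G}_m)\cong\mathbb{G}_m$, and a semisimple reducible representation is an unordered pair of characters $\{\chi_1,\chi_2\}$ (with $\chi_1\chi_2=1$ in the $SL_2$ case). Thus the reducible locus of the character variety is $\mathrm{Sym}^2(\mathbb{G}_m)\cong\mathbb{A}^1\times\mathbb{G}_m$ for $GL_2$ and $\mathbb{G}_m/(z\sim z^{-1})\cong\mathbb{A}^1$ for $SL_2$, contributing $q(q-1)$ and $q$ points respectively. Here it is essential to count $\mathbb{F}_q$-points of these quotients rather than naive orbits of characters defined over $\mathbb{F}_q$: a representation irreducible over $\mathbb{F}_q$ but splitting as $\chi\oplus\chi^q$ over $\mathbb{F}_{q^2}$ is reductive and must be included, and it is exactly these Galois-twisted points that make the counts $q$ and $q(q-1)$ rather than the smaller naive orbit numbers.

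For the absolutely irreducible part, Schur's lemma forces the central element to a scalar $\lambda I$, so $U := \rho(u)$ and $V := \rho(v)$ satisfy $U^a = V^b = \lambda I$ with $U,V$ non-central. I would fix $\lambda\in\mathbb{F}_q^*$ and the conjugacy classes of $U$ and $V$ (equivalently their characteristic polynomials, whose roots are $a$-th and $b$-th roots of $\lambda$), and count the absolutely irreducible $(U,V)$ with these invariants. Since a two-generator representation into $GL_2$ is determined up to conjugacy by $(\mathrm{tr}\,U,\det U,\mathrm{tr}\,V,\det V,\mathrm{tr}\,UV)$, the only remaining freedom is $z=\mathrm{tr}(UV)\in\mathbb{F}_q$; the representation becomes reducible over $\overline{\mathbb{F}}_q$ exactly for the two values $z_\pm$ obtained by pairing eigenvalues, so the number of absolutely irreducible classes is $q-\#(\{z_+,z_-\}\cap\mathbb{F}_q)$. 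The decisive dichotomy is whether $U$ (or $V$) is split or elliptic: when both are split all eigenvalues lie in $\mathbb{F}_q$ and $z_\pm\in\mathbb{F}_q$ are distinct, giving $q-2$; when an elliptic class occurs, $z_+$ and $z_-$ are Galois-conjugate and lie outside $\mathbb{F}_q$, giving $q$. Summing over $\lambda$ and over conjugacy classes, organised by type, then produces the irreducible contribution.

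The availability of elliptic classes is what controls the case division. For $SL_2$ the determinant constraint together with $2a\mid q-1$ forces every admissible $U,V$ to be split, so only the uniform $q-2$ count occurs and one obtains $A_{SL_2}(q)=q+\tfrac12(a-1)(b-1)(q-2)$ in all cases. For $GL_2$ an element with $U^a=\lambda I$ is elliptic precisely when $\lambda$ has order two in $\mathbb{F}_q^*/(\mathbb{F}_q^*)^a\cong\mathbb{Z}/a$, which happens for some $\lambda$ iff $a$ is even; this is why the all-odd case and the even case genuinely differ. The main obstacle is therefore the elliptic bookkeeping: for each relevant $\lambda$ one must count the non-split conjugacy classes of $U$ (the $a$-th roots of $\lambda$ fall into Frobenius orbits of size two), verify via a trace-surjectivity argument that each elliptic class contributes the full $q$ rather than $q-2$, and weight the split and elliptic contributions by the sizes $\tfrac{q-1}{ab}$, $\tfrac{q-1}{a}$, etc.\ of the relevant coset intersections, computed by the Chinese Remainder Theorem, before assembling everything into the stated closed forms. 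Getting these elliptic weights exactly right is the delicate step, and it is here that the parity-dependent corrections in the $GL_2$ formulae arise; setting $q=1$ at the end then recovers the Euler characteristics.
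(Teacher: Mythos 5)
Your route is genuinely different from the paper's. The paper never parametrizes representations directly: it computes $|Hom(\Gamma , G)|$ for $G = GL_2(\mathbb{F}_q)$ via the Frobenius--Schur formula $|Hom(\Gamma,G)|/|G| = \sum_\chi \nu_a(\chi)\nu_b(\chi)$ and then extracts the reductive count from the general stratification formula (Theorem \ref{thmcountingredrank2}) together with the two lemmas computing $m_{q-1}$ and $b^1_A$. You instead decompose the character variety itself: the reducible locus as $\mathbb{F}_q$-points of $Sym^2(\mathbb{G}_m)$ (resp. $\mathbb{G}_m/(z \sim z^{-1})$), whose counts $q(q-1)$ and $q$ agree exactly with the paper's strata $\mathcal{X}_2^{(2)} \cup \mathcal{X}_2^{(3)} \cup \mathcal{X}_2^{(4)}$, and the absolutely irreducible locus via $\rho(x)^a=\rho(y)^b=\lambda I$, conjugacy classes of $U=\rho(x)$, $V=\rho(y)$, and the trace coordinate $z = tr(UV)$. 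Your key counts are correct, and your ``delicate'' elliptic step can be made painless: every pair in (elliptic class) $\times$ (split class) is automatically absolutely irreducible, $PGL_2(\mathbb{F}_q)$ acts freely on such pairs, and the number of classes is $|C_1||C_2|/|PGL_2(\mathbb{F}_q)| = q(q-1)\cdot q(q+1)/(q^3-q) = q$, with the split--split count $q-2$ obtained the same way. Your elliptic criterion ($\lambda$ of order $2$ in $\mathbb{F}_q^\times/(\mathbb{F}_q^\times)^a$, possible only for $a$ even) is also correct. Carried out, your method reproduces the stated $A_{SL_2}(q)$ and the stated $A_{GL_2}(q)$ for $a,b$ odd.

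There is, however, a concrete problem with your final step ``assembling everything into the stated closed forms'': for $a$ even, $b$ odd your bookkeeping does \emph{not} yield the stated formula. There are $(q-1)/ab$ values of $\lambda$ in $(\mathbb{F}_q^\times)^a \cap (\mathbb{F}_q^\times)^b$, each giving $\binom{a}{2}\binom{b}{2}$ split--split pairs of weight $q-2$, and $(q-1)/ab$ values in $\bigl((\mathbb{F}_q^\times)^{a/2}\setminus(\mathbb{F}_q^\times)^a\bigr)\cap(\mathbb{F}_q^\times)^b$, each giving $\frac{a}{2}\binom{b}{2}$ elliptic--split pairs of weight $q$. Hence the absolutely irreducible count is $\frac{(q-1)(b-1)}{4}\bigl[(a-1)(q-2)+q\bigr]$ and
\begin{equation*}
A_{GL_2}(q) = (q-1)\left( q + (b-1)\frac{aq-2a+2}{4} \right),
\end{equation*}
which agrees with the stated $(q-1)\bigl(q+(b-1)\tfrac{aq-3a+4}{4}\bigr)$ only when $a=2$; the difference is $\tfrac{1}{4}(q-1)(b-1)(a-2)$.

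The evidence is that this is an error in the stated formula (i.e., in the paper's evaluation of $\sum_\chi \nu_a(\chi)\nu_b(\chi)$ for even $a$), not in your method. First, re-evaluating that character sum using the paper's own Frobenius--Schur table gives $1 + \tfrac{(b-1)(aq+2a-2)}{4}$, not the claimed $1+\tfrac{a(b-1)(q+1)}{4}$: the claimed value is short by exactly $\tfrac{(b-1)(a-2)}{4}$, which propagates to the theorem. Second, a direct check at $(a,b)=(4,3)$, $q=13$: enumerating pairs $(U,V)$ with $U^4=V^3$ by the common value $W=U^4=V^3$ (only $W$ whose eigenvalues lie in $(\mathbb{F}_{q^2}^\times)^{12}$ contribute) gives $|Hom(\Gamma, GL_2(\mathbb{F}_{13}))| = 786240 = 30\,|GL_2(\mathbb{F}_{13})|$, whereas the paper's sum evaluates to $29$; correspondingly the number of reductive classes is $13\cdot 12 + 276 = 432$, which is your formula's value, versus $420$ from the stated one. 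So you should carry your computation through and present the even case as a corrected formula, with this cross-check against the stratification identity $A_{GL_2}(q) = \tfrac{|Hom|}{|PGL_2(\mathbb{F}_q)|} + (q-1)\bigl(q-1-(a-1)(b-1)\bigr)$, rather than trying to force agreement with the statement.
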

The $E$-polynomial for these $SL_2(\mathbb{C})$-character varieties follows from \cite{mun}, where the structure of the character variety is completely described, however the $GL_2(\mathbb{C})$ case appears to be new.

\begin{theorem}[Compact oriented surfaces, $n=3$]\label{thmcos3}
Let $\Gamma = \pi_1(\Sigma_g)$ be the fundamental group of a compact oriented surface $\Sigma_g$ of genus $g$. Then:

\begin{equation*}
\begin{aligned}
\frac{A_{GL_3}(q)}{(q-1)^{2g}} &= \frac{(q^2+q)}{3} \left( q^5+q^4+q^3 \right)^{2g-2} + \frac{(q^2-q)}{2} \left( q^5-q^3 \right)^{2g-2} \\
& \; \; \; \; \; + \left( q^8-q^6-q^5+q^3 \right)^{2g-2} + \left(  q^6-q^5-q^3+q^2 \right)^{2g-2} \\
& \; \; \; \; \; + \left( q^5-q^3-q^2+1 \right)^{2g-2} + (q-2)\left( q^6-q^5-q^4+q^3 \right)^{2g-2} \\
& \; \; \; \; \; + (q-2)\left( q^5-q^4-q^3+q^2 \right)^{2g-2} + \frac{(q-2)(q-3)}{6}\left( q^5-2q^4+q^3 \right)^{2g-2} \\
& \; \; \; \; \; + \left( q-2q^{4g-4} \right)\left(\frac{(q-2)}{2}(q^2-q)^{2g-2}(q-1)^{2g-1} + \frac{q}{2}(q^2+q)^{2g-2}(q-1)^{2g-1}\right) \\
& \; \; \; \; \; + \left( q-2q^{4g-4} \right)\left((q^3-q)^{2g-2}(q-1)^{2g-1} + (q^2-1)^{2g-2}(q-1)^{2g-1}\right) \\
& \; \; \; \; \; + \frac{1}{3}(q^2+q)(q^2+q+1)^{2g-1} +\frac{1}{2}(q^2-q)(q^2-1)^{2g-1} -q^{6g-6} \\
& \; \; \; \; \; + \left( \frac{(q^2+q)}{6} + q^{6g-6}-q^{2g-1} \right) (q-1)^{4g-2} \\
& \; \; \; \; \; +\left( q^{4g-6} - q^{2g-2} -q^{2g-4}\right)(q-1)^{2g} + (q^{2g-2}-1)(q^{2g-4}+q^{2g-1}-2) (q-1)^{2g-2}.
\end{aligned}
\end{equation*}

\begin{equation*}
\begin{aligned}
A_{SL_3}(q) &= \left( 2 \cdot 3^{2g-1} + \frac{(q-1)(q+2)}{3}\right) \left( q^5+q^4+q^3 \right)^{2g-2} + \frac{(q^2-q)}{2} \left( q^5-q^3 \right)^{2g-2} \\
& \; \; \; \; \; + \left( q^8-q^6-q^5+q^3 \right)^{2g-2} + \left(  q^6-q^5-q^3+q^2 \right)^{2g-2} \\
& \; \; \; \; \; + \left( q^5-q^3-q^2+1 \right)^{2g-2} + (q-2)\left( q^6-q^5-q^4+q^3 \right)^{2g-2} \\
& \; \; \; \; \; + (q-2)\left( q^5-q^4-q^3+q^2 \right)^{2g-2} + \left( 3^{2g-1} + \frac{(q-1)(q-4)}{6} \right) \left( q^5-2q^4+q^3 \right)^{2g-2} \\
& \; \; \; \; \; + (q-2q^{4g-4})\left( \frac{(q-2)}{2}(q^2-q)^{2g-2}(q-1)^{2g-1} + \frac{q}{2}(q^2+q)^{2g-2}(q-1)^{2g-1}\right) \\
& \; \; \; \; \; + (q-2q^{4g-4})\left( (q^3-q)^{2g-2}(q-1)^{2g-1} + (q^2-1)^{2g-2}(q-1)^{2g-1} \right) \\
& \; \; \; \; \; + \frac{1}{3}(q^2+q)(q^2+q+1)^{2g-1} +\frac{1}{2}(q^2-q)(q^2-1)^{2g-1} -q^{6g-6} 3^{2g} \\
& \; \; \; \; \; + \left( \frac{(q^2+q)}{6} + q^{6g-6}-q^{2g-1} \right) (q-1)^{4g-2} \\
& \; \; \; \; \; +\left( q^{4g-6} - q^{2g-2} -q^{2g-4}\right)(q-1)^{2g} + (q^{2g-2}-1)(q^{2g-4}+q^{2g-1}-2) (q-1)^{2g-2}.
\end{aligned}
\end{equation*}
\end{theorem}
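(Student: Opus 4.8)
By Theorem \ref{thmcounting}, it suffices to exhibit, for $q \equiv 1 \pmod N$ with $N$ suitably chosen, a polynomial counting the isomorphism classes of $3$-dimensional reductive representations of $\Gamma = \pi_1(\Sigma_g)$ over $\mathbb{F}_q$ (respectively those with trivial determinant). The plan is to execute the scheme outlined after Theorem \ref{thmcounting}: count all homomorphisms, subtract the reducible ones to isolate the irreducibles, then reassemble the reductive (semisimple) representations. The first step is the Frobenius mass formula: for any finite group $G$,
\[ |Hom(\pi_1(\Sigma_g), G)| = |G|^{2g-1}\sum_{\chi \in \mathrm{Irr}(G)} \chi(1)^{2-2g}. \]
Applying this to $G = GL_3(\mathbb{F}_q)$ and $G = SL_3(\mathbb{F}_q)$ reduces $|Hom|$ to a sum over the known character tables of Green and Steinberg, grouped by degree. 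The irreducible degrees occur in a handful of families (central type, giving $1,\ q(q+1),\ q^3$; Levi type $GL_1\times GL_2$, giving $q^2+q+1$ and $q(q^2+q+1)$; split torus, giving $(q+1)(q^2+q+1)$; and the $\mathbb{F}_{q^2}$- and $\mathbb{F}_{q^3}$-cuspidal families, giving $q^3-1$ and $(q-1)^2(q+1)$), and for $q\equiv 1\pmod N$ both each degree $d$ and its multiplicity are polynomials in $q$. Hence $|Hom|$ is an explicit polynomial, and the factors $(q^5+q^4+q^3)^{2g-2}$, $(q^8-q^6-q^5+q^3)^{2g-2}$, etc.\ appearing in the statement are exactly the $(|G|/d)^{2g-2}$ produced this way.

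Next I would count the reducible homomorphisms. A representation is reducible precisely when its image lies in a proper parabolic, and for $n=3$ these are the stabilizers $P_1$ of a line and $P_2$ of a plane, with Levi factors $GL_1\times GL_2$ and $GL_2\times GL_1$. I would count $Hom(\Gamma,P_i)$ through the projection to the Levi $L_i$, whose fibres over a given $\rho\colon\Gamma\to L_i$ are governed by the $\Gamma$-module cohomology of the (abelian) unipotent radical $\mathrm{rad}_u P_i$ twisted by $\rho$; since the Euler characteristic of $\Sigma_g$ controls $|H^1|/|H^0|$, this factors $|Hom(\Gamma,P_i)|$ in terms of the $n=1$ count $(q-1)^{2g}$ and the two-dimensional data supplied by Theorem \ref{thmcos2}. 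To pass from ``homomorphisms into some parabolic'' to ``reducible homomorphisms'' without over-counting, I would stratify by the isomorphism type of the semisimplification (types $3$, $2+1$, $1+1+1$) and perform an inclusion--exclusion/M\"obius inversion over the poset of flag types, which correctly weights representations fixing several invariant subspaces. Subtracting the reducibles from the total yields the number of irreducible homomorphisms, and dividing by the orbit size $|PGL_3(\mathbb{F}_q)|$ (respectively $|SL_3(\mathbb{F}_q)|/\gcd(3,q-1)$) — the centralizer of an absolutely irreducible representation being the centre, by Schur — gives the count of irreducible isomorphism classes.

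To assemble the reductive total I would add to the irreducible classes the properly semisimple ones: type $2+1$, an irreducible $2$-dimensional summand together with a character, and type $1+1+1$, an unordered triple of characters, with the usual symmetric-group corrections for repeated summands. For $SL_3$ the determinants of the blocks are constrained to multiply to $1$, which couples the summands and must be summed over compatible determinant twists. A recurring feature, visible in the powers of $3$ (such as $2\cdot 3^{2g-1}$, $3^{2g-1}$, and $q^{6g-6}3^{2g}$) in the $SL_3$ formula, is the action of $H^1(\Sigma_g,\mathbb{Z}/3)\cong(\mathbb{Z}/3)^{2g}$ of central twists, a group of order $3^{2g}$; the fixed-point combinatorics of representations isomorphic to their twists governs these terms, exactly as the torsion in $H^1$ governs the $SL_n$ and $PGL_n$ counts of \cite{mer}.

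The main obstacle is the $SL_3(\mathbb{F}_q)$ bookkeeping. Unlike $GL_3$, its character table is not uniform: when $3\mid q-1$ the centre is $\mathbb{Z}/3$, and certain irreducible characters of $GL_3$ restrict reducibly to $SL_3$, each splitting into three. This is precisely why the hypothesis $q\equiv 1\pmod N$ (with $3\mid N$, and enough further divisibility to guarantee the needed roots of unity) is imposed, so that the degrees and multiplicities, the parabolic counts, and the central-twist fixed-point combinatorics are all simultaneously polynomial in $q$ and the answer stabilizes to a single polynomial. Making the fixed-determinant constraint, the character splitting, and the M\"obius inversion over subspace types cohere — and verifying that the many contributions collapse to the closed forms displayed — is where essentially all of the work lies; the $GL_3$ computation is the same, but without the central splitting, which explains the close parallel between the two formulas.
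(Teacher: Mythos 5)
Your overall skeleton is the same as the paper's: invoke Theorem \ref{thmcounting}, compute $|Hom(\Gamma,G)|$ by the Frobenius formula and the character theory of $GL_3(\mathbb{F}_q)$ and $SL_3(\mathbb{F}_q)$ (the paper gets the $3^{2g-1}$ terms from the Lehrer/Karkar--Green restriction theory, Proposition \ref{propirrslgl}, rather than from your central-twist heuristic, but that is the same mechanism), remove the non-irreducible contributions, and reassemble the reductive classes. However, there are two concrete errors in your plan, each of which changes the final polynomial.

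First, the subtract-and-divide step is wrong as stated. After subtracting the reducible homomorphisms you divide the count of \emph{all} irreducible homomorphisms by $|PGL_3(\mathbb{F}_q)|$, citing Schur's lemma. But Schur only gives a trivial $PGL_3(\mathbb{F}_q)$-stabiliser for \emph{absolutely} irreducible representations. Over $\mathbb{F}_q$ there are representations that are irreducible but not absolutely irreducible: for $n=3$ these become $A \oplus \sigma(A) \oplus \sigma^2(A)$ over $\overline{\mathbb{F}}_q$, their endomorphism algebra is the field $\mathbb{F}_{q^3}$, and their stabiliser in $PGL_3(\mathbb{F}_q)$ is cyclic of order $q^2+q+1$ (the paper's stratum $\mathcal{X}_3^{(2)}$ in Table \ref{figrank3}, with $s_2 = q^2+q+1$). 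Since there are $(m_{q^3-1}-m_{q-1})/3 = \bigl((q^3-1)^{2g}-(q-1)^{2g}\bigr)/3$ such isomorphism classes, a nonzero polynomial, your uniform division miscounts them by the factor $\bigl(1-\tfrac{1}{q^2+q+1}\bigr)$, and the same problem recurs inside your ``$2+1$'' semisimple type when the rank-$2$ summand is of the form $A \oplus \sigma(A)$ (the paper's stratum $\mathcal{X}_3^{(4)}$, stabiliser of order $q^2-1$, not $q-1$). This is exactly why the paper never subtracts and then divides: it applies the orbit--stabiliser theorem stratum by stratum and solves for the absolutely irreducible count, arriving at identity (\ref{equcounting}), in which every non-absolutely-irreducible stratum enters with weight $\bigl(1-\tfrac{1}{s_i}\bigr)$ or $-\tfrac{1}{s_i}$; the thirty strata of Section \ref{seccasegl3sl3} exist precisely to record these varying stabilisers.

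Second, your $SL_3$ normalisation counts the wrong equivalence classes. Dividing by $|SL_3(\mathbb{F}_q)|/\gcd(3,q-1)$ counts $SL_3(\mathbb{F}_q)$-conjugacy classes, but by Proposition \ref{propcompare} and Theorem \ref{thmcountingred} the $\mathbb{F}_q$-points of $Rep(\Gamma,SL_3(\mathbb{C}))$ are reductive homomorphisms $\Gamma \to SL_3(\mathbb{F}_q)$ up to conjugacy over $\overline{\mathbb{F}}_q$, which for reductive representations coincides with $GL_3(\mathbb{F}_q)$-conjugacy, i.e.\ with isomorphism of representations. When $q \equiv 1 \pmod 3$ these notions genuinely differ: if $\det(g)$ is a non-cube, then $g \rho g^{-1}$ is not $SL_3(\mathbb{F}_q)$-conjugate to an absolutely irreducible $\rho$ (the discrepancy cannot be absorbed by a scalar in $SL_3$), so each $GL_3(\mathbb{F}_q)$-orbit of absolutely irreducible representations splits into exactly $3$ orbits under $SL_3(\mathbb{F}_q)$, and your convention overcounts the absolutely irreducible locus by a factor of $3$. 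The correct divisor is $|PGL_3(\mathbb{F}_q)|$, which equals $|SL_3(\mathbb{F}_q)|$, exactly as in (\ref{equcounting}). Aside from these two points, your parabolic/M\"obius-inversion scheme for counting reducible homomorphisms is a workable (if laborious) alternative to the paper's explicit stratification, since at the level of raw homomorphism counts the extension-counting via cohomology of the unipotent radical is equivalent to the paper's $H^1(\Gamma, Hom(\cdot,\cdot))$ computations.
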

Both of these results are new. Setting $q=1$, we obtain the Euler characteristic of the $SL_3(\mathbb{C})$-character varieties:
\begin{theorem}
For $g > 1$, the Euler characteristic of $Rep(\pi_1(\Sigma_g) , SL_3(\mathbb{C}))$ is $2 \cdot 3^{4g-3} - 7 \cdot 3^{2g-2}$.
\end{theorem}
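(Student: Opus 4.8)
The plan is to read off the Euler characteristic directly from the $E$-polynomial $A_{SL_3}(q)$ computed in Theorem \ref{thmcos3} by specializing $q \to 1$. First I would invoke the general principle linking $E$-polynomials to Euler characteristics: for a complex algebraic variety $X$ one has $E(X;u,v) = \sum_{p,r}(-1)^{p+r} h^{p,r}_c(X)\, u^p v^r$, so setting $u=v=1$ produces $\sum_k (-1)^k \dim H^k_c(X) = \chi_c(X)$, the compactly supported Euler characteristic, which for a complex variety agrees with the ordinary Euler characteristic $\chi(X)$. Since the $E$-polynomial here is a polynomial in the single variable $q=uv$ (the variety being of balanced, Hodge--Tate type), the substitution $u=v=1$ is exactly evaluation at $q=1$, as already noted in the text preceding the statement. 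Thus it suffices to compute $A_{SL_3}(1)$.

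Next I would evaluate $A_{SL_3}(q)$ summand by summand at $q=1$. The key observation is that for $g>1$ almost every term carries a factor vanishing at $q=1$: either a polynomial base such as $q^5-q^3$, $q^8-q^6-q^5+q^3$, $q^6-q^5-q^3+q^2$, $q^5-q^3-q^2+1$, $q^6-q^5-q^4+q^3$, $q^5-q^4-q^3+q^2$, $q^5-2q^4+q^3 = q^3(q-1)^2$, $q^2-q$, $q^2-1$, or $q^3-q$, each raised to the positive power $2g-2$, or else an explicit factor $(q-1)^m$ with $m>0$, or a scalar prefactor such as $q^2-q$. Every such term contributes $0$. The only surviving summands are the first, $\bigl(2\cdot 3^{2g-1} + \tfrac{(q-1)(q+2)}{3}\bigr)(q^5+q^4+q^3)^{2g-2}$, which gives $2\cdot 3^{2g-1}\cdot 3^{2g-2} = 2\cdot 3^{4g-3}$; the term $\tfrac13(q^2+q)(q^2+q+1)^{2g-1}$, which gives $\tfrac13\cdot 2\cdot 3^{2g-1} = 2\cdot 3^{2g-2}$; and $-q^{6g-6}3^{2g}$, which gives $-3^{2g} = -9\cdot 3^{2g-2}$.

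Adding these three contributions yields $2\cdot 3^{4g-3} + 2\cdot 3^{2g-2} - 9\cdot 3^{2g-2} = 2\cdot 3^{4g-3} - 7\cdot 3^{2g-2}$, as claimed. I do not anticipate a genuine obstacle: the computation is mechanical once the specialization principle for $E$-polynomials is in hand. The one point requiring mild care is the bookkeeping, namely confirming that each vanishing factor appears with a \emph{strictly positive} exponent when $g>1$, so that powers like $(q-1)^{2g-2}$ and bases vanishing at $q=1$ genuinely give $0$ rather than reducing to $1$ (which is precisely what would fail at $g=1$). This is exactly why the hypothesis $g>1$ is imposed.
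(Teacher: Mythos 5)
Your proposal is correct and follows exactly the paper's approach: the paper likewise obtains the Euler characteristic by setting $q=1$ in the formula for $A_{SL_3}(q)$ from Theorem \ref{thmcos3}, using the fact (property (3) of the $E$-polynomial in Section \ref{secepoly}) that $E_X(1,1)$ is the topological Euler characteristic. Your term-by-term evaluation, with the three surviving contributions $2\cdot 3^{4g-3}$, $2\cdot 3^{2g-2}$, and $-9\cdot 3^{2g-2}$, matches the paper's result.
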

We note that the  Euler characteristic of the $GL_2(\mathbb{C})$ and $GL_3(\mathbb{C})$-character varieties vanish due to the overall factor $(q-1)$. 

\subsection{Structure of paper}
A brief outline of the paper is as follows. In \textsection \ref{secepoly} we recall the definition of the $E$-polynomial of a complex algebraic variety. We then recall the theorem of Katz from \cite{harv} which relates the $E$-polynomial to counting points of varieties over finite fields. In \textsection \ref{secfqpts} we recall the definition of the character variety $Rep(\Gamma , G(\mathbb{C}))$ for a finitely generated group $\Gamma$ and complex reductive group $G(\mathbb{C})$. We construct a spreading out of $Rep(\Gamma , G(\mathbb{C}))$ and proceed to characterise the $\mathbb{F}_q$-points for a finite field $\mathbb{F}_q$. For $G = GL_n(\mathbb{C})$ or $SL_n(\mathbb{C})$ the answer is given by Theorem \ref{thmcountingred}, which shows that the $\mathbb{F}_q$-points are given by reductive representations of $\Gamma$ over $\mathbb{F}_q$. The remainder of the paper is concerned with counting the number of reductive representations of various groups $\Gamma$ into $GL_n(\mathbb{F}_q)$ and $SL_n(\mathbb{F}_q)$ for $n = 2,3$. In \textsection \ref{seccounting} we begin by outlining our general strategy for relating the number of reductive representations to the total number of homomorphisms $\Gamma \to GL_n(\mathbb{F}_q)$ or $SL_n(\mathbb{F}_q)$. In \ref{seccaseofgl2sl2} we carry out this strategy in the $n=2$ case for arbitrary $\Gamma$. In \ref{seccasegl3sl3} we carry out the $n=3$ case when $\Gamma$ is the fundamental group of a compact oriented surface. In \textsection \ref{seccountinghoms} we recall how character theory can be used to count the number of homomorphisms $\Gamma \to G$, where $G$ is a finite group. We work out the details of this for several different groups $\Gamma$. In \ref{sechomsgl3}, \ref{sechomssl3} we use the classification of the characters of general linear and special linear groups over finite fields to explicitly compute the number of homomorphisms $\Gamma \to GL_3(\mathbb{F}_q)$ and $SL_3(\mathbb{F}_q)$ where $\Gamma$ is the fundamental group of a compact oriented surface. In \textsection \ref{seccomputgl2sl2}, we carry out the explicit computation of the $E$-polynomial of $GL_2(\mathbb{C})$ and $SL_2(\mathbb{C})$ for various groups $\Gamma$. We conclude in \textsection \ref{seccomputgl3sl3} with the computation of the $E$-polynomial of the $GL_3(\mathbb{C})$ and $SL_3(\mathbb{C})$-character varieties for $\Gamma$ the fundamental group of a compact orientable surface.

\section{$E$-polynomials and a theorem of Katz}\label{secepoly}

Let $X$ be a complex algebraic variety. Deligne proved that the singular cohomology groups of $X$ possess a mixed Hodge structure \cite{del1,del2}. Namely, for each $i$ there is an increasing weight filtration
\begin{equation*}
0 = W_{-1} \subseteq W_0 \subseteq \dots \subseteq W_{2i} = H^i(X , \mathbb{Q} )
\end{equation*}
and a decreasing Hodge filtration\\
\begin{equation*}
H^i(X , \mathbb{C}) = F^0 \supseteq F^1 \subseteq \dots \supseteq F^m \supseteq F^{m+1} = 0
\end{equation*}
such that for each $l$, the filtration on the complexification of $Gr^W_l = W_l/W_{l-1}$ induced by $F$ is a pure Hodge structure of weight $l$. In other words, letting $Gr^{W_\mathbb{C}}_l$ denote the complexification of $Gr^W_l$ and letting $F^p Gr^{W_\mathbb{C}}_l$ denote the induced filtration on $Gr^{W_\mathbb{C}}_l$, we have that
\begin{equation*}
Gr^{W_\mathbb{C}}_l = F^p Gr^{W_\mathbb{C}}_l \oplus \overline{ F^{l-p+1} Gr^{W_\mathbb{C}}_l }
\end{equation*}
for each $0 \le p \le l$. We define the mixed Hodge numbers $h^{p,q,i}(X)$ of $X$ by:
\begin{equation*}
h^{p,q,i}(X) = dim_{\mathbb{C}} \left( F^p Gr^{W_\mathbb{C}}_i / F^{p+1} Gr^{W_\mathbb{C}}_i \right).
\end{equation*}
Similarly, the compactly supported singular cohomology groups $H^i_c(X , \mathbb{Q})$ can be shown to admit a mixed Hodge structure \cite{dakh}. Let $h^{p,q,i}_c(X)$ denote the mixed Hodge numbers for compactly supported cohomology. The {\em $E$-polynomial} or {\em Hodge-Deligne polynomial} \cite{dakh} of $X$ is the polynomial 
\begin{equation}\label{equepoly}
E_X(u,v) = \sum_{p,q,i} (-1)^i h^{p,q,i}_c(X) u^p v^q \in \mathbb{Z}[u,v].
\end{equation}
For the varieties that we are concerned with in this paper, the $E$-polynomial will turn out to be a polynomial of $uv$ alone. In this case we set $q=uv$ and denote the $E$-polynomial of $X$ by $E_X(q)$.\\

The $E$-polynomial satisfies a number of useful properties including the following:
\begin{itemize}
\item[(1)]{If $X$ is a disjoint union of finite number of locally closed subvarieties $X_i , i\in I$ then $E_X = \sum_{i \in I} E_{X_i}$ \cite[Proposition 1.6]{dakh}.}
\item[(2)]{If $X \to Y$ is a fibre bundle with fibre $F$, which is locally trivial in the Zariski topology, then $E_X = E_Y E_F$ \cite[Corollary 1.9]{dakh}.}
\item[(3)]{$E_X(1,1)$ is the topological Euler characteristic (follows from (\ref{equepoly}) and the fact that the compactly supported Euler characteristic equals the usual one, by \cite{lau}).\\}
\end{itemize}

Following Hausel and Rodriguez-Villegas \cite{harv}, we will compute the $E$-polynomial using arithmetic techniques. We recall the setup from \cite{harv}. Let $X$ be a complex algebraic variety. A {\em spreading out} of $X$ is a separated $R$-scheme of finite type $X_R$, where $R$ is a subring of $\mathbb{C}$ which is finitely generated as a $\mathbb{Z}$-algebra, such that $X = X_R \times_R \mathbb{C}$. We say that $X$ has {\em polynomial count} if there is a polynomial $P_X(t) \in \mathbb{C}[t]$ and a spreading out $X_R$ such that for every homomorphism $\phi \colon R \to \mathbb{F}_q$ to a finite field of order $q$, the number of $\mathbb{F}_q$-points of $X_R$ is $P_X(q)$. If $X$ has polynomial count it can be shown that $P_X(t) \in \mathbb{Z}[t]$ and is independent of the choice of spreading out \cite[Section 6]{harv}. The following theorem is essential to this paper.

\begin{theorem}[Katz \cite{harv}]\label{thmkatz}
Let $X$ be a complex algebraic variety. If $X$ has polynomial count with counting polynomial $P_X(t) \in \mathbb{Z}[t]$, then the $E$-polynomial $E_X(u,v)$ of $X$ is given by:
\begin{equation*}
E_X(u,v) = P_X(uv).
\end{equation*}
In particular, $E_X(u,v)$ is a polynomial of $q = uv$ alone and we write $E_X(q) = P_X(q)$.
\end{theorem}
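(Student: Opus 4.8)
The plan is to read off both the counting polynomial $P_X$ and the $E$-polynomial from a single object, namely the compactly supported cohomology of $X$, regarded on the one hand $\ell$-adically with its Frobenius action and on the other hand as a mixed Hodge structure. First I would spread out not just $X$ but also the comparison data for its cohomology: after enlarging $R$ by inverting finitely many elements, standard spreading-out arguments make the relevant comparison isomorphisms available, so that for all but finitely many residue characteristics and all homomorphisms $\phi \colon R \to \mathbb{F}_q$, the $\ell$-adic groups $H^i_c(X_{\overline{\mathbb{F}}_q}, \mathbb{Q}_\ell)$ have the same dimensions as the Betti groups $H^i_c(X(\mathbb{C}), \mathbb{Q})$, with their weight filtrations matching the weight filtration of the mixed Hodge structure used to define the $h^{p,q,i}_c(X)$.

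The arithmetic input is the Grothendieck--Lefschetz trace formula, which rewrites the polynomial-count hypothesis as
\[
P_X(q) = \#X_R(\mathbb{F}_q) = \sum_i (-1)^i \mathrm{Tr}\bigl( \mathrm{Frob}_q \mid H^i_c(X_{\overline{\mathbb{F}}_q}, \mathbb{Q}_\ell) \bigr),
\]
valid for every good reduction. Next I would invoke Deligne's weight bounds: the eigenvalues of $\mathrm{Frob}_q$ on the weight-$w$ graded piece $Gr^W_w H^i_c$ are algebraic numbers all of whose archimedean conjugates have absolute value $q^{w/2}$. Since distinct weights then live at distinct absolute-value scales, no cancellation can occur between different weights in the trace formula, and the weight-$2k$ part of the left-hand side must by itself account for the monomial $a_k q^k$ of $P_X$.

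The crux, and the step I expect to be hardest, is to turn this trace identity into purity at the level of the Hodge structure. Fixing a prime $p$ of good reduction and letting $q = p^s$ vary, the eigenvalues of $\mathrm{Frob}_q$ become $s$-th powers of those of $\mathrm{Frob}_p$, so the weight-$2k$ identity reads $\sum_i (-1)^i \mathrm{Tr}(\mathrm{Frob}_q \mid Gr^W_{2k} H^i_c) = a_k q^k$ for all $s$. The delicate point is that a trace identity a priori controls only alternating sums, so to conclude that \emph{every} occurring eigenvalue equals $q^k$ one must use that the multiset of Frobenius eigenvalues is Galois-stable and consists of Weil numbers, together with the independence of the functions $s \mapsto \omega^s$ over distinct unit-modulus $\omega$; this forces the non-Tate eigenvalues to be genuinely absent rather than merely to cancel, and in particular forces all odd-weight cohomology to vanish. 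Finally, an eigenvalue equal to $q^k$ on a weight-$2k$ piece has its Newton polygon equal to its Hodge polygon, which forces $Gr^W_{2k} H^i_c$ to be of pure Hodge type $(k,k)$, equivalently $h^{p,q,i}_c(X) = 0$ whenever $p \neq q$. Establishing this passage from a cancellation-only trace identity to genuine purity of Hodge type is the real work and is the substance of Katz's theorem.

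With this purity in hand the conclusion is immediate: only classes of Hodge type $(k,k)$ survive, and comparing with the $\ell$-adic computation above gives
\[
\sum_i (-1)^i h^{k,k,i}_c(X) = \sum_i (-1)^i \dim_{\mathbb{C}} Gr^W_{2k} H^i_c(X,\mathbb{C}) = a_k .
\]
Substituting into the definition of the $E$-polynomial yields
\[
E_X(u,v) = \sum_{p,q,i} (-1)^i h^{p,q,i}_c(X)\, u^p v^q = \sum_k a_k (uv)^k = P_X(uv),
\]
which is a polynomial in $uv$ alone, as claimed.
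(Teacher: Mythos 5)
First, a point of reference: the paper does not prove this theorem at all --- it is quoted from Katz's appendix to \cite{harv} --- so your attempt has to be measured against Katz's argument. The first half of your proposal is the correct opening and matches it: spreading out, the comparison of compactly supported Betti and $\ell$-adic cohomology with matching weight filtrations, the Grothendieck--Lefschetz trace formula, Deligne's Weil II bounds, and linear independence of the functions $s \mapsto \alpha^s$ do give, for each weight $w$, the alternating-sum identity $\sum_i (-1)^i \dim Gr^W_w H^i_c = a_{w/2}$ for $w$ even and $0$ for $w$ odd; equivalently, the point count determines $E_X$ on the diagonal $u = v$.

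The genuine gap is the passage from weights to Hodge types, i.e.\ to $\sum_i (-1)^i h^{p,q,i}_c(X) = 0$ for $p \neq q$, and both devices you propose for it fail. (i) The claim that Galois-stability of the eigenvalue multiset and Weil-number considerations force non-Tate eigenvalues to be ``genuinely absent'' (and odd-weight cohomology to vanish) is a non sequitur: every tool you invoke sees only the net multiplicities $\sum_i (-1)^i m_i(\alpha)$, so cancellation between degrees of opposite parity is invisible to it; moreover the statement you are aiming at is strictly stronger than the theorem, whose conclusion is itself only an alternating sum. (ii) The Newton--Hodge step runs backwards: Mazur's theorem (Katz's conjecture) says the Newton polygon lies \emph{on or above} the Hodge polygon, so Frobenius eigenvalues all equal to $q^k$ (straight-line Newton polygon of slope $k$) do not force the Hodge polygon up to meet it. Concretely, for a supersingular elliptic curve $E$ over $\mathbb{F}_{p^2}$ with Frobenius acting as $-p$, every Frobenius eigenvalue on $H^2(E \times E)$ equals $q = p^2$, yet $h^{2,0} = h^{0,2} = 1$; so ``eigenvalue $q^k$ in weight $2k$ implies type $(k,k)$'' is false even in the pure smooth projective setting. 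The underlying obstruction is that $\ell$-adic cohomology in residue characteristic different from $\ell$ can never see the Hodge filtration, only the weight $p+q$, so no manipulation of Frobenius eigenvalues alone can finish the proof. Katz closes the gap with $p$-adic Hodge theory: taking $\ell = p$, the de Rham/Hodge--Tate comparison makes the dimensions $\dim gr^p_F$ of the Hodge filtration computable from the $p$-adic Galois representation, additively in exact sequences; applied weight by weight to the virtual representation $\sum_i (-1)^i [Gr^W_w H^i_c]$, whose semisimplification is $a_{w/2}[\mathbb{Q}_p(-w/2)]$ by your counting argument together with Chebotarev, this yields exactly $\sum_i (-1)^i h^{p,q,i}_c(X) = a_p \delta_{pq}$, hence $E_X(u,v) = P_X(uv)$. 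Without some such $p$-adic or crystalline comparison input the conclusion is out of reach, and your final display rests entirely on the unestablished purity claim, since it identifies $\sum_i(-1)^i h^{k,k,i}_c$ with $\sum_i(-1)^i\dim_{\mathbb{C}} Gr^W_{2k}H^i_c$.
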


Theorem \ref{thmkatz} gives an arithmetic technique for computing the $E$-polynomials of certain complex algebraic varieties. We will apply this to a number of singular character varieties.

\section{$\mathbb{F}_q$-points of character varieties for $SL_n$ and $GL_n$}\label{secfqpts}

\subsection{Character varieties}\label{seccv}
Let $\Gamma$ denote a finitely generated group and $G$ a reductive group scheme over $\mathbb{Z}$, given as a closed subgroup of $GL_N(\mathbb{Z})$ for some $N$. Let $\Aff_\mathbb{Z}$ denote the category of affine $\mathbb{Z}$-schemes, $Set$ the category of sets and consider the contravariant functor $F \colon \Aff_\mathbb{Z} \to Set$ sending the affine scheme $Spec(R)$ to $Hom( \Gamma , G(R))$, the set of homomorphisms from $\Gamma$ into $G(R)$, the group of $R$-points of $G$. We claim that this functor is representable by an affine $\mathbb{Z}$-scheme of finite type $X$. To show this, choose a presentation
\begin{equation*}
\Gamma = \langle x_1 , \dots , x_k \; | \;  r_j(x_1, \dots , x_k) , j \in J   \rangle
\end{equation*}
of $\Gamma$. Identify $G$ with a closed subscheme of $GL_N(\mathbb{Z})$. The matrix coefficients $(r_j)^a_b$ of the relations $r_j$ define elements of $\mathbb{Z}[G^k]$. Let $I$ be the ideal in $\mathbb{Z}[G^k]$ generated by the $\{ (r_j)^a_b - \delta^a_b \}_{j \in J, a,b = 1, \dots , N}$, where $\delta^a_b$ is the Kronecker delta and set $S = \mathbb{Z}[G^k]/I$. Then $X = Spec(S)$ is an affine $\mathbb{Z}$-scheme of finite type. Given a commutative ring $R$, the set of $R$-points of $X$ is:
\begin{equation*}
\begin{aligned}
X(R) &= Hom_{\Aff_{\mathbb{Z}}}( Spec(R) , X ) \\
&= Hom_{Ring}( S , R ) \\
&= Hom_{Ring}( \mathbb{Z}[G^k]/I , R) \\
&= Hom(\Gamma , G(R) ),
\end{aligned}
\end{equation*}
showing that $X$ does indeed represent the functor $F$. The Yoneda lemma shows that $X$ is independent of the choice of presentation of $\Gamma$ and we will write $X = Hom( \Gamma , G)$. Note that in general $X$ need not be reduced.\\

The group scheme $G$ acts on $G^k$ by conjugation componentwise. The ideal $I$ is preserved by $G$, so we obtain an action of $G$ on $X = Spec( S )$. On passing to $R$-points, this gives the action of $G(R)$ on $X(R) = Hom(\Gamma , G(R))$ by conjugation.\\

Let $X_\mathbb{C} = X \times_\mathbb{Z} \mathbb{C}$. Then $X_\mathbb{C}$ is a complex algebraic variety whose closed points are naturally in bijection with $Hom(\Gamma , G(\mathbb{C}))$. Clearly $X_\mathbb{C} = Spec( S_\mathbb{C} )$, where $S_\mathbb{C} = S \otimes_\mathbb{Z} \mathbb{C}$. The group $G(\mathbb{C})$ acts by conjugation on $X_\mathbb{C}$. We define the {\em complex character variety} $Y_\mathbb{C} = Rep( \Gamma , G(\mathbb{C}))$ to be the affine GIT quotient of this action, i.e.
\begin{equation*}
Rep( \Gamma , G(\mathbb{C}) ) = Hom(\Gamma , G(\mathbb{C}))//G(\mathbb{C}) = Spec( S_\mathbb{C}^{ G(\mathbb{C}) } ),
\end{equation*}
where $S_\mathbb{C}^{ G(\mathbb{C}) }$ denotes the subring of $G(\mathbb{C})$-invariants of $S_\mathbb{C}$.\\

Next we look for a spreading out of the character variety $Y_\mathbb{C} = Rep( \Gamma , G(\mathbb{C}) )$. Let $R \subset \mathbb{C}$ be a subring which is finitely generated as a $\mathbb{Z}$-algebra. Let $S_R = S \otimes_\mathbb{Z} R$, $X_R = X \times_\mathbb{Z} R = Spec( S_R)$ and $Y_R = Spec( S_R^{G(R)} )$. By a theorem of Seshadri \cite[Lemma 2]{ses}, since the inclusion $i \colon R \to \mathbb{C}$ is a flat morphism, we have
\begin{equation*}
S_R^{G(R)} \otimes_R \mathbb{C} = ( S_R \otimes_R \mathbb{C})^{G(\mathbb{C})} = S_\mathbb{C}^{G(\mathbb{C})}.
\end{equation*}
Thus $Y_R \times_R \mathbb{C} = Y_\mathbb{C}$, showing that $Y_R$ is a spreading out of the character variety $Y_\mathbb{C} = Rep( \Gamma , G(\mathbb{C}) )$.\\

For reasons that will fully become clear later, it will be convenient to let $R$ be the ring $R = \mathbb{Z}[1/N , \zeta_N ]$, where $N$ is some positive integer and $\zeta_N$ is a primitive $N$-th root of unity. A partial justification for this choice is given by the following lemma.

\begin{lemma}\label{lemq1}
Let $\mathbb{F}_q$ be a finite field of order $q$. There exists a (unital) homomorphism $\phi \colon R \to \mathbb{F}_q$ if and only if $q = 1 \; ({\rm mod} \; N)$.
\end{lemma}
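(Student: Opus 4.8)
The plan is to exploit the identification $R = \mathbb{Z}[1/N, \zeta_N] \cong \mathbb{Z}[1/N][X]/(\Phi_N(X))$, where $\Phi_N$ denotes the $N$-th cyclotomic polynomial; this follows from the standard isomorphism $\mathbb{Z}[\zeta_N] \cong \mathbb{Z}[X]/(\Phi_N(X))$ after inverting $N$, using that $\Phi_N$ is monic. Under this presentation, a unital ring homomorphism $\phi \colon R \to \mathbb{F}_q$ is the same data as a unital homomorphism $\mathbb{Z}[1/N] \to \mathbb{F}_q$ (which exists if and only if $N$ is invertible in $\mathbb{F}_q$) together with a choice of root of $\Phi_N$ in $\mathbb{F}_q$ to which $\zeta_N$ is sent. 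Writing $q = p^f$, invertibility of $N$ amounts to $p \nmid N$. Thus the lemma reduces to showing that $\Phi_N$ has a root in $\mathbb{F}_q$ with $p \nmid N$ if and only if $q \equiv 1 \pmod{N}$.

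For the reverse implication, suppose $q \equiv 1 \pmod{N}$, so $N \mid q-1$; since $\gcd(q, q-1) = 1$ this forces $p \nmid N$. As $\mathbb{F}_q^\times$ is cyclic of order $q-1$ and $N \mid q-1$, it contains an element $\omega$ of exact order $N$. Because $p \nmid N$, the polynomial $X^N - 1$ is separable over $\mathbb{F}_q$, so the factorisation $X^N - 1 = \prod_{d \mid N} \Phi_d(X)$ reduces modulo $p$ to a product whose factors share no roots; an element of exact order $N$ is therefore a root of $\overline{\Phi_N}$ and of none of the $\overline{\Phi_d}$ with $d < N$. Hence $\omega$ is a root of $\Phi_N$ in $\mathbb{F}_q$, and sending $\zeta_N \mapsto \omega$, $1/N \mapsto (N \cdot 1)^{-1}$ defines the desired $\phi$.

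For the forward implication, suppose $\phi$ exists. Then $\phi(N)$ is invertible with inverse $\phi(1/N)$, so $p \nmid N$, and $\zeta := \phi(\zeta_N)$ is a nonzero element with $\zeta^N = 1$. It remains to show $\zeta$ has exact order $N$, for then $N \mid \lvert \mathbb{F}_q^\times \rvert = q - 1$. Using $p \nmid N$ again, the reduction $\overline{X^N - 1}$ is separable, and $\zeta$ is a root of $\overline{\Phi_N}$ because it is the image of the root $\zeta_N$; since $\overline{\Phi_N}$ and the $\overline{\Phi_d}$, $d \mid N$, $d < N$, have no common roots, $\zeta$ satisfies $\zeta^d \neq 1$ for every proper divisor $d$ of $N$. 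Thus $\zeta$ has order exactly $N$ and $q \equiv 1 \pmod{N}$.

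The main technical point, common to both directions, is the separability of $X^N - 1$ over $\mathbb{F}_q$ when $p \nmid N$ and the resulting clean correspondence between roots of $\overline{\Phi_N}$ and elements of exact order $N$. A self-contained alternative for the forward direction, avoiding an explicit appeal to separability, is to observe that $\prod_{j=1}^{N-1}(\zeta_N - \zeta_N^{\, j+1}) = N \zeta_N^{\, N-1}$, obtained by differentiating $X^N - 1 = \prod_{j=0}^{N-1}(X - \zeta_N^{\, j})$ and evaluating at $\zeta_N$; this is a unit of $R$ since $N$ and $\zeta_N$ are units, so its image under $\phi$ is a unit of $\mathbb{F}_q$, forcing each factor $\zeta - \zeta^{\, j+1}$ to be nonzero and hence $\zeta^{\, j} \neq 1$ for $1 \le j \le N-1$, which pins down the order of $\zeta$ directly.
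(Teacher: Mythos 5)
Your proof is correct and follows essentially the same route as the paper: both directions run through the presentation $R \cong \mathbb{Z}[1/N][X]/(\Phi_N(X))$ and the fact that, when $p \nmid N$, the roots of the mod-$p$ reduction of $\Phi_N$ are exactly the elements of multiplicative order $N$. The differences are minor refinements rather than a new approach: you justify that key fact via separability of $X^N-1$ (the paper cites it as known and detours through an extension $\mathbb{F}_{q^d}$, which you avoid by working directly in $\mathbb{F}_q$), and your alternative closing argument --- that $\prod_{j=1}^{N-1}(\zeta_N - \zeta_N^{\,j+1}) = N\zeta_N^{\,N-1}$ is a unit of $R$, forcing $\phi(\zeta_N)$ to have exact order $N$ --- is a nice self-contained touch.
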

\begin{proof}
Suppose that $\phi \colon R \to \mathbb{F}_q$ is a (unital) homomorphism. Then since $N$ is a unit of $R$ we must have that $q$ and $N$ are coprime. It follows that $N$ divides $q^d - 1$ for some $d$ and hence there is a degree $d$ extension $\mathbb{F}_{q^d} \supseteq \mathbb{F}_q$ which contains a primitive $N$-th root of unity, i.e. an element $x \in \mathbb{F}_{q^d}$ such that $x^N = 1$ but $x^a \neq 1$ for all $0 < a < N$. Let $\Phi_N(x) \in \mathbb{Z}[x]$ denote the $N$-th cyclotomic polynomial and $\Phi_N^p(x) \in \mathbb{Z}_p[x]$ the mod $p$ reduction of $\Phi_N(x)$, where $p$ is the characteristic of $\mathbb{F}_q$. The roots of $\Phi_N^p(x)$ are precisely the primitive $N$-th roots of unity in $\mathbb{F}_{q^d}$. In particular, since $\phi(\zeta_N)$ is a root of $\Phi_N^p(x)$, then $\phi(\zeta_N)$ is a primitive $N$-th root of unity. But $\phi(\zeta_N) \in \mathbb{F}_q$, so we must have that $N$ divides $q-1$. The converse is straightforward, since $R \cong \mathbb{Z}[1/N , x]/ ( \Phi_N(x) )$.
\end{proof}

\subsection{$\mathbb{F}_q$-points}\label{secfq}
Let $\phi \colon R \to \mathbb{F}_q$ be a homomorphism. This makes $\mathbb{F}_q$ into an $R$-scheme. We wish to count the number of $\mathbb{F}_q$-points of $Y_R$, i.e. the number of $R$-morphisms $Spec(\mathbb{F}_q) \to Y_R$. Noting that the inclusion $\mathbb{Z} \to R$ is flat, we have again by \cite[Lemma 2]{ses} that
\begin{equation*}
S^{G(\mathbb{Z})} \otimes_\mathbb{Z} R = S_R^{G(R)}.
\end{equation*}
Therefore,
\begin{equation*}
S_R^{G(R)} \otimes_R \mathbb{F}_q = \left( S^{G(\mathbb{Z})} \otimes_\mathbb{Z} R \right) \otimes_R \mathbb{F}_q = S^{G(\mathbb{Z})} \otimes_\mathbb{Z} \mathbb{F}_q.
\end{equation*}
So the $\mathbb{F}_q$-points of $Y_R$ may be canonically identified with the $\mathbb{F}_q$-points of $Y_\mathbb{Z}$. Note that this description does not involve the ring $R$. In fact the role of the ring $R$ is simply that, according to Lemma \ref{lemq1}, it allows us to restrict ourselves to those $q$ satisfying $q = 1 \; ({\rm mod} \; N)$ for any desired positive integer $N$.\\

To simplify notation, we will write $Y$ for $Y_\mathbb{Z}$. To count the number of $\mathbb{F}_q$-points of $Y$, it turns out to be easiest to first consider points over an algebraic completion $\overline{\mathbb{F}}_q$ of $\mathbb{F}_q$ and then count fixed points of the (arithmetic) Frobenius map. To describe the $\overline{\mathbb{F}}_q$-points of $Y$ it is useful to first consider $X$. Observe that the $\overline{\mathbb{F}}_q$-points of $X$ can be canonically identified with the set $Hom( \Gamma , G(\overline{\mathbb{F}}_q) )$ and note that $G(\overline{\mathbb{F}}_q)$ acts on $Hom( \Gamma , G(\overline{\mathbb{F}}_q) )$ by conjugation. By another theorem of Seshadri \cite[Theorem 3 (ii)]{ses}, the $\overline{\mathbb{F}}_q$-points of $Y$ can be described in terms of the $\overline{\mathbb{F}}_q$-points of $X$ as follows. Write $X(\overline{\mathbb{F}}_q)$ and $Y(\overline{\mathbb{F}}_q)$ for the sets of $\overline{\mathbb{F}}_q$-points of $X$ and $Y$. If $x \in X(\overline{\mathbb{F}}_q)$, let $\mathcal{O}_x$ denote the $G(\overline{\mathbb{F}}_q)$-orbit of $x$. Then $Y(\overline{\mathbb{F}}_q)$ is the quotient of $X(\overline{\mathbb{F}}_q)$ by the following equivalence relation:
\begin{equation}\label{equequiv}
x_1 \sim x_2 \text{ if and only if } \overline{\mathcal{O}_{x_1}} \cap \overline{\mathcal{O}_{x_2}} \neq \varnothing, \text{ where } \overline{\mathcal{O}_{x_i}} \text{ denotes the closure of } \mathcal{O}_{x_i} \text{ in } X(\overline{\mathbb{F}}_q).
\end{equation}

We will now specialise to the case where $G = GL_n(\mathbb{Z})$ or $SL_n(\mathbb{Z})$. Consider first the case $G = GL_n(\mathbb{Z})$. Then $X(\overline{\mathbb{F}}_q)$ is the set of homomorphisms $\rho \colon \Gamma \to GL_n(\overline{\mathbb{F}}_q)$. In particular, $\rho$ can be thought of as an $n$-dimensional representation of $\Gamma$ over the field $\overline{\mathbb{F}}_q$. The action of $GL_n(\overline{\mathbb{F}}_q)$ on $X(\overline{\mathbb{F}}_q)$ is by conjugation $\rho \mapsto g \rho g^{-1}$, $g \in GL_n(\overline{\mathbb{F}}_q)$. Clearly this descends to an action of $PGL_n(\overline{\mathbb{F}}_q)$. We have that two homomorphisms $\rho_1,\rho_2 \colon \Gamma \to GL_n(\overline{\mathbb{F}}_q)$ are conjugate under the $GL_n(\overline{\mathbb{F}}_q)$-action if and only if they are isomorphic as representations. Next consider the case $G = SL_n(\mathbb{Z})$. Then $X(\overline{\mathbb{F}}_q)$ is the set of homomorphisms $\rho \colon \Gamma \to SL_n(\overline{\mathbb{F}}_q)$, i.e. $n$-dimensional representations over $\overline{\mathbb{F}}_q$ with trivial determinant. Two such homomorphisms are conjugate under the $SL_n(\overline{\mathbb{F}}_q)$-action if and only if they are isomorphic as representations. This is true because the projection $SL_n(\overline{\mathbb{F}}_q) \to PGL_n(\overline{\mathbb{F}}_q)$ is surjective (as $\overline{\mathbb{F}}_q$ is algebraically closed).

\begin{proposition}\label{propfqbarpts}
For $G = GL_n(\mathbb{Z})$, the set of $\overline{\mathbb{F}}_q$-points of $Y$ may be identified with rank $n$ reductive representations of $\Gamma$ over $\overline{\mathbb{F}}_q$. For $G = SL_n(\mathbb{Z})$, the set of $\overline{\mathbb{F}}_q$-points of $Y$ may be identified with the rank $n$ reductive representations of $\Gamma$ over $\overline{\mathbb{F}}_q$ with trivial determinant.
\end{proposition}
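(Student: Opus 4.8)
The plan is to reduce the statement to the standard fact that, for a reductive group acting on an affine variety over an algebraically closed field, the points of the GIT quotient are in bijection with the \emph{closed} orbits. Concretely, the relation (\ref{equequiv}) is an equivalence relation precisely because every orbit closure contains a unique closed orbit, and two points are equivalent exactly when they share this closed orbit; hence each equivalence class contains a unique closed $G(\overline{\mathbb{F}}_q)$-orbit. Granting this, and recalling from the discussion above that two homomorphisms lie in the same orbit if and only if they are isomorphic as representations, the set $Y(\overline{\mathbb{F}}_q)$ is identified with the isomorphism classes of representations whose orbit is closed. The proposition then reduces to the single equivalence: the orbit $\mathcal{O}_\rho$ is closed if and only if $\rho$ is reductive (completely reducible). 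I would prove the two implications in turn, treating $G = GL_n$ first and then noting the minor adjustment for $G = SL_n$.

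For ``not reductive $\Rightarrow$ not closed'', I would start from a representation $\rho$ that is not semisimple, choose a composition series $0 = V_0 \subset V_1 \subset \dots \subset V_k = \overline{\mathbb{F}}_q^{\,n}$ by $\rho$-invariant subspaces, and pick a compatible basis so that each $\rho(\gamma)$ is block upper triangular with diagonal blocks the irreducible composition factors. Then I would degenerate $\rho$ by the one-parameter subgroup $\lambda(t) = \mathrm{diag}(t^{a_1} I_{d_1}, \dots, t^{a_k} I_{d_k})$ with $a_1 > \dots > a_k$ and $d_i = \dim(V_i/V_{i-1})$: conjugation scales the $(i,j)$ block by $t^{a_i - a_j}$, so letting $t \to 0$ kills every strictly-upper block and the limit is the block-diagonal associated graded $\rho^{\mathrm{ss}} = \bigoplus_i V_i/V_{i-1}$. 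Since $\rho$ is not semisimple, $\rho \not\cong \rho^{\mathrm{ss}}$, so $\rho^{\mathrm{ss}} \in \overline{\mathcal{O}_\rho} \setminus \mathcal{O}_\rho$ and the orbit is not closed. In the $SL_n$ case I would first subtract a common constant from the exponents to arrange $\sum_i d_i a_i = 0$; this alters $\lambda(t)$ only by a scalar, leaving the conjugation action unchanged, so the same degeneration takes place inside $SL_n(\overline{\mathbb{F}}_q)$.

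For the converse ``reductive $\Rightarrow$ closed'', I would suppose $\rho$ is semisimple but, seeking a contradiction, that $\mathcal{O}_\rho$ is not closed. By the Hilbert--Mumford criterion, which holds over any algebraically closed field, there is a one-parameter subgroup $\lambda$ of $G(\overline{\mathbb{F}}_q)$ for which $\rho' = \lim_{t \to 0} \lambda(t) \cdot \rho$ exists and lies outside $\mathcal{O}_\rho$. After conjugating I may take $\lambda$ diagonal; grouping coordinates by $\lambda$-weight produces a $\rho$-invariant flag whose associated graded is exactly $\rho'$. But a semisimple representation admits an invariant complement to every invariant subspace, so this flag splits and $\rho$ is conjugate to its own associated graded, giving $\rho' \in \mathcal{O}_\rho$ --- a contradiction. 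Hence semisimple representations have closed orbits, completing the equivalence. The determinant condition in the $SL_n$ statement is then automatic, as the homomorphisms in $X(\overline{\mathbb{F}}_q)$ are valued in $SL_n(\overline{\mathbb{F}}_q)$ and so have trivial determinant.

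The hard part will be the ``reductive $\Rightarrow$ closed'' direction, which relies on the Hilbert--Mumford criterion and on identifying a destabilizing one-parameter subgroup with an invariant flag whose associated graded is the limit point; some care is warranted because $\overline{\mathbb{F}}_q$ has positive characteristic, although both Hilbert--Mumford and the splitting of invariant submodules of a semisimple module are valid over an arbitrary algebraically closed field. A secondary point to pin down is the GIT input that each class of (\ref{equequiv}) contains a unique closed orbit, which follows from the structure of fibres of the affine quotient together with Seshadri's theorem already cited above.
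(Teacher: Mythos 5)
Your proposal is correct, but it takes a genuinely different route from the paper in the crucial second step. The first step is shared: your one-parameter subgroup $\lambda(t)=\mathrm{diag}(t^{a_1}I_{d_1},\dots,t^{a_k}I_{d_k})$ degenerating $\rho$ to its associated graded is essentially the paper's conjugation by $D_t$, iterated along a Jordan--H\"older filtration, and both arguments conclude that every class of the relation (\ref{equequiv}) contains a reductive representative. The divergence is in proving that non-isomorphic reductive representations define distinct points of $Y(\overline{\mathbb{F}}_q)$. The paper never determines which orbits are closed: it separates orbit closures directly by the invariant regular functions $f_x(\rho)=tr(\rho(x))$, using linear independence of characters of distinct irreducible representations \cite[Theorem 3.6.2(i)]{eghlsvy}. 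You instead appeal to the GIT dictionary (each class of (\ref{equequiv}) contains a unique closed orbit) and characterise the closed orbits as exactly the reductive representations, with the implication ``reductive $\Rightarrow$ closed'' resting on the Hilbert--Mumford/Kempf criterion in positive characteristic together with the splitting of invariant flags in semisimple modules. Your route costs more machinery --- Kempf's theorem is a serious input where the paper needs only a standard fact about characters --- but it buys uniformity in the characteristic: the trace argument tacitly requires the multiplicities of irreducible constituents to be visible mod $p$, and it breaks down when $p\le n$ (for non-isomorphic irreducibles $V,W$ of equal dimension, $V^{\oplus p}$ and $W^{\oplus p}$ are non-isomorphic reductive representations with identical, identically zero, characters), whereas your closed-orbit argument is characteristic-free. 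This gap is harmless for the paper's applications, where $n=2,3$ and the congruence $q = 1 \; ({\rm mod} \; N)$ forces $p>n$, and it can be repaired in general by replacing traces with coefficients of characteristic polynomials and invoking the Brauer--Nesbitt theorem; still, your argument is the one that proves the proposition as literally stated, for all $q$. Two details you should tighten: in the $SL_n$ case, subtracting a common constant from the weights $a_i$ need not keep them integral, so first replace each $a_i$ by $na_i$ and then subtract $\sum_i d_i a_i$; and the claim that each class of (\ref{equequiv}) contains a unique closed orbit deserves its one-line proof (an orbit of minimal dimension inside $\overline{\mathcal{O}_x}$ is closed, and two closed orbits whose closures meet must coincide).
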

\begin{proof}
Let $E$ be a rank $n$ representation of $\Gamma$ over $\overline{\mathbb{F}}_q$. Suppose that $E$ is an extension $0 \to A \to E \to B \to 0$, where $A$ has rank $n_1$ and $B$ has rank $n_2$. Thus $E$ is given by a homomorphism $\rho \colon \Gamma \to GL_n(\overline{\mathbb{F}}_q)$ of the form
\begin{equation*}
\rho(x) = \left[ \begin{matrix} \rho_A(x) & \psi(x) \\ 0 & \rho_B(x) \end{matrix} \right],
\end{equation*}
where $\rho_A \colon \Gamma \to GL_{n_1}(\overline{\mathbb{F}}_q)$, $\rho_B\colon \Gamma \to GL_{n_2}(\overline{\mathbb{F}}_q)$ are homomorphisms corresponding to the representations $A$ and $B$. For any $t \in \overline{\mathbb{F}}_q^\times$, let $D_t \in SL_n(\overline{\mathbb{F}}_q)$ be given by:
\begin{equation*}
D_t = \left[ \begin{matrix} t^{n_2} & 0 \\ 0 & t^{-n_1} \end{matrix} \right].
\end{equation*}
Then
\begin{equation*}
D_t \rho(x) D_t^{-1} = \left[ \begin{matrix} \rho_A(x) & t^{n} \psi(x) \\ 0 & \rho_B(x) \end{matrix} \right].
\end{equation*}
In particular, the closure of the orbit $\mathcal{O}_\rho$ contains the representation $\rho_A \oplus \rho_B$. Thus $\rho \sim \rho_A \oplus \rho_B$. More generally, if a representation $E$ admits a filtration $0 = F_0 \subset F_1 \subset F_2 \subset \dots \subset F_k = E$, then $E$ is equivalent to the associated graded representation $(F_1/F_0) \oplus (F_2/F_1) \oplus \dots \oplus (F_k/F_{k-1})$. Now since every representation of $\Gamma$ admits a Jordan-H\"older decomposition, we see that every representation is equivalent to a reductive representation. It remains to show that non-isomorphic reductive representations are not equivalent.\\

Let $G(\overline{\mathbb{F}}_q)$ be either $GL_n(\overline{\mathbb{F}}_q)$ or $SL_n(\overline{\mathbb{F}}_q)$ and let $tr \colon G(\overline{\mathbb{F}}_q) \to \overline{\mathbb{F}}_q$ be the trace. Given $x \in \Gamma$, consider the regular function $f_x \colon Hom(\Gamma , G(\overline{\mathbb{F}}_q) ) \to \overline{\mathbb{F}}_q$ given by $f_x(\rho) = tr( \rho(x))$. These are conjugation invariant functions and hence define regular functions on $Y_{\overline{\mathbb{F}}_q} = Y \times_\mathbb{Z} \overline{\mathbb{F}}_q$. The characters of distinct irreducible representations of $\Gamma$ are linearly independent \cite[Theorem 3.6.2(i)]{eghlsvy}, thus if $\rho_1,\rho_2$ are non-isomorphic reductive representations, then there is some $x \in \Gamma$ for which $f_x(\rho_1) \neq f_x(\rho_2)$. It follows that $\overline{\mathcal{O}_{\rho_1}} \cap \overline{\mathcal{O}_{\rho_2}} = \varnothing$, since $f_x$ is constant on $\mathcal{O}_{\rho_1}$ and $\mathcal{O}_{\rho_2}$ with different values.

\end{proof}

Now we want to pass from the $\overline{\mathbb{F}}_q$-points of $Y$ to the $\mathbb{F}_q$-points. Let $\sigma \colon \overline{\mathbb{F}}_q \to \overline{\mathbb{F}}_q$ be the Frobenius automorphism $\sigma(x) = x^q$. The action of $\sigma$ on $\overline{\mathbb{F}}_q$ induces a corresponding action on $Y(\overline{\mathbb{F}}_q)$. Namely, if $f \colon Spec(\overline{\mathbb{F}}_q) \to Y$ is an $\overline{\mathbb{F}}_q$-point of $Y$, then we get a new $\overline{\mathbb{F}}_q$-point by taking the composition $f \circ \sigma^* \colon Spec(\overline{\mathbb{F}}_q) \to Y$. By abuse of notation we will denote this action as $\sigma \colon Y(\overline{\mathbb{F}}_q) \to Y(\overline{\mathbb{F}}_q)$. This is usually called the arithmetic Frobenius map. From the definition it is immediate that the fixed point set of $\sigma \colon Y(\overline{\mathbb{F}}_q) \to Y(\overline{\mathbb{F}}_q)$ is precisely the set $Y(\mathbb{F}_q)$ of $\mathbb{F}_q$-points of $Y$. Thus we will count $Y(\mathbb{F}_q)$ by counting the fixed point set of $\sigma$ on $Y(\overline{\mathbb{F}}_q)$.\\

We let the Frobenius map $\sigma$ act on $GL_n(\overline{\mathbb{F}}_q)$ and $SL_n(\overline{\mathbb{F}}_q)$ by sending a matrix $A$ with entries $A_{ij} \in \overline{\mathbb{F}}_q$ to the matrix $\sigma(A)$ with entries $\sigma(A)_{ij} = \sigma(A_{ij})$. Recall that $Y(\overline{\mathbb{F}}_q)$ can be canonically identified with the set of isomorphism classes of reductive representations $\rho \colon \Gamma \to G(\overline{\mathbb{F}}_q)$, where $G(\overline{\mathbb{F}}_q)$ is either $GL_n(\overline{\mathbb{F}}_q)$ or $SL_n(\overline{\mathbb{F}}_q)$. Then it is easy to see that the Frobenius map $\sigma \colon Y(\overline{\mathbb{F}}_q) \to Y(\overline{\mathbb{F}}_q)$ sends a representation $\rho$ to the composition $\sigma \circ \rho$.

\begin{lemma}\label{lemfixrep}
Let $\rho \colon \Gamma \to GL_n(\overline{\mathbb{F}}_q)$ be a reductive representation such that $\sigma \circ \rho$ is conjugate to $\rho$. Then $\rho$ is conjugate to a representation $\rho' \colon \Gamma \to GL_n(\mathbb{F}_q)$.
\end{lemma}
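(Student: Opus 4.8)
The plan is to use the Lang--Steinberg theorem. By hypothesis, ``$\sigma \circ \rho$ is conjugate to $\rho$'' means that there exists a fixed $g \in GL_n(\overline{\mathbb{F}}_q)$ with $\sigma(\rho(x)) = g\,\rho(x)\,g^{-1}$ for all $x \in \Gamma$. The goal is to produce $h \in GL_n(\overline{\mathbb{F}}_q)$ so that the conjugate $\rho' := h\rho h^{-1}$ is fixed by $\sigma$, i.e.\ $\sigma(\rho'(x)) = \rho'(x)$ for all $x$. Since the $\sigma$-fixed points of $GL_n(\overline{\mathbb{F}}_q)$ are precisely $GL_n(\mathbb{F}_q)$, such a $\rho'$ is then the desired representation valued in $GL_n(\mathbb{F}_q)$.

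First I would write out the condition explicitly: $\sigma(h\rho(x)h^{-1}) = \sigma(h)\,\sigma(\rho(x))\,\sigma(h)^{-1} = \sigma(h)\,g\,\rho(x)\,g^{-1}\,\sigma(h)^{-1}$, which should equal $h\rho(x)h^{-1}$. It therefore suffices to find $h$ solving the Lang equation $h^{-1}\sigma(h) = g^{-1}$: for then $\sigma(h) = hg^{-1}$, and substituting collapses the displayed expression directly to $h\rho(x)h^{-1}$, so $\rho'$ is $\sigma$-fixed as required. The key input is the surjectivity of the Lang map $L(h) = h^{-1}\sigma(h)$ on $GL_n(\overline{\mathbb{F}}_q)$. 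This is exactly the Lang--Steinberg theorem: $GL_n$ is a connected algebraic group over $\overline{\mathbb{F}}_q$ and $\sigma$ (raising matrix entries to the $q$-th power) is a Frobenius endomorphism with finite fixed-point group $GL_n(\mathbb{F}_q)$, so $L$ is onto. Applying surjectivity to the element $g^{-1}$ yields the needed $h$, and conjugating by it finishes the argument.

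I do not expect a genuine obstacle here, since the only real content is Lang's theorem. Two points warrant care in the write-up. First, the bookkeeping of which side $g$ acts on: depending on the chosen convention one solves $h^{-1}\sigma(h) = g$ or $g^{-1}$, but the surjectivity of the Lang map covers either case, so nothing is lost. Second, it is worth noting that the argument is insensitive to the reductivity hypothesis on $\rho$: any single $g$ conjugating $\sigma\circ\rho$ to $\rho$ suffices, and no cocycle-type compatibility of $g$ under iterates of $\sigma$ is required, precisely because one solves the single Lang equation directly rather than invoking the vanishing of $H^1(\mathrm{Gal}(\overline{\mathbb{F}}_q/\mathbb{F}_q), GL_n(\overline{\mathbb{F}}_q))$. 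The same proof applies verbatim to the $SL_n$ statement, using that $SL_n$ is likewise connected.
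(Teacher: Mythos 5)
Your proof is correct, but it takes a genuinely different route from the paper's. The paper never invokes Lang--Steinberg; it argues in two stages: for irreducible $\rho$ it notes that $\rho$ lands in some $GL_n(\mathbb{F}_{q^d})$, uses Schur's lemma to show the norm $g\sigma(g)\cdots\sigma^{d-1}(g)$ of the intertwiner is scalar, rescales $g$ so that this norm becomes the identity, and then applies Hilbert's Theorem 90 for the finite cyclic extension $\mathbb{F}_{q^d}/\mathbb{F}_q$ to write $g = h^{-1}\sigma(h)$; for general reductive $\rho$ it decomposes the representation into Frobenius orbits of irreducible summands $\rho_j \oplus \sigma(\rho_j) \oplus \dots \oplus \sigma^{d_j-1}(\rho_j)$ and repeats the Hilbert 90 argument with an explicit cyclic permutation matrix. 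Your single application of Lang's theorem replaces all of this: no case analysis is needed, and, as you correctly observe, reductivity is never used --- the statement holds for arbitrary representations, since one solves the Lang equation $h^{-1}\sigma(h) = g^{-1}$ for the one given intertwiner $g$ and no norm/cocycle condition ever arises. The trade-off is the strength of the input: Lang--Steinberg is a theorem about connected algebraic groups, whereas the paper stays at the level of finite Galois cohomology plus Schur's lemma. What the paper's longer route buys is a byproduct it actually needs: the decomposition constructed in its proof is recorded as Remark \ref{remirrep} (irreducible representations over $\mathbb{F}_q$ are exactly the sums $V \oplus \sigma(V) \oplus \dots \oplus \sigma^{d-1}(V)$ over Frobenius orbits of irreducibles over $\overline{\mathbb{F}}_q$), and that structural fact is the key input to Proposition \ref{propcompare} and to the counting of the strata $\mathcal{X}_2^{(2)}$, $\mathcal{X}_3^{(2)}$ later on. So if your proof were substituted for the paper's, Remark \ref{remirrep} would need its own separate argument.
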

\begin{proof}
First we prove the result for irreducible representations. Thus suppose that $\rho \colon \Gamma \to GL_n(\overline{\mathbb{F}}_q)$ is irreducible and $\sigma \circ \rho$ is conjugate to $\rho$, that is, 
\begin{equation}\label{equsigmacong}
\sigma \circ \rho = g^{-1} \rho g,
\end{equation}
for some $g \in GL_n(\overline{\mathbb{F}}_q)$. Note that for some positive integer $d$ we have that $\rho$ is valued in $GL_n(\mathbb{F}_{q^d})$. Indeed, if $\Gamma$ is generated by $x_1, x_2, \dots x_k$, then the matrix coefficients of $\rho(x_1), \rho(x_2), \dots , \rho(x_k)$ are contained in some finite extension $\mathbb{F}_{q^d}$ of $\mathbb{F}_q$. Thus $\sigma^d(\rho) = \rho$. Combined with (\ref{equsigmacong}), this gives $\rho = \sigma^d(\rho) = u^{-1} \rho u$, where $u = g \sigma(g) \sigma^2(g) \dots \sigma^{d-1}(g)$. Then since $\rho$ is irreducible, we have that $u$ is a multiple of the identity, that is
\begin{equation*}
g \sigma(g) \sigma^2(g) \dots \sigma^{d-1}(g) = c Id,
\end{equation*}
for some $c \in \overline{\mathbb{F}}_q^\times$. Since $\overline{\mathbb{F}}_q$ is algebraically closed we can find an $a \in \overline{\mathbb{F}}_q^\times$ such that $a \sigma(a) \dots \sigma^{d-1}(a) = a^{1+q + \dots + q^{d-1}} = c^{-1}$. Replacing $g$ by $ag$, we may assume that 
\begin{equation*}
g \sigma(g) \sigma^2(g) \dots \sigma^{d-1}(g) = Id.
\end{equation*}
From this it follows easily that $\sigma^d(g) = g$, i.e. $g \in GL_n(\mathbb{F}_{q^d})$. Applying Hilbert's Theorem 90, we have that there exists $h \in GL_n(\mathbb{F}_{q^d})$ such that $g = h^{-1}\sigma(h)$. It follows that $\sigma( h\rho h^{-1}) = h \rho h^{-1}$, hence $h \rho h^{-1}$ is valued in $GL_n(\mathbb{F}_q)$, as required.\\

Now we consider the general case where $\rho$ is reductive. Let $\chi_\rho \colon \Gamma \to \overline{\mathbb{F}}_q$ be the character of $\rho$, that is $\chi_\rho(x) = tr( \rho(x) )$. Write $\chi_\rho = \sum_i m_i\chi_i$, where the $\chi_i$ are characters of distinct irreducible representations and $m_i$ are the multiplicities. Since $\sigma \circ \rho$ is conjugate to $\rho$, we have that $\chi_{\sigma \circ \rho} = \sigma( \chi_\rho) = \chi_\rho$. For a given irreducible representation $\rho_j$ with character $\chi_j$, let $d_j$ be the order of $\chi_j$ under the Frobenius action, i.e. let $d_j$ be the least positive integer such that $\sigma^{d_j}(\chi_j) = \chi_j$. Such a $d_j$ exists since, as we have previously argued, $\rho_j$ is valued in some finite extension of $\mathbb{F}_q$. Then since $\sigma(\chi) = \chi$, we can choose a decomposition of $\rho$ of the form 
\begin{equation*}
\rho = \bigoplus_j \left( \rho_j \oplus \sigma(\rho_j) \oplus \dots \oplus \sigma^{d_j-1}(\rho_j) \right)
\end{equation*}
where the $\rho_j$ are irreducible representations (not necessarily distinct) and $d_j$ is the order of $\chi_j$ under the Frobenius action. Set $\rho'_j = \rho_j \oplus \sigma(\rho_j) \oplus \dots \oplus \sigma^{d_j-1}(\rho_j)$ and let $\chi'_j$ be the character of $\rho'_j$. Then $\sigma(\rho'_j) = \rho'_j$, hence $\sigma \circ \chi'_j$ is conjugate to $\chi'_j$. If we can prove the lemma for the representations $\rho'_j$, then by taking direct sum we obtain the result for $\rho$. Thus we are reduced to proving the lemma for a representation of the form $\rho' = \rho \oplus \sigma(\rho) \oplus \dots \oplus \sigma^{d-1}(\rho)$, where $\rho$ is irreducible and the Frobenius action on $\chi$ has order $d$. Note this means that $\sigma^d(\rho)$ is conjugate to $\rho$. Now since $\rho$ is irreducible, the above proof of the lemma in the irreducible case implies that, upon conjugating if necessary, we can assume $\rho$ is valued in $\mathbb{F}_{q^d}$, i.e. it is a homomorphism $\rho \colon \Gamma \to GL_n(\mathbb{F}_{q^d})$. Thus we can assume $\sigma^d(\rho) = \rho$. It follows that $\sigma \circ \rho' = g^{-1} \rho' g$, where $g$ is given by:
\begin{equation*}
g = \left[\begin{matrix} 0 & & & & & & 1 \\
1 & 0 & & & & & \\
& 1 & 0 & & & & \\
& & 1 & 0 & & & \\
& & & & \ddots & & \\
& & & & & 1 & 0
\end{matrix}\right].
\end{equation*}
Clearly $g \sigma(g) \dots \sigma^{d-1}(g) = g^d = Id$. Once again, Hilbert's Theorem 90 implies that $g = h^{-1}\sigma(h)$ for some $h \in GL_n(\mathbb{F}_{q^d})$. Then $\sigma( h\rho' h^{-1}) = h\rho' h^{-1}$, so that $h \rho' h^{-1}$ is valued in $GL_n(\mathbb{F}_q)$, as required.
\end{proof}

\begin{remark}\label{remirrep}
In the proof of Lemma \ref{lemfixrep} we have shown the following. Let $V$ be an irreducible representation of $\Gamma$ over $\overline{\mathbb{F}}_q$. Let $d$ be the least positive integer such that $\sigma^d(V) \cong V$. Set $V' = V \oplus \sigma(V) \oplus \dots \oplus \sigma^{d-1}(V)$. Then $V'$ is a representation of $\Gamma$ over $\mathbb{F}_q$ and is moreover irreducible over $\mathbb{F}_q$. It is easy to see that every irreducible representation of $\Gamma$ over $\mathbb{F}_q$ is of this form for some $V$.
\end{remark}

\begin{proposition}
For $G = GL_n(\mathbb{Z})$, the set of $\mathbb{F}_q$-points of $Y$ is the set of equivalence classes of reductive representations $\rho \colon \Gamma \to GL_n(\mathbb{F}_q)$, where $\rho_1, \rho_2$ are considered equivalent if $\rho_2 = g \rho_1 g^{-1}$, for some $g \in GL_n(\overline{\mathbb{F}}_q)$. For $G = SL_n(\mathbb{Z})$, the set of $\mathbb{F}_q$-points of $Y$ is the set of equivalence classes of reductive representations $\rho \colon \Gamma \to SL_n(\mathbb{F}_q)$, where $\rho_1, \rho_2$ are considered equivalent if $\rho_2 = g \rho_1 g^{-1}$, for some $g \in SL_n(\overline{\mathbb{F}}_q)$.
\end{proposition}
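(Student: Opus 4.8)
The plan is to assemble the three facts already established in \textsection\ref{secfq}: that $Y(\mathbb{F}_q)$ is the fixed-point set of the arithmetic Frobenius $\sigma$ acting on $Y(\overline{\mathbb{F}}_q)$; that $Y(\overline{\mathbb{F}}_q)$ is identified with reductive representations over $\overline{\mathbb{F}}_q$ (with trivial determinant when $G = SL_n$) by Proposition \ref{propfqbarpts}; and the descent statement of Lemma \ref{lemfixrep}. Under the identification of Proposition \ref{propfqbarpts} the Frobenius acts by $[\rho] \mapsto [\sigma \circ \rho]$, so a class $[\rho] \in Y(\overline{\mathbb{F}}_q)$ is $\sigma$-fixed exactly when $\sigma \circ \rho$ is isomorphic to $\rho$. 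For reductive representations isomorphism coincides with conjugacy in $G(\overline{\mathbb{F}}_q)$, so this is precisely the hypothesis of Lemma \ref{lemfixrep}.

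With this in place I would argue as follows. Given a $\sigma$-fixed class $[\rho]$, Lemma \ref{lemfixrep} produces a conjugate $\rho'$ of $\rho$ valued in $GL_n(\mathbb{F}_q)$; in the $SL_n$ case conjugation preserves determinants, so $\rho'$ again has trivial determinant and is therefore valued in $SL_n(\mathbb{F}_q)$. Conversely, any reductive representation valued in $G(\mathbb{F}_q)$ satisfies $\sigma \circ \rho = \rho$ and hence defines a $\sigma$-fixed class. Thus the natural map from reductive representations $\Gamma \to G(\mathbb{F}_q)$ to $Y(\mathbb{F}_q)$, sending $\rho$ to its $\overline{\mathbb{F}}_q$-class, is surjective.

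It remains to identify the fibres of this map, which is the step I expect to require the most care. The essential point is that the $\mathbb{F}_q$-point $[\rho]$ remembers only the $\overline{\mathbb{F}}_q$-isomorphism class of $\rho$, so two representations $\rho_1, \rho_2 \colon \Gamma \to G(\mathbb{F}_q)$ determine the same element of $Y(\mathbb{F}_q)$ if and only if they are conjugate by an element of $G(\overline{\mathbb{F}}_q)$, and \emph{not} merely if they are conjugate over $\mathbb{F}_q$. This follows directly from Proposition \ref{propfqbarpts}, since the equivalence relation defining $Y(\overline{\mathbb{F}}_q)$ is $G(\overline{\mathbb{F}}_q)$-conjugacy of reductive representations and we are simply restricting it to those valued in $G(\mathbb{F}_q)$. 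For $G = SL_n$ I would invoke the surjectivity of $SL_n(\overline{\mathbb{F}}_q) \to PGL_n(\overline{\mathbb{F}}_q)$ recorded in \textsection\ref{secfq}: this both guarantees that the representative $\rho'$ above lies in the same $SL_n(\overline{\mathbb{F}}_q)$-conjugacy class as $\rho$ (the conjugating element from Lemma \ref{lemfixrep} lying a priori only in $GL_n$), and ensures that the resulting equivalence relation is conjugacy by $SL_n(\overline{\mathbb{F}}_q)$, exactly as stated.
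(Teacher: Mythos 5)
Your proposal is correct and follows essentially the same route as the paper's own proof: identify $Y(\overline{\mathbb{F}}_q)$ with reductive representations via Proposition \ref{propfqbarpts}, characterise $Y(\mathbb{F}_q)$ as the Frobenius-fixed classes, descend to $G(\mathbb{F}_q)$-valued representatives via Lemma \ref{lemfixrep}, and observe that the induced equivalence is conjugacy over $\overline{\mathbb{F}}_q$. Your extra care about the $SL_n$ case (determinant preservation and the surjectivity of $SL_n(\overline{\mathbb{F}}_q) \to PGL_n(\overline{\mathbb{F}}_q)$) only makes explicit points the paper treats in passing.
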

\begin{proof}
According to Proposition \ref{propfqbarpts}, the $\overline{\mathbb{F}}_q$-points of $Y$ are given by isomorphism classes of reductive representations (with trivial determinant in the special linear case). The $\mathbb{F}_q$-points of $Y$ are then the fixed points of the Frobenius action. These are precisely the reductive representations $\rho$ such that $\sigma \circ \rho$ is conjugate to $\rho$. According to Lemma \ref{lemfixrep}, these are up to conjugacy, given by representations valued in $GL_n(\mathbb{F}_q)$ (in the special linear case, the determinant of the representation is trivial, so the fixed points of the Frobenius action are given by representations valued in $SL_n(\mathbb{F}_q)$). Lastly we just need to note that two such representations define the same $\mathbb{F}_q$-point of $Y$ if and only if they are conjugate over $\overline{\mathbb{F}}_q$.
\end{proof}

Next, we want to understand better what it means for two representations to be conjugate over $\overline{\mathbb{F}}_q$. For this we introduce some definitions.
\begin{definition}
Let $\rho_1, \rho_2 \colon \Gamma \to GL_n(\mathbb{F}_q)$ be two representations. We will say that $\rho_1, \rho_2$ are:
\begin{itemize}
\item[(1)]{{\em conjugate} if $\rho_2 = g \rho_1 g^{-1}$ for some $g \in GL_n(\mathbb{F}_q)$ }
\item[(2)]{{\em conjugate over $\overline{\mathbb{F}}_q$} if $\rho_2 = g \rho_1 g^{-1}$ for some $g \in GL_n(\overline{\mathbb{F}}_q)$}
\item[(3)]{{\em special conjugate over $\overline{\mathbb{F}}_q$} if $\rho_2 = g \rho_1 g^{-1}$ for some $g \in SL_n(\overline{\mathbb{F}}_q)$.}
\end{itemize}
\end{definition}

\begin{proposition}\label{propcompare}
Let $\rho_1, \rho_2 \colon \Gamma \to GL_n(\mathbb{F}_q)$ be two reductive representations. The following are equivalent:
\begin{itemize}
\item[(1)]{$\rho_1,\rho_2$ are conjugate}
\item[(2)]{$\rho_1,\rho_2$ are conjugate over $\overline{\mathbb{F}}_q$}
\item[(3)]{$\rho_1,\rho_2$ are special conjugate over $\overline{\mathbb{F}}_q$.}
\end{itemize}
\end{proposition}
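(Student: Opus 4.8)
The plan is to prove the chain by establishing the two nontrivial implications, since $(1)\Rightarrow(2)$ and $(3)\Rightarrow(2)$ are immediate from the inclusions $GL_n(\mathbb{F}_q)\subseteq GL_n(\overline{\mathbb{F}}_q)$ and $SL_n(\overline{\mathbb{F}}_q)\subseteq GL_n(\overline{\mathbb{F}}_q)$. It therefore remains to show $(2)\Rightarrow(3)$ and $(2)\Rightarrow(1)$. I note in passing that reductivity is not actually needed for either implication; the arguments work for any two representations conjugate over $\overline{\mathbb{F}}_q$.

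For $(2)\Rightarrow(3)$, I would exploit that $\overline{\mathbb{F}}_q$ is algebraically closed to absorb the determinant into a central scalar. Suppose $\rho_2=g\rho_1 g^{-1}$ with $g\in GL_n(\overline{\mathbb{F}}_q)$, and put $\delta=\det(g)\in\overline{\mathbb{F}}_q^\times$. Choosing $\lambda\in\overline{\mathbb{F}}_q^\times$ with $\lambda^n=\delta$ and setting $g'=\lambda^{-1}g$, we get $\det(g')=\lambda^{-n}\delta=1$, so $g'\in SL_n(\overline{\mathbb{F}}_q)$, while the central factor $\lambda^{-1}$ does not affect conjugation, so $g'\rho_1(g')^{-1}=g\rho_1 g^{-1}=\rho_2$. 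Hence $\rho_1,\rho_2$ are special conjugate over $\overline{\mathbb{F}}_q$.

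The substance of the proposition is the descent statement $(2)\Rightarrow(1)$: isomorphism of the representations over $\overline{\mathbb{F}}_q$ should force isomorphism already over $\mathbb{F}_q$. I would recast this module-theoretically. Writing $M_i=(\mathbb{F}_q^n,\rho_i)$, a conjugacy $\rho_2=g\rho_1 g^{-1}$ over a field $K\supseteq\mathbb{F}_q$ is exactly the data of a $K[\Gamma]$-module isomorphism $M_1\otimes_{\mathbb{F}_q}K\cong M_2\otimes_{\mathbb{F}_q}K$ (the conjugating matrix being the invertible intertwiner). To make the group algebra finite-dimensional I would replace $\mathbb{F}_q[\Gamma]$ by its image $A$ under $\rho_1\oplus\rho_2\colon\Gamma\to GL_{2n}(\mathbb{F}_q)$; this $A$ is a finite-dimensional $\mathbb{F}_q$-algebra, both $M_1,M_2$ are $A$-modules, and tensoring the presentation $0\to\ker\to\mathbb{F}_q[\Gamma]\to A\to 0$ with the flat extension $\overline{\mathbb{F}}_q$ identifies $A\otimes_{\mathbb{F}_q}\overline{\mathbb{F}}_q$ with the analogous image over $\overline{\mathbb{F}}_q$, so that base change of $A$-modules matches base change of representations. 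With this setup the equivalence $(1)\Leftrightarrow(2)$ is precisely the Noether--Deuring theorem: finite-dimensional modules over a finite-dimensional algebra are isomorphic if and only if they become isomorphic after an arbitrary field extension.

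I expect this descent step to be the only real obstacle, and the point to get right is the reduction to a finite-dimensional algebra together with the compatibility of its image with base change, after which Noether--Deuring applies verbatim. As an alternative that stays closer to the Frobenius/Hilbert~90 methods already used in Lemma~\ref{lemfixrep}, one can argue by Galois descent: the variety of intertwiners is defined over $\mathbb{F}_q$, the isomorphisms $\mathrm{Iso}_{\overline{\mathbb{F}}_q}(\rho_1,\rho_2)$ form a torsor under the automorphism group $\mathrm{Aut}(\rho_1)=\bigl(\mathrm{End}_{\overline{\mathbb{F}}_q[\Gamma]}(\rho_1)\bigr)^\times$, and the latter is a \emph{connected} linear algebraic group, being the non-vanishing locus of the determinant of left multiplication inside the affine space of the endomorphism algebra. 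Lang's theorem then trivialises $H^1$ of the Frobenius, producing an intertwiner defined over $\mathbb{F}_q$ and hence a conjugating element of $GL_n(\mathbb{F}_q)$.
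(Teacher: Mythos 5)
Your proposal is correct, but it reaches the key implication $(2)\Rightarrow(1)$ by a genuinely different route than the paper. The paper's proof exploits reductivity in an essential way: writing $V,W$ for the $\mathbb{F}_q$-representations corresponding to $\rho_1,\rho_2$, it invokes Remark \ref{remirrep} to decompose $V\otimes_{\mathbb{F}_q}\overline{\mathbb{F}}_q$ into summands $V_i'=V_i\oplus\sigma(V_i)\oplus\dots\oplus\sigma^{d_i-1}(V_i)$, which are precisely the base changes of the irreducible $\mathbb{F}_q$-representations; an isomorphism $V\otimes_{\mathbb{F}_q}\overline{\mathbb{F}}_q\cong W\otimes_{\mathbb{F}_q}\overline{\mathbb{F}}_q$ then forces the same summands $V_i'$ to occur with the same multiplicities on both sides, whence $V\cong W$ already over $\mathbb{F}_q$. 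You instead reduce to a finite-dimensional algebra (the image $A$ of $\mathbb{F}_q[\Gamma]$ under $\rho_1\oplus\rho_2$, with the base-change compatibility correctly handled via flatness) and quote the Noether--Deuring theorem, or alternatively run Galois descent with Lang's theorem applied to the connected unit group of the endomorphism algebra. Both of your arguments are sound, and they deliver somewhat more than the paper's: as you observe, reductivity is never used, so $(1)\Leftrightarrow(2)$ holds for arbitrary representations. The price is reliance on external results (Krull--Schmidt/Noether--Deuring, or Lang's theorem), whereas the paper's argument is self-contained given Lemma \ref{lemfixrep} and Remark \ref{remirrep}, which it has already established and which also feed into the later counting of strata. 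Finally, your treatment of $(2)\Leftrightarrow(3)$ by rescaling the conjugating matrix by an $n$-th root of its determinant is the same argument as the paper's appeal to surjectivity of $SL_n(\overline{\mathbb{F}}_q)\to PGL_n(\overline{\mathbb{F}}_q)$, merely made explicit.
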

\begin{proof}
(2) and (3) are equivalent because the natural map $SL_n(\overline{\mathbb{F}}_q) \to PGL_n(\overline{\mathbb{F}}_q)$ is surjective. Clearly (1) implies (2), so it remains to show that (2) implies (1). Let $\rho_1, \rho_2 \colon \Gamma \to GL_n(\mathbb{F}_q)$ and let $V,W$ be the corresponding representations over $\mathbb{F}_q$. Then $\rho_1,\rho_2$ are conjugate over $\overline{\mathbb{F}}_q$ if and only if there is an isomorphism $V \otimes_{\mathbb{F}_q} \overline{\mathbb{F}}_q \cong W \otimes_{\mathbb{F}_q} \overline{\mathbb{F}}_q$ of representations over $\overline{\mathbb{F}}_q$. Suppose this is the case. By Remark \ref{remirrep}, there are irreducible representations $V_1,V_2, \dots, V_k$ of $\Gamma$ over $\overline{\mathbb{F}}_q$ (not necessarily distinct) such that:
\begin{equation*}
V \otimes_{\mathbb{F}_q} \overline{\mathbb{F}}_q \cong \bigoplus_{i=1}^k V'_i,
\end{equation*}
where $V'_i = V_i \oplus \sigma(V_i) \oplus \dots \oplus \sigma^{d_i-1}(V_i)$ and $d_i$ is the least positive integer such that $\sigma^{d_i}(V_i) \cong V_i$. But if $V \otimes_{\mathbb{F}_q} \overline{\mathbb{F}}_q \cong W \otimes_{\mathbb{F}_q} \overline{\mathbb{F}}_q$, then the same summands $V'_i$ occur in $V$ and $W$ the same number of times, hence $V$ and $W$ are isomorphic over $\mathbb{F}_q$, i.e. $\rho_1,\rho_2$ are conjugate.
\end{proof}

By this proposition, two representations $\rho_1, \rho_2 \colon \Gamma \to GL_n(\mathbb{F}_q)$ are conjugate over $\overline{\mathbb{F}}_q$ if and only if they are conjugate over $\mathbb{F}_q$, i.e. if and only if they are isomorphic as representations over $\mathbb{F}_q$. Similarly, $\rho_1$ and $\rho_2$ are special conjugate over $\overline{\mathbb{F}}_q$ if and only if they are conjugate over $\mathbb{F}_q$, i.e. if and only if they are isomorphic as representations over $\mathbb{F}_q$. Putting together the results of this section with Katz's theorem, we have:
\begin{theorem}\label{thmcountingred}
Suppose there is a polynomial $A(t) \in \mathbb{\mathbb{C}}[t]$ and a positive integer $N$ such that for every finite field $\mathbb{F}_q$ of order $q$ with $q = 1 \; ({\rm mod} \; N)$, the number of isomorphism classes of $n$-dimensional reductive representations over $\mathbb{F}_q$ (resp. $n$-dimensional reductive representations over $\mathbb{F}_q$ with trivial determinant) equals $A(q)$. Then $A(q)$ is the $E$-polynomial of the complex character variety $Rep( \Gamma , GL_n(\mathbb{C}))$ (resp. $Rep(\Gamma , SL_n(\mathbb{C}))$). 
\end{theorem}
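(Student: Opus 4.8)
The plan is to assemble the identifications built up over the course of this section and then invoke Katz's theorem; the essential point is to recognise that the hypothesised polynomial $A$ is exactly the counting polynomial of a spreading out of the character variety.

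First I would establish that, for every finite field $\mathbb{F}_q$ with $q \equiv 1 \pmod N$, the number of $\mathbb{F}_q$-points of $Y = Y_\mathbb{Z}$ equals the number of isomorphism classes of $n$-dimensional reductive representations of $\Gamma$ over $\mathbb{F}_q$ (resp. with trivial determinant). This is immediate from the results already in hand: by Proposition \ref{propfqbarpts} the $\overline{\mathbb{F}}_q$-points of $Y$ are the isomorphism classes of reductive representations over $\overline{\mathbb{F}}_q$; the $\mathbb{F}_q$-points are the fixed points of the arithmetic Frobenius $\sigma$, that is those $\rho$ with $\sigma \circ \rho$ conjugate to $\rho$; Lemma \ref{lemfixrep} shows that each such fixed point is represented by a homomorphism valued in $GL_n(\mathbb{F}_q)$ (resp. $SL_n(\mathbb{F}_q)$); and Proposition \ref{propcompare} shows that two such $\mathbb{F}_q$-valued reductive representations are conjugate over $\overline{\mathbb{F}}_q$ precisely when they are isomorphic over $\mathbb{F}_q$. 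Hence the Frobenius-fixed points are in bijection with the isomorphism classes of reductive representations over $\mathbb{F}_q$, as desired.

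Next I would translate this into a polynomial-count statement for the spreading out $Y_R$ of $Y_\mathbb{C} = Rep(\Gamma, G(\mathbb{C}))$ constructed in \S\ref{seccv}. By the computation in \S\ref{secfq} the $\mathbb{F}_q$-points of $Y_R$ coincide with those of $Y_\mathbb{Z}$, and by Lemma \ref{lemq1} a homomorphism $\phi \colon R \to \mathbb{F}_q$ exists if and only if $q \equiv 1 \pmod N$. Thus the hypothesis — that the number of isomorphism classes of reductive representations equals $A(q)$ for all $q \equiv 1 \pmod N$ — says precisely that for every homomorphism $\phi \colon R \to \mathbb{F}_q$ the number of $\mathbb{F}_q$-points of $Y_R$ is $A(q)$. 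This is exactly the statement that $Y_\mathbb{C}$ has polynomial count with counting polynomial $A$. Since a counting polynomial necessarily lies in $\mathbb{Z}[t]$ by \cite[Section 6]{harv}, and it agrees with $A$ at the infinitely many integers $q \equiv 1 \pmod N$, the two polynomials coincide and in particular $A \in \mathbb{Z}[t]$.

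Finally I would invoke Katz's theorem (Theorem \ref{thmkatz}), which then yields $E_{Y_\mathbb{C}}(u,v) = A(uv)$, so that $E_{Y_\mathbb{C}}(q) = A(q)$; the $GL_n$ and $SL_n$ cases run identically. I do not expect a serious obstacle, since the substantive content was already carried out in Propositions \ref{propfqbarpts} and \ref{propcompare} and Lemma \ref{lemfixrep}. The one point requiring care is that the spreading out lives over $R = \mathbb{Z}[1/N, \zeta_N]$ rather than over $\mathbb{Z}$, so the polynomial-count condition is only tested on fields with $q \equiv 1 \pmod N$; Lemma \ref{lemq1} is exactly what guarantees that this restriction matches the hypothesis and is compatible with the formulation of Katz's theorem.
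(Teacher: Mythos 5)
Your proposal is correct and follows exactly the paper's own (implicit) argument: the paper states this theorem as the culmination of Section \ref{secfqpts}, ``putting together the results of this section with Katz's theorem,'' which is precisely your assembly of Proposition \ref{propfqbarpts}, Lemma \ref{lemfixrep}, Proposition \ref{propcompare}, the identification of $\mathbb{F}_q$-points of $Y_R$ with those of $Y_\mathbb{Z}$, Lemma \ref{lemq1}, and Theorem \ref{thmkatz}. Your added remark that the counting polynomial lies in $\mathbb{Z}[t]$ and agrees with $A$ at infinitely many integers is a worthwhile explicit touch, but the route is the same as the paper's.
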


\section{Counting representations}\label{seccounting}

Our goal is to compute for $n=2,3$, the number of isomorphism classes of reductive representations of $\Gamma$ into $GL_n(\mathbb{F}_q)$ or $SL_n(\mathbb{F}_q)$, where by isomorphism class, we mean conjugate under the action of $GL_n(\mathbb{F}_q)$. This can be worked out from the number of absolutely irreducible representations, so our first task is to compute this. We will indirectly compute the number of absolutely irreducible representations by first computing the total number of representations and then subtracting away all representations which are not absolutely irreducible.\\

To explain our strategy further, we introduce some notation. Let $\mathcal{X}_n$ denote the set of all isomorphism classes of representations of $\Gamma$ into $GL_n(\mathbb{F}_q)$ and $\widetilde{\mathcal{X}}_n \subseteq \mathcal{X}_n$ the set of such representations with trivial determinant. Let $\mathcal{X}_n^{\rm red}$ (resp. $\widetilde{\mathcal{X}}_n^{\rm red}$) denote the subset of $\mathcal{X}_n$ (resp. $\widetilde{\mathcal{X}}_n$) consisting of reductive representations, similarly write $\mathcal{X}_n^{\rm nr}, \widetilde{\mathcal{X}}_n^{\rm nr}$ for non-reductive representations and $\mathcal{X}_n^{\rm ai}, \widetilde{\mathcal{X}}_n^{\rm ai}$ for absolutely irreducible representations. We will stratify $\mathcal{X}_n$ into a disjoint union of subsets $\mathcal{X}_n^{(i)}$ in such a way that: 
\begin{enumerate}
\item[(1)] each $\mathcal{X}_n^{(i)}$ will consist entirely of reductive representations or entirely of non-reductive representations, and \item[(2)] for every $\rho \in \mathcal{X}_n^{(i)}$, the size of the stabiliser of $\rho$ under the $PGL_n(\mathbb{F}_q)$-action by conjugation depends only on $i$ and will be denoted by $s_i$. 
\end{enumerate}
We get an induced stratification of $\widetilde{\mathcal{X}}_n$ by setting $\widetilde{\mathcal{X}}_n^{(i)} = \mathcal{X}_n^{(i)} \cap \widetilde{\mathcal{X}}_n$.\\

Let $a$ be the total number of strata, so $i$ runs from $1$ to $a$. Set $I = \{ 1 , 2 , \dots , a \}$. Let $R \subseteq I$ be the set of indices $i \in I$ for which $\mathcal{X}_n^{(i)} \subseteq \mathcal{X}_n^{\rm red}$ and $N \subseteq I$ the indices $i \in I$ with $\mathcal{X}_n^{(i)} \subseteq \mathcal{X}_n^{\rm nr}$, so $I$ is the disjoint union of $R$ and $N$. We will choose the strata in such a way that $\mathcal{X}_n^{(1)} = \mathcal{X}_n^{\rm ai}$, hence $s_1 = 1$ and $1 \in R$. We write $A_{GL_n}(q) = |\mathcal{X}_n^{\rm red}|$ and $A_{SL_n}(q) = |\widetilde{\mathcal{X}}_n^{\rm red}|$ for the number of reductive representations into $GL_n(\mathbb{F}_q)$ and $SL_n(\mathbb{F}_q)$, so that:
\begin{equation}\label{equred}
\begin{aligned}
A_{GL_n}(q) &= \sum_{ i \in R } | \mathcal{X}^{(i)}_n| = |\mathcal{X}^{\rm ai}_n| +\sum_{\substack{ i \in R \\ i \neq 1}} | \mathcal{X}^{(i)}_n| , \\
A_{SL_n}(q) &= \sum_{ i \in R } | \widetilde{\mathcal{X}}^{(i)}_n| = |\widetilde{\mathcal{X}}^{\rm ai}_n| +\sum_{\substack{ i \in R \\ i \neq 1}} | \widetilde{\mathcal{X}}^{(i)}_n|.
\end{aligned}
\end{equation}
On the other hand, the orbit-stabiliser theorem gives:
\begin{equation*}
\begin{aligned}
\frac{ | Hom(\Gamma , GL_n(\mathbb{F}_q)) |}{| PGL_n(\mathbb{F}_q)|} &= \sum_{i \in R} \frac{ |\mathcal{X}^{(i)}_n|}{s_i} + \sum_{i \in N} \frac{ |\mathcal{X}^{(i)}_n|}{s_i} =  |\mathcal{X}^{\rm ai}_n| + \sum_{\substack{ i \in R \\ i \neq 1}} \frac{ |\mathcal{X}^{(i)}_n|}{s_i} + \sum_{i \in N} \frac{ |\mathcal{X}^{(i)}_n|}{s_i}, \\
\frac{ | Hom(\Gamma , SL_n(\mathbb{F}_q)) |}{| PGL_n(\mathbb{F}_q)|} &= \sum_{i \in R} \frac{ |\widetilde{\mathcal{X}}^{(i)}_n|}{s_i} + \sum_{i \in N} \frac{ |\widetilde{\mathcal{X}}^{(i)}_n|}{s_i} = |\widetilde{\mathcal{X}}^{\rm ai}_n| + \sum_{\substack{ i \in R \\ i \neq 1}} \frac{ |\widetilde{\mathcal{X}}^{(i)}_n|}{s_i} + \sum_{i \in N} \frac{ |\widetilde{\mathcal{X}}^{(i)}_n|}{s_i}.
\end{aligned}
\end{equation*}
Re-arranging gives:
\begin{equation}\label{equai}
\begin{aligned}
|\mathcal{X}^{\rm ai}_n| &= \frac{ | Hom(\Gamma , GL_n(\mathbb{F}_q)) |}{| PGL_n(\mathbb{F}_q)|} - \sum_{\substack{ i \in R \\ i \neq 1}} \frac{ |\mathcal{X}^{(i)}_n|}{s_i} - \sum_{i \in N} \frac{ |\mathcal{X}^{(i)}_n|}{s_i}, \\
|\widetilde{\mathcal{X}}^{\rm ai}_n| &= \frac{ | Hom(\Gamma , SL_n(\mathbb{F}_q)) |}{| PGL_n(\mathbb{F}_q)|} -  \sum_{\substack{ i \in R \\ i \neq 1}} \frac{ |\widetilde{\mathcal{X}}^{(i)}_n|}{s_i} - \sum_{i \in N} \frac{ |\widetilde{\mathcal{X}}^{(i)}_n|}{s_i}.
\end{aligned}
\end{equation}
These expression will be useful as they give the number of absolutely irreducible representations in terms of the total number of homomorphisms of $\Gamma$ into $GL_n(\mathbb{F}_q)$ or $SL_n(\mathbb{F}_q)$ minus the remaining strata, which can be expressed in terms of lower rank representations. Substituting (\ref{equai}) into (\ref{equred}), we have:
\begin{equation}\label{equcounting}
\begin{aligned}
A_{GL_n}(q) &=  \frac{ | Hom(\Gamma , GL_n(\mathbb{F}_q)) |}{| PGL_n(\mathbb{F}_q)|} +\sum_{\substack{ i \in R \\ i \neq 1}} \left( 1 - \frac{1}{s_i} \right)
|\mathcal{X}^{(i)}_n| - \sum_{i \in N} \frac{1}{s_i}|\mathcal{X}^{(i)}_n|, \\
A_{SL_n}(q) &= \frac{ | Hom(\Gamma , SL_n(\mathbb{F}_q)) |}{| PGL_n(\mathbb{F}_q)|} +\sum_{\substack{ i \in R \\ i \neq 1}} \left( 1 - \frac{1}{s_i} \right) |\widetilde{\mathcal{X}}^{(i)}_n| - \sum_{i \in N} \frac{1}{s_i}|\widetilde{\mathcal{X}}^{(i)}_n|.
\end{aligned}
\end{equation}
The important feature of these expressions is that the absolutely irreducible representations do not appear on the right hand side.

\subsection{Case of $GL_2$ and $SL_2$}\label{seccaseofgl2sl2}

We restrict ourselves to odd $q$. For rank $2$ representations our stratification will consist of $6$ strata: $R = \{1,2,3,4\}, N = \{5,6\}$ as we describe below. We will also need the following notation:
\begin{itemize}
\item{For any integer $j \ge 1$, let $m_j$ be the number of homomorphisms $\Gamma \to \mathbb{Z}_j$.}
\item{Let $A$ be a $1$-dimensional representation of $\Gamma$ over $\mathbb{F}_q$. We let $b^j_A$ denote the dimension of the group cohomology $H^j(\Gamma , A)$ over $\mathbb{F}_q$. When $A$ is the trivial representation, we write $b^j$ in place of $b^j_A$.}
\item{For any integer $m \ge 0$, let $[m]_q = (q^m-1)/(q-1)$.}
\end{itemize}

{\bf Strata $\mathcal{X}_2^{(1)}, \widetilde{\mathcal{X}}_2^{(1)}$.} These are the absolutely irreducible representations.\\

{\bf Strata $\mathcal{X}_2^{(2)}, \widetilde{\mathcal{X}}_2^{(2)}$.} These are the irreducible, but not absolutely irreducible representations. Thus they have the form $A \oplus \sigma(A)$, where $A$ corresponds to a homomorphism $\rho_A : \Gamma \to \mathbb{F}_{q^2}^\times$ and $\sigma(A) \ncong A$. Thus $|\mathcal{X}_2^{(2)}| = (m_{q^2-1} - m_{q-1})/2$. For $\widetilde{\mathcal{X}}_2^{(2)}$, note that the trivial determinant condition gives $A \sigma(A) = 1$, hence $\rho_A$ is valued in $\mu_{q+1} = \{ x \in \mathbb{F}_{q^2}^\times \; | \; x^{q+1} = 1 \}$. Then since $\mu_{q+1}$ is cyclic of order $q+1$ and $\mu_{q+1} \cap \mathbb{F}_q^\times = \{ \pm 1\}$, we find that $|\widetilde{\mathcal{X}}_2^{(2)}| = (m_{q+1}-m_2)/2$. The stabiliser of $A \oplus \sigma(A)$ in $GL_2(\mathbb{F}_q)$ consists of diagonal matrices of the form $diag(x , \sigma(x))$, where $x \in \mathbb{F}_{q^2}^\times$. Hence $s_2 = q+1$.\\

{\bf Strata $\mathcal{X}_2^{(3)}, \widetilde{\mathcal{X}}_2^{(3)}$.} These are of the form $A \oplus B$, where $A,B$ are distinct rank $1$ representations over $\mathbb{F}_q$. Hence $|\mathcal{X}_2^{(3)}| = m_{q-1}(m_{q-1}-1)/2$, $|\widetilde{\mathcal{X}}_2^{(3)}| = (m_{q-1}-m_2)/2$, $s_3 = q-1$.\\

{\bf Strata $\mathcal{X}_2^{(4)}, \widetilde{\mathcal{X}}_2^{(4)}$.} These are of the form $A \oplus A$, where $A$ is a rank $1$ representation over $\mathbb{F}_q$. Hence $|\mathcal{X}_2^{(4)}| = m_{q-1}$, $|\widetilde{\mathcal{X}}_2^{(4)}| = m_2$, $s_4 = |PGL_2(\mathbb{F}_q)| = q^3-q$.\\

{\bf Strata $\mathcal{X}_2^{(5)}, \widetilde{\mathcal{X}}_2^{(5)}$.} These are non-trivial extensions $A \to E \to B $, where $A,B$ are distinct rank $1$ representations over $\mathbb{F}_q$. For fixed $A,B$, such representations correspond to elements in the projectivisation of $H^1(\Gamma , B^* \otimes A )$. Noting that $A \ncong B$ if and only if $B^* \otimes A \ncong 1$, we get $|\mathcal{X}_2^{(5)}| = m_{q-1} \sum_{\{A \neq 1\}} [b^1_A]_q$, where the sum is over the non-trivial rank $1$ representations. Similarly $|\widetilde{\mathcal{X}}_2^{(5)}| = \sum_{ \{ A | A^2 \neq 1 \} } [b^1_{A^2}]_q$, where the sum is over the non-trivial rank $1$ representations $A$ with $A^2 \ncong 1$. We also find that $s_5 = 1$.\\

{\bf Strata $\mathcal{X}_2^{(6)}, \widetilde{\mathcal{X}}_2^{(6)}$.} These are non-trivial extensions $A \to E \to A$, where $A$ is a rank $1$ representation over $\mathbb{F}_q$. Using similar reasoning as above we get $|\mathcal{X}_2^{(6)}| = m_{q-1}[b^1]_q$, $|\widetilde{\mathcal{X}}_2^{(6)}| = m_2 [b^1]_q$, $s_6 = q$.\\

\begin{table}
\begin{equation*}
\renewcommand{\arraystretch}{1.4}
\begin{tabular}{|c|c|c|c|}
\hline
$i$ & $|\mathcal{X}_2^{(i)}|$ & $|\widetilde{\mathcal{X}}_2^{(i)}|$ & $s_i$ \\
\hline
$2$ & $\frac{1}{2}(m_{q^2-1} - m_{q-1})$ & $\frac{1}{2}(m_{q+1}-m_2)$ & $(q+1)$ \\
$3$ & $\frac{1}{2}m_{q-1}(m_{q-1}-1)$ & $\frac{1}{2}(m_{q-1}-m_2)$ & $(q-1)$ \\
$4$ & $m_{q-1}$ & $m_2$ & $(q^3-q)$ \\
$5$ & $m_{q-1} \sum_{\{A \neq 1\}} [b^1_A]_q$ & $\sum_{ \{ A | A^2 \neq 1 \} } [b^1_{A^2}]_q$ & $1$ \\
$6$ & $m_{q-1}[b^1]_q$ & $m_2 [b^1]_q$ & $q$ \\
\hline
\end{tabular}
\end{equation*}
\caption{Sizes of strata and their stabilisers in the rank $2$ case.}\label{figrank2}
\end{table}

Our calculations are summarised in Table \ref{figrank2}. Putting all of this into Equation (\ref{equcounting}) and simplifying gives the following theorem.

\begin{theorem}\label{thmcountingredrank2}
Let $q$ be odd. Then $A_{GL_2}(q)$, $A_{SL_2}(q)$ are given by:
\begin{equation*}
\begin{aligned}
A_{GL_2}(q) &= \frac{ | Hom(\Gamma , GL_2(\mathbb{F}_q)) |}{| PGL_2(\mathbb{F}_q)|} + \left( 1 - \frac{1}{q+1} \right) \frac{m_{q^2-1}}{2} + \left( 1 - \frac{1}{q-1} \right) \frac{m^2_{q-1}}{2} \\
& \; \; \; \; \;  -m_{q-1}[ b^1 - 1]_q - m_{q-1} \sum_{ \{ A | A \neq 1 \} } [b^1_A]_q.
\end{aligned}
\end{equation*}

\begin{equation*}
\begin{aligned}
A_{SL_2}(q) &= \frac{ | Hom(\Gamma , SL_2(\mathbb{F}_q)) |}{| SL_2(\mathbb{F}_q)|} + \left( 1 - \frac{1}{q+1} \right) \frac{m_{q+1}}{2} + \left( 1 - \frac{1}{q-1} \right) \frac{m_{q-1}}{2} \\
& \; \; \; \; \;  -m_{2}[ b^1 - 1]_q - \sum_{ \{ A | A^2 \neq 1 \} } [b^1_{A^2}]_q.
\end{aligned}
\end{equation*}
\end{theorem}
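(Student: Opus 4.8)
The plan is to substitute the stratum sizes and stabiliser orders recorded in Table~\ref{figrank2} directly into the master formula~(\ref{equcounting}), specialised to $n=2$ with $R=\{1,2,3,4\}$ and $N=\{5,6\}$, and then to simplify. Since $1-\frac{1}{s_2}=\frac{q}{q+1}$, $1-\frac{1}{s_3}=\frac{q-2}{q-1}$, $1-\frac{1}{s_4}=\frac{q^3-q-1}{q^3-q}$ and $\frac{1}{s_5}=1$, $\frac{1}{s_6}=\frac1q$, this yields in the general linear case
\begin{align*}
A_{GL_2}(q) &= \frac{|Hom(\Gamma,GL_2(\mathbb{F}_q))|}{|PGL_2(\mathbb{F}_q)|} + \tfrac{q}{q+1}\cdot\tfrac12(m_{q^2-1}-m_{q-1}) + \tfrac{q-2}{q-1}\cdot\tfrac12 m_{q-1}(m_{q-1}-1) \\
&\quad + \tfrac{q^3-q-1}{q^3-q}\,m_{q-1} - m_{q-1}\sum_{\{A\neq 1\}}[b^1_A]_q - \tfrac1q m_{q-1}[b^1]_q,
\end{align*}
and the analogous expression for $A_{SL_2}(q)$ obtained by replacing each $|\mathcal{X}_2^{(i)}|$ by $|\widetilde{\mathcal{X}}_2^{(i)}|$. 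I would also record at this point that $|PGL_2(\mathbb{F}_q)|=|SL_2(\mathbb{F}_q)|=q^3-q$, which is why the denominator in the stated $SL_2$ formula may equivalently be written as $|SL_2(\mathbb{F}_q)|$.

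Next I would match terms against the target. The stratum~$2$ contribution supplies the term $\left(1-\frac{1}{q+1}\right)\frac{m_{q^2-1}}{2}$ (resp.\ with $m_{q+1}$), the stratum~$3$ contribution supplies $\left(1-\frac{1}{q-1}\right)\frac{m_{q-1}^2}{2}$ (resp.\ $\frac{m_{q-1}}{2}$), and stratum~$5$ supplies exactly the sum $-m_{q-1}\sum_{\{A\neq1\}}[b^1_A]_q$ (resp.\ $-\sum_{\{A|A^2\neq1\}}[b^1_{A^2}]_q$). Everything that is left over is proportional to $m_{q-1}$ in the $GL_2$ case and to $m_2$ in the $SL_2$ case: namely the leftover $-m_{q-1}$ (resp.\ $-m_2$) piece of stratum~$2$, the leftover $-m_{q-1}$ (resp.\ $-m_2$) piece of stratum~$3$, the whole of stratum~$4$, and stratum~$6$. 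Thus it remains only to show that these collect into the single term $-m_{q-1}[b^1-1]_q$ (resp.\ $-m_2[b^1-1]_q$).

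This reduces to two elementary identities. First, from $[m]_q=(q^m-1)/(q-1)$ one has $[b^1]_q = q[b^1-1]_q+1$, so $-\frac1q[b^1]_q = -[b^1-1]_q-\frac1q$. Substituting this, the leftover coefficient of $m_{q-1}$ (resp.\ $m_2$) becomes $-[b^1-1]_q$ plus the purely rational quantity
\begin{equation*}
-\frac{q}{2(q+1)}-\frac{q-2}{2(q-1)}+\frac{q^3-q-1}{q^3-q}-\frac1q,
\end{equation*}
and the second identity is that this last expression vanishes. I would verify it by clearing denominators over the common denominator $2q(q-1)(q+1)=2(q^3-q)$ and checking that the resulting numerator $-q^2(q-1)-(q-2)q(q+1)+2(q^3-q-1)-2(q^2-1)$ is identically zero as a polynomial in $q$. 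Because both the collection of leftover terms and this rational identity are formally identical in the two cases (only $m_{q-1}$ versus $m_2$ changes), a single computation settles both formulas simultaneously.

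The computation is entirely mechanical, so there is no conceptual obstacle; the only place to exercise care is the bookkeeping in the second step, namely cleanly separating the terms that are quadratic in $m_{q-1}$ or involve $m_{q^2-1}$, $m_{q+1}$ from those that are linear in $m_{q-1}$ (resp.\ $m_2$), and tracking signs, so that the residual linear coefficient is isolated correctly before the two identities are applied.
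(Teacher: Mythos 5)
Your proposal is correct and follows exactly the paper's route: the paper's entire proof consists of substituting the stratum sizes and stabilisers from Table \ref{figrank2} into Equation (\ref{equcounting}) and simplifying, which is precisely what you do, only with the algebra (the identity $[b^1]_q = q[b^1-1]_q+1$ and the vanishing of the rational expression over the common denominator $2(q^3-q)$) written out explicitly. Your observation that $|PGL_2(\mathbb{F}_q)|=|SL_2(\mathbb{F}_q)|=q^3-q$, and that the leftover terms in the two cases differ only by $m_{q-1}$ versus $m_2$ so a single computation settles both, is a clean and accurate account of the simplification the paper leaves implicit.
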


\subsection{Case of $GL_3$ and $SL_3$}\label{seccasegl3sl3}
For rank $3$ representations, the calculation of the sizes of the strata is already quite involved and the details depend considerably on the particular choice of group $\Gamma$. As such we will restrict ourselves to the case that $\Gamma = \pi_1(\Sigma_g)$ is the fundamental group of a compact oriented surface of genus $g$. We will also assume that $q = 1 \; ({\rm mod} \; 6)$. Our chosen stratification will consist of $30$ strata: $R =\{ 1,2 , \dots , 7\}, N = \{8,9, \dots , 30\}$ described below. We will also make use of the following lemma.
\begin{lemma}\label{lemcountsl3}
Let $X = \{ (A,B,C) \in Hom(\Gamma , \mathbb{F}_q^\times) \; | \; ABC = 1 \text{ and } $A,B,C$ \text{ are distinct } \}$. Then $|X| = m_{q-1}^2 -3m_{q-1} +2m_3$.
\end{lemma}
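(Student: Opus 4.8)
The plan is to count $X$ by inclusion–exclusion, starting from the full set of triples $(A,B,C)$ of $1$-dimensional representations satisfying $ABC=1$ and then removing those in which two or more of the factors coincide. The starting observation is that $Hom(\Gamma,\mathbb{F}_q^\times)$ has exactly $m_{q-1}$ elements, since $\mathbb{F}_q^\times$ is cyclic of order $q-1$ and so $Hom(\Gamma,\mathbb{F}_q^\times)\cong Hom(\Gamma,\mathbb{Z}_{q-1})$. In a triple with $ABC=1$ the representations $A,B$ may be chosen freely and then $C=(AB)^{-1}$ is determined, so the set $T$ of all triples with $ABC=1$ has $|T|=m_{q-1}^2$.

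Next I would introduce the three ``coincidence'' subsets of $T$, namely $E_{AB}=\{A=B\}$, $E_{BC}=\{B=C\}$ and $E_{AC}=\{A=C\}$, so that $X=T\setminus(E_{AB}\cup E_{BC}\cup E_{AC})$. For each single coincidence, the relation $ABC=1$ collapses the triple to a one-parameter family: for instance $A=B$ forces $C=A^{-2}$, leaving $A$ free, and symmetrically in the other two cases. Hence $|E_{AB}|=|E_{BC}|=|E_{AC}|=m_{q-1}$.

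I would then treat the intersections. Any two of the conditions $A=B$, $B=C$, $A=C$ together force $A=B=C$, and combined with $ABC=1$ this becomes $A^3=1$; equivalently $A$ lands in the subgroup of cube roots of unity in $\mathbb{F}_q^\times$. Because we assume $q\equiv 1\ ({\rm mod}\ 6)$, this subgroup is cyclic of order $3$, so such $A$ are counted by $Hom(\Gamma,\mathbb{Z}_3)$, giving $m_3$. Thus every pairwise intersection, as well as the triple intersection $E_{AB}\cap E_{BC}\cap E_{AC}$, has size $m_3$.

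Assembling via inclusion–exclusion,
\begin{equation*}
|E_{AB}\cup E_{BC}\cup E_{AC}| = 3m_{q-1} - 3m_3 + m_3 = 3m_{q-1} - 2m_3,
\end{equation*}
and therefore $|X| = m_{q-1}^2 - 3m_{q-1} + 2m_3$, as claimed. There is no real obstacle here; the only points requiring care are the two structural identifications, that $|Hom(\Gamma,\mathbb{F}_q^\times)|=m_{q-1}$ (cyclicity of $\mathbb{F}_q^\times$) and that the cube-root subgroup is genuinely $\mathbb{Z}_3$ under the hypothesis $q\equiv 1\ ({\rm mod}\ 6)$, together with the combinatorial fact that any two of the coincidence conditions already imply the third so that all the intersections coincide.
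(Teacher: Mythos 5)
Your proof is correct and is essentially the same as the paper's: both count the triples with $ABC=1$ (giving $m_{q-1}^2$), identify the three coincidence loci of size $m_{q-1}$ whose pairwise (and triple) intersections all equal the locus $A=B=C$ of size $m_3$, and conclude by inclusion--exclusion. The only cosmetic difference is that the paper writes the inclusion--exclusion directly as $|Y|-3m_{q-1}+2m_3$ rather than first computing the size of the union.
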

\begin{proof}
Let $Y = \{ (A,B,C) \in Hom(\Gamma , \mathbb{F}_q^\times) \; | \; ABC = 1 \; \}$, $Y_{AB} = \{ (A,B,C) \in Y \; | \; A=B \; \}$, $Y_{BC} = \{ (A,B,C) \in Y \; | \; B=C \; \}$, $Y_{AC} = \{ (A,B,C) \in Y \; | \; A=C \; \}$, $Y_{ABC} = \{ (A,B,C) \in Y \; | \; A=B=C \; \}$. Note that $Y_{AB} \cap Y_{BC} = Y_{AB} \cap Y_{AC} = Y_{BC} \cap Y_{AC} = Y_{ABC}$. It is easy to see that $|Y| = m_{q-1}^2$, $|Y_{AB}| = |Y_{BC}| = |Y_{AC}| = m_{q-1}$ and $|Y_{ABC}| = m_3$. Thus
\begin{equation*}
|X| = |Y| - |Y_{AB}| - |Y_{BC}| - |Y_{AC}| + 2|Y_{ABC}| = m_{q-1}^2 - 3m_{q-1} + 2m_3.
\end{equation*}
\end{proof}

{\bf Strata $\mathcal{X}_3^{(1)}, \widetilde{\mathcal{X}}_3^{(1)}$.} The absolutely irreducible representations.\\

{\bf Strata $\mathcal{X}_3^{(2)}, \widetilde{\mathcal{X}}_3^{(2)}$.} The irreducible, but not absolutely irreducible representations. These have the form $A \oplus \sigma(A) \oplus \sigma^2(A)$, where $A$ is a rank $1$ representation corresponding to a homomorphism $\rho_A \colon \Gamma \to \mathbb{F}_{q^3}^\times$ such that $\sigma(A) \ncong A$. Thus $|\mathcal{X}_3^{(2)}| = ( m_{q^3-1} - m_{q-1} )/3$. To compute $|\widetilde{\mathcal{X}}_3^{(2)}|$, note that the condition $A \sigma(A) \sigma(A)^2 \cong 1$ is equivalent to $\rho_A$ taking values in $\mu_{1+q+q^2} = \{ x \in \mathbb{F}_{q^3}^\times \; | \; x^{1+q+q^2} = 1 \}$. Since we assume $q = 1 \; ({\rm mod} \; 6)$, we find $|\mu_{1+q+q^2} \cap \mathbb{F}_q | = 3$. So $|\widetilde{\mathcal{X}}_3^{(2)}| = ( m_{q^2+q+1} - m_3 )/3$. The stabiliser in $GL_3(\mathbb{F}_q)$ consists of diagonal matrices $diag(x,\sigma(x) , \sigma^2(x))$, where $x \in \mathbb{F}_{q^3}^\times$, hence $s_2 = q^2+q+1$.\\

{\bf Strata $\mathcal{X}_3^{(3)}, \widetilde{\mathcal{X}}_3^{(3)}$.} The reductive representations of the form $A_2 \oplus A_1$, where $A_2$ is an absolutely irreducible representation of rank $2$ and $A_1$ has rank $1$. Then $\mathcal{X}_3^{(3)} = m_{q-1}|\mathcal{X}_2^{(1)}|$, $\widetilde{\mathcal{X}}_3^{(3)} = |\mathcal{X}_2^{(1)}|$ and $s_3 = q-1$.\\

{\bf Strata $\mathcal{X}_3^{(4)}, \widetilde{\mathcal{X}}_3^{(4)}$.} The reductive representations of the form $A \oplus \sigma(A) \oplus B$, where $B$ is a rank $1$ representation over $\mathbb{F}_q$ and $A$ is a rank $1$ representation over $\mathbb{F}_{q^2}$ with $\sigma(A) \ncong A$. Thus $|\mathcal{X}_3^{(4)}| = ( m_{q^2-1} - m_{q-1} )m_{q-1}/2$, $|\widetilde{\mathcal{X}}_3^{(4)}| = (m_{q^2-1} - m_{q-1})/2$. The stabiliser in $GL_3(\mathbb{F}_q)$ consists of diagonal matrices of the form $diag(x,\sigma(x),y)$, where $x \in \mathbb{F}_{q^2}^\times$, $y \in \mathbb{F}_q^\times $, so $s_4 = q^2-1$.\\

{\bf Strata $\mathcal{X}_3^{(5)}, \widetilde{\mathcal{X}}_3^{(5)}$.} The reductive representations of the form $A \oplus B \oplus C$, where $A,B,C$ are distinct rank $1$ representations over $\mathbb{F}_q$. Thus $|\mathcal{X}_3^{(5)}| = \binom{m_{q-1}}{3}$, $|\widetilde{\mathcal{X}}_3^{(5)}| = (m_{q-1}^2 -3m_{q-1} +2m_3)/6$ (by Lemma \ref{lemcountsl3}), and $s_5 = (q-1)^2$.\\

{\bf Strata $\mathcal{X}_3^{(6)}, \widetilde{\mathcal{X}}_3^{(6)}$.} The reductive representations of the form $A \oplus A \oplus B$, where $A,B$ are distinct rank $1$ representations over $\mathbb{F}_q$. Thus $|\mathcal{X}_3^{(6)}| = m_{q-1}(m_{q-1}-1)$, $\widetilde{\mathcal{X}}_3^{(6)} = m_{q-1}-m_3$, $s_6 = |GL_2(\mathbb{F}_q)| = q(q+1)(q-1)^2$.\\

{\bf Strata $\mathcal{X}_3^{(7)}, \widetilde{\mathcal{X}}_3^{(7)}$.} The reductive representations of the form $A \oplus A \oplus A$, where $A$ is a rank $1$ representation over $\mathbb{F}_q$. Thus $|\mathcal{X}_3^{(7)}| = m_{q-1}$, $|\widetilde{\mathcal{X}}_3^{(7)}| = m_3$, $s_7 = |PGL_3(\mathbb{F}_q)| = q^3(q^2-1)(q^3-1)$.\\

The strata for $8 \le i \le 11$ are the reducible representations which contain exactly one proper, non-trivial invariant subspace.\\

{\bf Strata $\mathcal{X}_3^{(8)}, \widetilde{\mathcal{X}}_3^{(8)}$.} The non-trivial extensions $A_2 \to E \to A_1$, where $A_2$ is rank $2$ absolutely irreducible and $A_1$ has rank $1$. Note that $dim( H^1( \Gamma , Hom(A_1 , A_2) ) ) = -2\chi(\Sigma_g) = 4g-4$. Thus $|\mathcal{X}_3^{(8)}| = m_{q-1}|\mathcal{X}_2^{(1)}| [4g-4]_q$, $|\widetilde{\mathcal{X}}_3^{(8)}| = |\mathcal{X}_2^{(1)}|[4g-4]_q$, $s_8 = 1$.\\

{\bf Strata $\mathcal{X}_3^{(9)}, \widetilde{\mathcal{X}}_3^{(9)}$.} The non-trivial extensions $A_1 \to E \to A_2$, where $A_2$ is rank $2$ absolutely irreducible and $A_1$ has rank $1$. These are the dual representations to $\mathcal{X}_3^{(8)}, \widetilde{\mathcal{X}}_3^{(8)}$. Thus $|\mathcal{X}_3^{(9)}| = |\mathcal{X}_3^{(8)}|$, $|\widetilde{\mathcal{X}}_3^{(9)}| = |\widetilde{\mathcal{X}}_3^{(8)}|$ and $s_9 = s_8$.\\

{\bf Strata $\mathcal{X}_3^{(10)}, \widetilde{\mathcal{X}}_3^{(10)}$.} The non-trivial extensions $A_2 \to E \to A_1$, where $A_1$ has rank $1$ and $A_2 = A \oplus \sigma(A)$, where $A$ is a rank $1$ representation over $\mathbb{F}_{q^2}$ with $\sigma(A) \ncong A$. For a fixed $A_1,A$, these are classified by the projectivisation over $\mathbb{F}_{q^2}$ of the $\mathbb{F}_{q^2}$-vector space $H^1( \Gamma , Hom(A_1 , A))$ which has dimension $2g-2$. Thus $|\mathcal{X}_3^{(10)}| = \frac{1}{2} m_{q-1}( m_{q^2-1} - m_{q-1}) [2g-2]_{q^2}$, $|\widetilde{\mathcal{X}}_3^{(10)}| = \frac{1}{2}( m_{q^2-1} - m_{q-1}) [2g-2]_{q^2}$, $s_{11} = 1$.\\

{\bf Strata $\mathcal{X}_3^{(11)}, \widetilde{\mathcal{X}}_3^{(11)}$.}  The non-trivial extensions $A_1 \to E \to A_2$, where $A_1$ has rank $1$ and $A_2 = A \oplus \sigma(A)$, where $A$ is a rank $1$ representation over $\mathbb{F}_{q^2}$ with $\sigma(A) \ncong A$. These are the dual representations to $\mathcal{X}_3^{(10)}, \widetilde{\mathcal{X}}_3^{(10)}$.\\

The strata for $12 \le i \le 16$ are the decomposable, non-reductive representations.\\

{\bf Strata $\mathcal{X}_3^{(12)}, \widetilde{\mathcal{X}}_3^{(12)}$.} Representations $A \oplus E$, where $E$ is a non-trivial extension $B \to E \to C$ and $A,B,C$ are distinct rank $1$ representations. Thus $|\mathcal{X}_3^{(12)}| = m_{q-1}(m_{q-1}-1)(m_{q-1}-2)[2g-2]_q$, $|\widetilde{\mathcal{X}}_3^{(12)}| = (m_{q-1}^2 -3m_{q-1} +2m_3)[2g-2]_q$ (using Lemma \ref{lemcountsl3}), and $s_{12} = (q-1)$.\\

{\bf Strata $\mathcal{X}_3^{(13)}, \widetilde{\mathcal{X}}_3^{(13)}$.} Representations $A \oplus E$, where $E$ is a non-trivial extension $A \to E \to C$ and $A,C$ are distinct rank $1$ representations. Thus $|\mathcal{X}_3^{(13)}| = m_{q-1}(m_{q-1}-1)[2g-2]_q$, $|\widetilde{\mathcal{X}}_3^{(13)}| = (m_{q-1}-m_3)[2g-2]_q$, $s_{13} = q(q-1)$.\\

{\bf Strata $\mathcal{X}_3^{(14)}, \widetilde{\mathcal{X}}_3^{(14)}$.} Representations $A \oplus E$, where $E$ is a non-trivial extension $B \to E \to A$ and $A,B$ are distinct rank $1$ representations. These representations are the duals to $\mathcal{X}_3^{(14)}, \widetilde{\mathcal{X}}_3^{(14)}$.\\

{\bf Strata $\mathcal{X}_3^{(15)}, \widetilde{\mathcal{X}}_3^{(15)}$.} Representations $A \oplus E$, where $E$ is a non-trivial extension $B \to E \to B$ and $A,B$ are distinct rank $1$ representations. Thus $|\mathcal{X}_3^{(15)}| = m_{q-1}(m_{q-1}-1)[2g]_q$, $|\widetilde{\mathcal{X}}_3^{(15)}| = (m_{q-1}-m_3)[2g]_q$, $s_{15} = q(q-1)$.\\

{\bf Strata $\mathcal{X}_3^{(16)}, \widetilde{\mathcal{X}}_3^{(16)}$.} Representations $A \oplus E$, where $E$ is a non-trivial extension $A \to E \to A$ and $A$ is a rank $1$ representation. Thus $|\mathcal{X}_3^{(16)}| = m_{q-1}[2g]_q$, $|\widetilde{\mathcal{X}}_3^{(16)}| = m_3[2g]_q$. We now compute the stabiliser. Identify $A \oplus E$ with the vector space $\mathbb{F}_q^3$ equipped with an action of $\Gamma$. Let $(1,0,0)$ span $A \oplus 0$ and $(0,1,0) , (0,0,1)$ span $0 \oplus E$. Further suppose that $(0,1,0)$ spans the invariant subspace $A \subset E$. Using $Hom(A \oplus E , A \oplus E) = Hom(A,A) \oplus Hom(A,E) \oplus Hom(E,A) \oplus Hom(E,E)$, we see that any invariant endomorphism $\phi : A \oplus E \to A \oplus E$ has the form
\begin{equation*}
\phi = \left[\begin{matrix} a & 0 & b \\ c & d & e \\ 0 & 0 & d \end{matrix} \right]
\end{equation*}
for some $a,b,c,d,e \in \mathbb{F}_q$. The determinant of this matrix is $ad^2$, hence it is an isomorphism if and only if $a$ and $d$ are non-zero. Thus the stabiliser in $GL_3(\mathbb{F}_q)$ has order $q^3(q-1)^2$ and $s_{16} = q^3(q-1)$.\\

The strata for $17 \le i \le 20$ are the representations with at least two rank $1$ invariant subspaces and exactly one rank $2$ invariant subspace. It follows that the rank $2$ invariant subspace is decomposable.\\

{\bf Strata $\mathcal{X}_3^{(17)}, \widetilde{\mathcal{X}}_3^{(17)}$.} The non-trivial extensions $A \oplus B \to E \to C$, where $A,B,C$ are distinct rank $1$ representations, such that $A \to E/B \to C$ and $B \to E/A \to C$ do not split. Thus $|\mathcal{X}_3^{(17)}| = \frac{1}{2}m_{q-1}(m_{q-1}-1)(m_{q-1}-2)[2g-2]_q^2$, $|\widetilde{\mathcal{X}}_3^{(17)}| = \frac{1}{2}(m_{q-1}^2 - 3m_{q-1}+2m_3)[2g-2]_q^2$, $s_{17} = 1$.\\

{\bf Strata $\mathcal{X}_3^{(18)}, \widetilde{\mathcal{X}}_3^{(18)}$.} The non-trivial extensions $A \oplus A \to E \to C$, where $A,C$ are distinct rank $1$ representations, such that for every invariant rank $1$ subspace $\iota \colon A \to A \oplus A$, the induced sequence $(A \oplus A)/\iota(A) \to E/\iota(A) \to C$ does not split. For given representations $A,C$, such extensions correspond to classes $\xi \in H^1(\Gamma , Hom(C , A \oplus A))$ with the following property. Let $V = H^1(\Gamma , Hom(C,A))$, which is a $2g-2$-dimensional vector space over $\mathbb{F}_q$. Then $\xi \in V \oplus V \cong Hom( \mathbb{F}_q^2 , V)$. The condition on $\xi$ is that it is injective when viewed as a map $\xi \colon \mathbb{F}_q^2 \to V$. Two such $\xi$ define isomorphic extensions if and only if they lie in the same orbit of $GL_2(\mathbb{F}_q)$ acting on $Hom(\mathbb{F}_q^2 , V)$ through the standard action on $\mathbb{F}_q^2$. Thus the number of such extensions is $| Gr_q( 2 , 2g-2)|$, where $Gr_q(m,n)$ is the Grassmannian of $m$-dimensional subspaces of $\mathbb{F}_q^n$. This is given by $|Gr_q(2,2g-2)| = \frac{(q^{2g-2}-1)(q^{2g-3}-1)}{(q-1)(q^2-1)}$. Thus $|\mathcal{X}_3^{(18)}| = m_{q-1}(m_{q-1}-1)\frac{1}{(q+1)}[2g-2]_q[2g-3]_q$, $|\widetilde{\mathcal{X}}_3^{(18)}| = (m_{q-1}-m_3)\frac{1}{(q+1)}[2g-2]_q[2g-3]_q$, $s_{18} = 1$.\\

{\bf Strata $\mathcal{X}_3^{(19)}, \widetilde{\mathcal{X}}_3^{(19)}$.} The non-trivial extensions $A \oplus B \to E \to A$, where $A,B$ are distinct rank $1$ representations, such that $A \to E/B \to A$ and $B \to E/A \to A$ do not split. Thus $|\mathcal{X}_3^{(19)}| = m_{q-1}(m_{q-1}-1)[2g-2]_q[2g]_q$, $|\widetilde{\mathcal{X}}_3^{(19)}| = (m_{q-1}-m_3)[2g-2]_q[2g]_q$, $s_{19} = q$.\\

{\bf Strata $\mathcal{X}_3^{(20)}, \widetilde{\mathcal{X}}_3^{(20)}$.} The non-trivial extensions $A \oplus A \to E \to A$, where $A$ is a rank $1$ representation, such that for every invariant rank $1$ subspace $\iota \colon A \to A \oplus A$, the induced sequence $(A \oplus A)/\iota(A) \to E/\iota(A) \to A$ does not split. By the same reasoning as used for $\mathcal{X}_3^{(18)}$, we find that $|\mathcal{X}_3^{(20)}| = m_{q-1}\frac{1}{(q+1)}[2g]_q[2g-1]_q$, $|\widetilde{\mathcal{X}}_3^{(20)}| = m_3 \frac{1}{(q+1)}[2g]_q[2g-1]_q$, $s_{20} = q^2$.\\

The strata for $21 \le i \le 24$ are the representations $E$ with at least two rank $2$ invariant subspaces and exactly one rank $1$ invariant subspace $C$. It follows that $E/C$ is decomposable. These are the duals of the strata $17 \le i \le 20$.\\

{\bf Strata $\mathcal{X}_3^{(21)}, \widetilde{\mathcal{X}}_3^{(21)}$.} The extensions $C \to E \to A \oplus B$, where $A,B,C$ are distinct rank $1$ representations and for which the extension class in $H^1(\Gamma , Hom(A \oplus B , C))$ restricts to non-trivial classes in $H^1(\Gamma , Hom(A,C))$ and $H^1(\Gamma , Hom(B,C))$. These are the dual representations of $\mathcal{X}_3^{(17)}, \widetilde{\mathcal{X}}_3^{(17)}$.\\

{\bf Strata $\mathcal{X}_3^{(22)}, \widetilde{\mathcal{X}}_3^{(22)}$.} The extensions $C \to E \to A \oplus A$, where $A,C$ are distinct rank $1$ representations and for which the extension class in $H^1(\Gamma , Hom(A \oplus A , C))$ restricts to a non-trivial class in $H^1(\Gamma , Hom(A,C))$ for every invariant rank $1$ subspace $\iota : A \to A \oplus A$. These are the dual representations of $\mathcal{X}_3^{(18)}, \widetilde{\mathcal{X}}_3^{(18)}$.\\

{\bf Strata $\mathcal{X}_3^{(23)}, \widetilde{\mathcal{X}}_3^{(23)}$.} The extensions $A \to E \to A \oplus B$, where $A,B$ are distinct rank $1$ representations and for which the extension class in $H^1(\Gamma , Hom(A \oplus B , A))$ restricts to non-trivial classes in $H^1(\Gamma , Hom(A,A))$ and $H^1(\Gamma , Hom(B,A))$. These are the dual representations of $\mathcal{X}_3^{(19)}, \widetilde{\mathcal{X}}_3^{(19)}$.\\

{\bf Strata $\mathcal{X}_3^{(24)}, \widetilde{\mathcal{X}}_3^{(24)}$.} The extensions $A \to E \to A \oplus A$, where $A$ is a rank $1$ representation and for which the extension class in $H^1(\Gamma , Hom(A \oplus A , A))$ restricts to a non-trivial class in $H^1(\Gamma , Hom(A,A))$ for every invariant rank $1$ subspace $\iota \colon A \to A \oplus A$. These are the dual representations of $\mathcal{X}_3^{(20)}, \widetilde{\mathcal{X}}_3^{(20)}$.\\

The strata for $25 \le i \le 30$ are the indecomposable representations containing exactly one rank $1$ invariant subspace and exactly one rank $2$ invariant subspace. They are given by iterated non-trivial extensions.\\

{\bf Strata $\mathcal{X}_3^{(25)}, \widetilde{\mathcal{X}}_3^{(25)}$.} The non-trivial extensions $A \to E \to F$, where $F$ is a non-trivial extension $B \to F \to C$ and $A,B,C$ are distinct rank $1$ representations. Let $A,B,C$ be fixed. The number of isomorphism classes of non-trivial extensions $B \to F \to C$ is $[2g-2]_q$. Fix such an extension. An extension $A \to E \to F$ is given by an element $\xi \in H^1(\Gamma , Hom(F , A))$. The long exact sequence in cohomology associated to $Hom(C,A) \to Hom(F,A) \to Hom(B,A)$, together with the vanishing of $H^0(\Gamma , Hom(B,A))$ and $H^2(\Gamma , Hom(C,A)) \cong H^0(\Gamma , Hom(A,C))^*$, gives a short exact sequence:
\begin{equation*}
0 \to H^1(\Gamma , Hom(C,A)) \to H^1(\Gamma , Hom(F,A)) \to H^1(\Gamma , Hom(B,A)) \to 0.
\end{equation*}
Thus, up to isomorphism, the number of extensions $A \to E \to F$ which restrict to non-trivial classes in $H^1(\Gamma , Hom(B,A))$ is $[2g-2]_q q^{2g-2}$. Then $|\mathcal{X}_3^{(25)}| = m_{q-1}(m_{q-1}-1)(m_{q-1}-2)[2g-2]_q^2 q^{2g-2}$, $|\widetilde{\mathcal{X}}_3^{(25)}| = (m_{q-1}^2 - 3m_{q-1}+2m_3)[2g-2]_q^2 q^{2g-2}$, $s_{25} = 1$.\\

{\bf Strata $\mathcal{X}_3^{(26)}, \widetilde{\mathcal{X}}_3^{(26)}$.} The non-trivial extensions $A \to E \to F$, where $F$ is a non-trivial extension $A \to F \to C$ and $A,C$ are distinct rank $1$ representations. Let $A,C$ be fixed. The number of non-trivial extensions $A \to F \to C$ is $[2g-2]_q$. Fix such an extension. An extension $A \to E \to F$ is given by an element $\xi \in H^1(\Gamma , Hom(F,A))$. The long exact sequence in cohomology associated to $Hom(C,A) \to Hom(F,A) \to Hom(A,A)$ together with $H^0(\Gamma , Hom(A,A)) \cong \mathbb{F}_q$, $H^0(\Gamma , Hom(F,A)) = 0$, $H^2( \Gamma , Hom(C,A)) = 0$ gives a long exact sequence:
\begin{equation*}
0 \to \mathbb{F}_q \to H^1(\Gamma , Hom(C,A)) \to H^1(\Gamma , Hom(F,A)) \to H^1(\Gamma , Hom(A,A)) \to 0.
\end{equation*}
Thus, up to isomorphism, the number of extensions $A \to E \to F$ which restrict to non-trivial classes in $H^1(\Gamma , Hom(A,A))$ is $[2g]_q q^{2g-3}$. Then $|\mathcal{X}_3^{(26)}| = m_{q-1}(m_{q-1}-1)[2g-2]_q [2g]_q q^{2g-3}$, $|\widetilde{\mathcal{X}}_3^{(26)}| = (m_{q-1} - m_3)[2g-2]_q [2g]_q q^{2g-3}$, $s_{26} = 1$.\\

{\bf Strata $\mathcal{X}_3^{(27)}, \widetilde{\mathcal{X}}_3^{(27)}$.} The non-trivial extensions $A \to E \to F$, where $F$ is a non-trivial extension $B \to F \to B$ and $A,B$ are distinct rank $1$ representations. These are the dual representations of $\mathcal{X}_3^{(26)}, \widetilde{\mathcal{X}}_3^{(26)}$.\\ 

{\bf Strata $\mathcal{X}_3^{(28)}, \widetilde{\mathcal{X}}_3^{(28)}$.} The non-trivial extensions $A \to E \to F$, where $F$ is a non-trivial extension $B \to F \to A$ and $A,B$ are distinct rank $1$ representations. Let $A,B$ be fixed. The number of non-trivial extensions $B \to F \to A$ is $[2g-2]_q$. Fix such an extension. An extension $A \to E \to F$ is given by an element $\xi \in H^1(\Gamma , Hom(F,A))$. The long exact sequence in cohomology associated to $Hom(A,A) \to Hom(F,A) \to Hom(B,A)$ together with $H^0(\Gamma , Hom(B,A)) = 0$, $H^2(\Gamma , Hom(F,A)) \cong H^0(\Gamma , Hom(A,F))^* = 0$, $H^2( \Gamma , Hom(A,A)) \cong H^0(\Gamma , Hom(A,A))^* \cong \mathbb{F}_q$ gives a long exact sequence:

\begin{equation*}
0 \to H^1(\Gamma , Hom(A,A)) \to H^1(\Gamma , Hom(F,A)) \to H^1(\Gamma , Hom(B,A)) \to \mathbb{F}_q \to 0.
\end{equation*}

Thus, up to isomorphism, the number of extensions $A \to E \to F$ which restrict to non-trivial classes in $H^1(\Gamma , Hom(B,A))$ is $[2g-3]_q q^{2g}$. Then $|\mathcal{X}_3^{(28)}| = m_{q-1}(m_{q-1}-1)[2g-2]_q [2g-3]_q q^{2g}$, $|\widetilde{\mathcal{X}}_3^{(28)}| = (m_{q-1} - m_3)[2g-2]_q [2g-3]_q q^{2g}$, $s_{28} = q$.\\

{\bf Strata $\mathcal{X}_3^{(29)}, \widetilde{\mathcal{X}}_3^{(29)}$.} The non-trivial extensions $A \to E \to F$, where $F$ is a non-trivial extension $A \to F \to A$ and $A$ is a rank $1$ representation satisfying the following condition. Let $\beta \in H^1(\Gamma , Hom(A,A))$ be the extension class of $A \to F \to A$, let $\xi \in H^1(\Gamma , Hom(F,A))$ be the extension class of $A \to E \to F$ and let $\alpha \in H^1(\Gamma , Hom(A,A))$ be the restriction of $\xi$ under the inclusion $A \to F$. Then we require that $\alpha, \beta \in H^1(\Gamma , Hom(A,A))$ are not multiples of one another. Fix the representation $A$. The number of non-trivial extensions $A \to F \to A$ is given by $[2g]_q$. Fix an extension class $\beta \in H^1(\Gamma , Hom(A,A))$ representing such an extension $F$. The long exact sequence in cohomology associated to $Hom(A,A) \to Hom(F,A) \to Hom(A,A)$ takes the form:
\begin{equation}\label{equles}
0 \to \mathbb{F}_q \buildrel \beta \over \longrightarrow H^1(\Gamma , Hom(A,A) \to H^1(\Gamma , Hom(F,A)) \to H^1(\Gamma , Hom(A,A)) \buildrel \cup \beta \over \longrightarrow \mathbb{F}_q \to 0
\end{equation}
where $\cup \beta \colon H^1(\Gamma , Hom(A,A)) \to \mathbb{F}_q$ is the map sending a class $\alpha \in H^1(\Gamma , Hom(F,A))$ to the cup product $\alpha \cup \beta \in H^2(\Gamma , Hom(A,A)) \cong \mathbb{F}_q$. Let $V = H^1(\Gamma , Hom(A,A))$, which is a $2g$-dimensional vector space over $\mathbb{F}_q$ and $\beta$ is an element of $V$. The cup product defines an alternating bilinear form $V \otimes V \to \mathbb{F}_q$. The long exact sequence (\ref{equles}) gives a non-canonical identification of $H^1(\Gamma , Hom(F,A))$ with the set of pairs $\xi = ( \gamma , \alpha ) \in V \oplus V$ satisfying $\alpha \cup \beta = 0$, modulo the subspace spanned by $(\beta, 0)$. Under this identification the restriction map $H^1(\Gamma , Hom(F,A)) \to H^1(\Gamma , Hom(A,A))$ sends $(\gamma , \alpha )$ to $\alpha$. We seek to count the number of isomorphism classes of non-trivial extensions $A \to E \to F$ defined by extension classes $\xi = (\gamma , \alpha ) \in H^1(\Gamma , Hom(F,A))$. For this we note that two pairs $\xi_1, \xi_2$ define isomorphic extensions if and only if they lie in the same orbit of the natural action of $Aut(F)$ on $H^1(\Gamma , Hom(F,A))$, where $Aut(F)$ is the group of automorphisms of $F$. At the level of pairs $(\gamma , \alpha)$, the action of $Aut(F)$ is generated by rescaling $(\gamma , \alpha) \mapsto (c \gamma , c\alpha)$, $c \in \mathbb{F}_q^\times$ and shifts $(\gamma , \alpha) \mapsto (\gamma + t\alpha , \alpha)$, for $t \in \mathbb{F}_q$. It follows that the number of isomorphism classes of pairs $\xi = (\gamma , \alpha)$ with $\alpha$ not proportional to $\beta$ is $[2g-2]_q q^{2g-1}$. Thus $|\mathcal{X}_3^{(29)}| = m_{q-1}[2g]_q [2g-2]_q q^{2g-1}$, $|\widetilde{\mathcal{X}}_3^{(29)}| = m_3 [2g]_q [2g-2]_q q^{2g-1}$, $s_{29} = q$.\\

{\bf Strata $\mathcal{X}_3^{(30)}, \widetilde{\mathcal{X}}_3^{(30)}$.} This is the same as for $\mathcal{X}_3^{(29)}, \widetilde{\mathcal{X}}_3^{(29)}$, except we assume that $\alpha,\beta$ are proportional. It follows that for given $A$ and $\beta$, the number of isomorphism classes of pairs $\xi = (\gamma , \alpha)$ with $\alpha$ proportional to $\beta$ is $q^{2g-1}$. Thus $|\mathcal{X}_3^{(30)}| = m_{q-1}[2g]_q q^{2g-1}$, $|\widetilde{\mathcal{X}}_3^{(30)}| = m_3 [2g]_q q^{2g-1}$, $s_{30} = q^2$.\\

The calculations in this section are summarised in Table \ref{figrank3}.

\begin{table}
\begin{equation*}
\renewcommand{\arraystretch}{1.5}
\centerline{
\begin{tabular}{|c|c|c|c|}
\hline
$i$ & $|\mathcal{X}_3^{(i)}|$ & $|\widetilde{\mathcal{X}}_3^{(i)}|$ & $s_i$ \\
\hline
$2$ & $\frac{1}{3}( m_{q^3-1} - m_{q-1} )$ & $\frac{1}{3}( m_{q^2+q+1} - m_3 )$ & $q^2+q+1$ \\
$3$ & $m_{q-1}|\mathcal{X}_2^{(1)}|$ & $|\mathcal{X}_2^{(1)}|$ & $(q-1)$ \\
$4$ & $\frac{1}{2}( m_{q^2-1} - m_{q-1} )m_{q-1}$ & $\frac{1}{2}(m_{q^2-1} - m_{q-1})$ & $(q^2-1)$ \\
$5$ & $\binom{m_{q-1}}{3}$ & $\frac{1}{6}(m_{q-1}^2 -3m_{q-1} +2m_3)$ & $(q-1)^2$ \\
$6$ & $m_{q-1}(m_{q-1}-1)$ & $m_{q-1}-m_3$ & $q(q+1)(q-1)^2$ \\
$7$ & $m_{q-1}$ & $m_3$ & $q^3(q^2-1)(q^3-1)$ \\
$8,9$ & $m_{q-1}|\mathcal{X}_2^{(1)}| [4g-4]_q$ & $|\mathcal{X}_2^{(1)}|[4g-4]_q$ & $1$ \\
$10,11$ & $\frac{1}{2} m_{q-1}( m_{q^2-1} - m_{q-1}) [2g-2]_{q^2}$ & $\frac{1}{2}( m_{q^2-1} - m_{q-1}) [2g-2]_{q^2}$ & $1$ \\
$12$ & $m_{q-1}(m_{q-1}-1)(m_{q-1}-2)[2g-2]_q$ & $(m_{q-1}^2 -3m_{q-1} +2m_3)[2g-2]_q$ & $(q-1)$ \\
$13,14$ & $m_{q-1}(m_{q-1}-1)[2g-2]_q$ & $(m_{q-1}-m_3)[2g-2]_q$ & $q(q-1)$ \\
$15$ & $m_{q-1}(m_{q-1}-1)[2g]_q$ & $(m_{q-1}-m_3)[2g]_q$ & $q(q-1)$ \\
$16$ & $m_{q-1}[2g]_q$ & $m_3[2g]_q$ & $q^3(q-1)$ \\
$17,21$ & $\frac{1}{2}m_{q-1}(m_{q-1}-1)(m_{q-1}-2)[2g-2]_q^2$ & $\frac{1}{2}(m_{q-1}^2 - 3m_{q-1}+2m_3)[2g-2]_q^2$ & $1$ \\
$18,22$ & $m_{q-1}(m_{q-1}-1)\frac{1}{(q+1)}[2g-2]_q[2g-3]_q$ & $(m_{q-1}-m_3)\frac{1}{(q+1)}[2g-2]_q[2g-3]_q$ & $1$ \\
$19,23$ & $m_{q-1}(m_{q-1}-1)[2g-2]_q[2g]_q$ & $(m_{q-1}-m_3)[2g-2]_q[2g]_q$ & $q$ \\
$20,24$ & $m_{q-1}\frac{1}{(q+1)}[2g]_q[2g-1]_q$ & $m_3 \frac{1}{(q+1)}[2g]_q[2g-1]_q$ & $q^2$ \\
$25$ & $m_{q-1}(m_{q-1}-1)(m_{q-1}-2)[2g-2]_q^2 q^{2g-2}$ & $(m_{q-1}^2 - 3m_{q-1}+2m_3)[2g-2]_q^2 q^{2g-2}$ & $1$ \\
$26,27$ & $m_{q-1}(m_{q-1}-1)[2g-2]_q [2g]_q q^{2g-3}$ & $(m_{q-1} - m_3)[2g-2]_q [2g]_q q^{2g-3}$ & $1$ \\
$28$ & $m_{q-1}(m_{q-1}-1)[2g-2]_q [2g-3]_q q^{2g}$ & $(m_{q-1} - m_3)[2g-2]_q [2g-3]_q q^{2g}$ & $q$ \\
$29$ & $m_{q-1}[2g]_q [2g-2]_q q^{2g-1}$ & $m_3 [2g]_q [2g-2]_q q^{2g-1}$ & $q$ \\
$30$ & $m_{q-1}[2g]_q q^{2g-1}$ & $m_3 [2g]_q q^{2g-1}$ & $q^2$ \\
\hline
\end{tabular}}
\end{equation*}
\caption{Sizes of strata and their stabilisers in the rank $3$ case.}\label{figrank3}
\end{table}

\section{Counting $|Hom(\Gamma , G)|$}\label{seccountinghoms}

Let $G$ be a finite group and let $Hom(\Gamma,G)$ be the set of homomorphisms from $\Gamma$ to $G$. In this section we recall that for certain $\Gamma$, one can compute the size of $Hom(\Gamma , G)$ using character theory. Let us introduce the following notation; for a function $f\colon G \to \mathbb{C}$, we define
\begin{equation*}
\int_G f(x)dx = \frac{1}{|G|} \sum_{x \in G} f(x).
\end{equation*}

The main tools we use are the following results:

\begin{proposition}[\cite{eghlsvy}, Theorem 4.5.4]
Let $\delta \colon G \to \mathbb{C}$ be given by 
\begin{equation*}
\delta(g) = \begin{cases} 1 & \text{if } $g=1$, \\ 0 & \text{otherwise}. \end{cases}
\end{equation*}
Then
\begin{equation*}
\delta = \frac{1}{|G|} \sum_{\chi} \chi(1) \chi,
\end{equation*}
where the sum is over the irreducible characters of $G$.
\end{proposition}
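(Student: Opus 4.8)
The plan is to recognise $\delta$ as a class function and expand it in the orthonormal basis of irreducible characters. First I would observe that $\delta$ is constant on conjugacy classes: since $\{1\}$ is itself a conjugacy class, $\delta$ takes the value $1$ on it and $0$ on every other class, so $\delta$ lies in the space of class functions on $G$.

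Next I would invoke the standard fact that the irreducible characters of $G$ form an orthonormal basis of the space of class functions with respect to the Hermitian inner product $\langle f_1, f_2 \rangle = \int_G f_1(x)\overline{f_2(x)}\,dx$, where the integral is the averaging notation introduced above. Consequently $\delta$ admits a unique expansion $\delta = \sum_\chi c_\chi \chi$, and orthonormality forces $c_\chi = \langle \delta, \chi \rangle$.

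Then I would compute the coefficients directly. Since $\delta$ is supported at the identity,
\begin{equation*}
c_\chi = \langle \delta, \chi \rangle = \frac{1}{|G|} \sum_{g \in G} \delta(g)\overline{\chi(g)} = \frac{1}{|G|}\overline{\chi(1)} = \frac{\chi(1)}{|G|},
\end{equation*}
where the last step uses that $\chi(1)$ is the dimension of the corresponding irreducible representation, hence a real positive integer. Substituting back yields $\delta = \frac{1}{|G|}\sum_\chi \chi(1)\chi$, as claimed.

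An equivalent route, which I would mention as a cross-check, is via the regular representation: its character $\chi_{\mathrm{reg}}$ satisfies $\chi_{\mathrm{reg}}(g) = |G|\,\delta(g)$, while the decomposition of $\mathbb{C}[G]$ into irreducibles (each occurring with multiplicity equal to its dimension) gives $\chi_{\mathrm{reg}} = \sum_\chi \chi(1)\chi$; equating the two expressions produces the same identity. The only genuine content in either approach is the background representation theory---orthonormality of irreducible characters, or complete reducibility together with the multiplicity count for the regular representation---so I do not expect a real obstacle beyond correctly invoking this standard machinery.
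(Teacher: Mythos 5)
Your proof is correct. The paper itself offers no argument for this proposition---it is quoted verbatim from the cited reference (Etingof et al., Theorem 4.5.4)---so there is nothing to diverge from; both of your routes (expanding $\delta$ in the orthonormal basis of irreducible characters, or equating the two expressions for the character of the regular representation) are the standard proofs, and either one suffices.
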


\begin{proposition}[\cite{harv}, \textsection 2.3]
Let $\chi$ be any irreducible character of $G$ and $z$ any element of $G$. Let $w(x_1,\dots , x_n)$ denote a word in $x_1,...,x_n$. Then
\begin{equation*}
\int_{G^n} \chi( w(x_1,\dots , x_n ) z )dx_1 \dots dx_n = \frac{\chi(z)}{\chi(1)} \int_{G^n} \chi( w( x_1 , \dots , x_n) )dx_1 \dots dx_n.
\end{equation*}
\end{proposition}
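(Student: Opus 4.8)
The plan is to lift the identity from the level of characters to the level of the representing endomorphisms, where Schur's lemma applies. Let $\rho \colon G \to GL(V)$ be an irreducible representation with character $\chi$, so that $\chi(g) = tr(\rho(g))$. The central object I would introduce is the endomorphism-valued average
\begin{equation*}
M = \int_{G^n} \rho\left( w(x_1, \dots, x_n) \right) \, dx_1 \cdots dx_n \in End(V).
\end{equation*}
Since $\chi\left( w(x_1,\dots,x_n) z \right) = tr\left( \rho(w(x_1,\dots,x_n)) \, \rho(z) \right)$ and the trace is linear, the left-hand side of the claimed identity is exactly $tr\left( M \rho(z) \right)$. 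Thus the whole proposition reduces to showing that $M$ is a scalar multiple of the identity.

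To prove $M = \lambda \, Id$, I would verify that $M$ commutes with $\rho(h)$ for every $h \in G$ and then invoke Schur's lemma. Conjugating,
\begin{equation*}
\rho(h) \, M \, \rho(h)^{-1} = \int_{G^n} \rho\left( h \, w(x_1, \dots, x_n) \, h^{-1} \right) \, dx_1 \cdots dx_n.
\end{equation*}
The key algebraic point is that conjugation by $h$ is a group automorphism, so it distributes over the letters of the word: $h \, w(x_1, \dots, x_n) \, h^{-1} = w\left( h x_1 h^{-1}, \dots, h x_n h^{-1} \right)$. Performing the substitution $x_i \mapsto h x_i h^{-1}$, which is a bijection of $G$ in each coordinate and hence preserves the uniform average over $G^n$, leaves the integral unchanged, so $\rho(h) M \rho(h)^{-1} = M$ for all $h$. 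Since $\rho$ is irreducible, Schur's lemma forces $M = \lambda \, Id$ for some $\lambda \in \mathbb{C}$.

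Finally I would pin down $\lambda$. From $M = \lambda \, Id$ we obtain $tr(M \rho(z)) = \lambda \, tr(\rho(z)) = \lambda \, \chi(z)$, and specialising to $z = 1$ gives $\int_{G^n} \chi(w(x_1,\dots,x_n)) \, dx_1 \cdots dx_n = \lambda \, \chi(1)$, whence $\lambda = \frac{1}{\chi(1)} \int_{G^n} \chi(w) \, dx$. Substituting this back into $tr(M\rho(z)) = \lambda \chi(z)$ yields precisely the asserted formula. The only subtle step is the conjugation-invariance of $M$: it relies on both the fact that $w$ is genuinely a word (so that conjugation passes to each generator) and the invariance of the normalised counting measure under simultaneous conjugation; the remainder is the standard Schur-averaging argument.
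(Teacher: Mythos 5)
Your proof is correct: reducing the identity to the statement that the averaged operator $M = \int_{G^n} \rho(w(x_1,\dots,x_n))\,dx_1\cdots dx_n$ is conjugation-invariant, hence scalar by Schur's lemma, is exactly the standard argument, and your handling of the change of variables $x_i \mapsto h x_i h^{-1}$ and the evaluation of the scalar at $z=1$ are both sound. The paper gives no proof of its own (it cites Hausel--Rodriguez-Villegas, \textsection 2.3), and the argument there is the same averaging/Schur computation, phrased via the central element $\sum_{x \in G^n} w(x)$ of the group algebra $\mathbb{C}[G]$, so your proposal matches the intended proof.
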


\begin{proposition}[\cite{harv}, \textsection 2.3]
For any irreducible character $\chi$, we have
\begin{equation*}
\int_{G^2} \chi(xyx^{-1}y^{-1})dxdy = \frac{1}{\chi(1)}.
\end{equation*}
\end{proposition}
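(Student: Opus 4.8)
The plan is to collapse the double average by integrating over $x$ first and then over $y$. First I would fix $y \in G$ and factor the commutator as $xyx^{-1}y^{-1} = w(x)\,z$, where $w(x) = xyx^{-1}$ is a word in the single variable $x$ and $z = y^{-1}$ is treated as a constant. Applying the preceding proposition with $n=1$ then gives
\[
\int_G \chi(xyx^{-1}y^{-1})\,dx = \frac{\chi(y^{-1})}{\chi(1)} \int_G \chi(xyx^{-1})\,dx.
\]
Since $\chi$ is a class function, $\chi(xyx^{-1}) = \chi(y)$ for every $x$, so the integral on the right is simply $\chi(y)$, and the $x$-average equals $\chi(y)\chi(y^{-1})/\chi(1)$.

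The second step is to average this over $y$. Extracting the constant factor $1/\chi(1)$, I am left with
\[
\int_{G^2} \chi(xyx^{-1}y^{-1})\,dx\,dy = \frac{1}{\chi(1)} \int_G \chi(y)\chi(y^{-1})\,dy = \frac{1}{\chi(1)}\cdot\frac{1}{|G|}\sum_{y \in G}\chi(y)\chi(y^{-1}).
\]
Because $\chi(y^{-1}) = \overline{\chi(y)}$ for a complex character, the remaining sum equals $|G|\,\langle \chi , \chi \rangle$, and $\langle \chi , \chi \rangle = 1$ by the orthogonality relations for the irreducible character $\chi$. This produces the claimed value $1/\chi(1)$.

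The one point that needs care is the legitimacy of invoking the preceding proposition when the parameter $y$ appears as a constant inside the word $w(x) = xyx^{-1}$; this is precisely the setting that proposition covers, since only $x$ is averaged while $z$ and any constants inside $w$ remain fixed. I do not expect a genuine obstacle here, but if one prefers to avoid relying on that proposition, the same $x$-average follows directly from Schur's lemma: writing $\chi = tr\,\rho$ for the representation $\rho$ affording $\chi$, the average $\frac{1}{|G|}\sum_x \rho(x)\rho(y)\rho(x)^{-1}$ is a self-intertwiner of the irreducible $\rho$, hence the scalar $\frac{\chi(y)}{\chi(1)}Id$; multiplying by $\rho(y)^{-1}$ and taking traces recovers the expression above, after which the $y$-average proceeds identically.
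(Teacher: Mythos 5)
Your proof is correct. Note that the paper itself gives no proof of this proposition (it is quoted from Hausel--Rodriguez-Villegas, \textsection 2.3), and the argument there is essentially your Schur's-lemma computation, so you have in effect reconstructed the cited proof. One caveat on your primary route, though: the preceding proposition, as stated, concerns a word $w(x_1,\dots,x_n)$ in the averaged variables alone, and its proof (simultaneously conjugating all variables, which conjugates $w$) genuinely uses that restriction. For words containing constants the statement is false in general: take the constant word $w(x)=y$, where $\int_G \chi(yz)\,dx = \chi(yz)$, which need not equal $\chi(y)\chi(z)/\chi(1)$. So your remark that a constant $y$ inside $w$ "is precisely the setting that proposition covers" is not right as stated. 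What saves your application is the special shape $w(x)=xyx^{-1}$: the matrix $M=\frac{1}{|G|}\sum_x \rho(x)\rho(y)\rho(x)^{-1}$ commutes with every $\rho(g)$ by the reindexing $x\mapsto gx$ (left-translation invariance, not simultaneous conjugation), hence equals $\frac{\chi(y)}{\chi(1)}\,Id$ by Schur's lemma. That is exactly your fallback argument, so the proof stands: multiplying by $\rho(y^{-1})$, taking traces gives $\int_G \chi(xyx^{-1}y^{-1})\,dx = \chi(y)\chi(y^{-1})/\chi(1)$, and averaging over $y$ with the orthogonality relation $\frac{1}{|G|}\sum_y \chi(y)\chi(y^{-1})=1$ yields $1/\chi(1)$. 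In short: keep the Schur's-lemma argument as the actual proof and treat the appeal to the preceding proposition as heuristic.
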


Let $\chi$ be an irreducible character of $G$ and $n$ a positive integer. Then we define the {\em $n$-th Frobenius-Schur indicator} $\nu_n(\chi)$ by
\begin{equation*}
\nu_n(\chi) = \int_G \chi( x^n) dx.
\end{equation*}
It is known that $\nu_n(\chi)$ is an integer \cite{isa}. When $n=2$, $\nu_2(\chi)$ is the usual Frobenius-Schur indicator and we have:
\begin{equation*}
\nu_2(\chi) = \begin{cases} $1$ & \text{if } \chi \text{ is real,} \\ $0$ & \text{if } \chi \text{ is complex,} \\ $-1$ & \text{if } \chi \text{ is quaternionic.} \end{cases}
\end{equation*}

\subsection{Fundamental groups of compact oriented surfaces}
Let $\Gamma = \pi_1(\Sigma)$, where $\Sigma$ is a compact oriented surface of genus $g$. Then $\Gamma$ has a presentation
\begin{equation*}
\Gamma = \langle a_1 , b_1 , \dots , a_g , b_g \; | \; [a_1,b_1] \dots [a_g,b_g] = 1 \rangle.
\end{equation*}
Hence
\begin{equation*}
\begin{aligned}
| Hom(\Gamma , G)| &= |G|^{2g} \int_{G^{2g}} \delta( [a_1,b_1]\dots [a_g,b_g]) da_1 \dots db_g \\
&= |G|^{2g-1} \sum_\chi \chi(1) \int_{G^{2g}} \chi( [a_1,b_1]\dots [a_g,b_g]) da_1 \dots db_g \\
&= |G|^{2g-1} \sum_\chi \chi(1)^{2-g} \left( \int_{G^2} \chi( [a,b] ) dadb \right)^g \\
&= |G|^{2g-1} \sum_\chi \chi(1)^{2-2g},
\end{aligned}
\end{equation*}
and we have recovered the well known formula:
\begin{equation}\label{equhomcos}
\frac{|Hom(\Gamma , G)|}{|G|} = \sum_\chi \left( \frac{|G|}{\chi(1)} \right)^{2g-2}.
\end{equation}

\subsection{Fundamental groups of compact non-orientable surfaces}
Let $\Gamma = \pi_1(\Sigma)$, where $\Sigma$ is a compact non-orientable surface of Euler characteristic $e$. Then $\Sigma = \#^k \mathbb{RP}^2$, where $k=2-e$. So $\Gamma$ has a presentation
\begin{equation*}
\Gamma = \langle a_1 , a_2 \dots , a_k \; | \; a_1^2 a_2^2 \dots a_k^2 = 1 \rangle.
\end{equation*}
Hence
\begin{equation*}
\begin{aligned}
| Hom(\Gamma , G)| &= |G|^{k} \int_{G^{k}} \delta( a_1^2\dots a_k^2) da_1 \dots da_k \\
&= |G|^{k-1} \sum_\chi \chi(1) \int_{G^{k}} \chi( a_1^2\dots a_k^2) da_1 \dots da_k \\
&= |G|^{k-1} \sum_\chi \chi(1)^{2-k} \left( \int_{G^2} \chi( a^2 ) da \right)^k \\
&= |G|^{k-1} \sum_\chi \chi(1)^{2-k} \nu_2^k(\chi),
\end{aligned}
\end{equation*}
so we have found:
\begin{equation}\label{equhomcnos}
\frac{|Hom(\Gamma , G)|}{|G|} = \sum_\chi \left( \frac{|G|}{\chi(1)} \right)^{k-2}\nu_2^k(\chi).
\end{equation}

\subsection{Torus knots}
Let $a,b$ be coprime positive integers and set $\Gamma = \pi_1(S^3 \setminus K)$, where $K$ is an $(a,b)$-torus knot. Then $\Gamma$ is known to have the presentation
\begin{equation*}
\Gamma = \langle x,y \; | \; x^a = y^b \rangle.
\end{equation*}
Hence,
\begin{equation*}
\begin{aligned}
| Hom(\Gamma , G)| &= |G|^{2} \int_{G^{2}} \delta( x^a y^{-b}) dx dy \\
&= |G| \sum_\chi \chi(1) \int_{G^{2}} \chi( x^a y^{-b}) dxdy \\
&= |G| \sum_\chi \left( \int_{G} \chi( x^a ) dx \right) \left( \int_{G} \chi( y^{-b} ) dy \right)\\
&= |G| \sum_\chi \nu_a(\chi) \nu_b(\chi),
\end{aligned}
\end{equation*}
so we have found:
\begin{equation}\label{equhomtk}
\frac{|Hom(\Gamma , G)|}{|G|} = \sum_\chi \nu_a(\chi)\nu_b(\chi).
\end{equation}

\subsection{Characters of $GL_2(\mathbb{F}_q)$}

Assume $q$ is odd. For an abelian group $A$, we let $\widehat{A} = Hom(A , \mathbb{C}^*)$ denote the character group of $A$. The character table for $GL_2(\mathbb{F}_q)$ is \cite{dimi}, \cite{mer}:

\begin{equation*}
\renewcommand{\arraystretch}{1.4}
\begin{tabular}{|c|cccc|}
\hline
classes & $\left( \begin{matrix} a & 0 \\ 0 & a \end{matrix} \right)$ & $\left( \begin{matrix} a & 1 \\ 0 & a \end{matrix} \right)$ & $\left( \begin{matrix} a & 0 \\ 0 & b \end{matrix} \right)$ & $\left( \begin{matrix} x & 0 \\ 0 & x^q \end{matrix} \right)$ \\
&  &  & $a \neq b$ & $x \neq x^q$ \\
\hline
\# classes & $q-1$ & $q-1$ & $(q-1)(q-2)/2$ & $q(q-1)/2$ \\
class size & $1$ & $q^2-1$ & $q(q+1)$ & $q(q-1)$ \\
\hline
$R^G_T(\alpha,\beta)$ & $(q+1)\alpha(a)\beta(a)$ & $\alpha(a)\beta(a)$ &$\alpha(a)\beta(b) + \alpha(b)\beta(a)$ & $0$ \\
$-R^G_{T^s}(\omega)$ & $(q-1)\omega(a)$ & $-\omega(a)$ & $0$ & $-(\omega(x) + \omega(x^q))$ \\
$\sigma_{\alpha}(1)$ & $\alpha(a^2)$ & $\alpha(a^2)$ & $\alpha(ab)$ & $\alpha(x^{q+1})$ \\
$\sigma_{\alpha}(St_G)$ & $q\alpha(a^2)$ & $0$ & $\alpha(ab)$ & $-\alpha(x^{q+1})$ \\
\hline
\end{tabular}
\end{equation*}
\vspace{0.4cm}

In this table $a,b \in \mathbb{F}_q^\times$, $a \neq b$, $x \in \mathbb{F}_{q^2}^\times$, $x \neq x^q$, $\alpha,\beta \in \widehat{\mathbb{F}_q^\times}$, $\alpha \neq \beta$, $\omega \in \widehat{\mathbb{F}_{q^2}^\times}$, $\omega \neq \omega^q$. Swapping $a$ with $b$ or $x$ with $x^q$ gives the same conjugacy class. Similarly swapping $\alpha$ with $\beta$ or $\omega$ with $\omega^q$ gives the same character.

\subsection{Characters of $SL_2(\mathbb{F}_q)$}
Assume that $q$ is odd. Let $\alpha_0 \in \widehat{\mathbb{F}}_q^\times$ be the unique element of order $2$ and similarly let $\omega_0 \in \widehat{\mu}_{q+1}$ be the unique element of order $2$. The character table for $SL_2(\mathbb{F}_q)$ is \cite{dimi}, \cite{mer}:

\begin{equation*}
\renewcommand{\arraystretch}{1.4}
\begin{tabular}{|c|cccc|}
\hline
classes & $\left( \begin{matrix} a & 0 \\ 0 & a \end{matrix} \right)$ & $\left( \begin{matrix} a & b \\ 0 & a \end{matrix} \right)$ & $\left( \begin{matrix} a & 0 \\ 0 & a^{-1} \end{matrix} \right)$ & $\left( \begin{matrix} x & 0 \\ 0 & x^q \end{matrix} \right)$ \\
& $a = \pm 1$ & $a = \pm 1$ & $a \notin \{1,-1\}$ & $x \neq x^q$ \\
\hline
\# classes & $2$ & $4$ & $(q-3)/2$ & $(q-1)/2$ \\
class size & $1$ & $(q^2-1)/2$ & $q(q+1)$ & $q(q-1)$ \\
\hline
$R^G_T(\alpha)$ & $(q+1)\alpha(a)$ & $\alpha(a)$ &$\alpha(a) + \alpha(a^{-1})$ & $0$ \\
$\chi^{\pm}_{\alpha_0}$ & $\dfrac{(q+1)}{2}\alpha_0(a)$ & $\dfrac{\alpha_0(a)}{2}(1 \pm \varphi)$ & $\alpha_0(a)$ & $0$ \\
$-R^G_{T^s}(\omega)$ & $(q-1)\omega(a)$ & $-\omega(a)$ & $0$ & $-(\omega(x) + \omega(x^q))$ \\
$\chi^{\pm}_{\omega_0}$ & $\dfrac{(q-1)}{2}\omega_0(a)$ & $\dfrac{\omega_0(a)}{2}(-1 \pm \varphi)$ & $0$ & $-\omega_0(x)$ \\
$1$ & $1$ & $1$ & $1$ & $1$ \\
$St_G$ & $q$ & $0$ & $1$ & $-1$ \\
\hline
\end{tabular}
\end{equation*}
\vspace{0.4cm}

In this table $\mu_{q+1} = \{ x \in \mathbb{F}_{q^2}^\times \; | \; x^{q+1} = 1 \}$, $a \in \mathbb{F}_q^\times$, $x \in \mu_{q+1}$ with $x \neq x^q$, $\alpha \in \widehat{\mathbb{F}}_q^\times$ with $\alpha \neq 1 , \alpha_0$, $\omega \in \widehat{\mu}_{q+1}$ with $\omega \neq 1 , \omega_0$, $b$ is either $1$ or $y$, where $y \in \mathbb{F}_q^\times$ is some fixed non-square, $\varphi$ denotes the complex number $\alpha_0(ab)\sqrt{\alpha_0(-1)q}$. Note that replacing $a$ by $a^{-1}$ or $x$ by $x^{-1} = x^q$ gives a different representative for the same conjugacy class. Similarly, replacing $\alpha$ by $\alpha^{-1}$ or $\omega$ by $\omega^{-1}$ gives the same character.

\subsection{Frobenius-Schur indicators}

Assume $q = 1 \; ({\rm mod} \; 4)$ and $q = 1 \; ({\rm mod} \; a)$. By a direct computation from the character table, we find the Frobenius-Schur indicators for $GL_2(\mathbb{F}_q)$ are:

\begin{equation*}
\renewcommand{\arraystretch}{1.4}
\begin{tabular}{|c|cc|}
\hline
$\chi$ & $\nu_a(\chi)$ & $\nu_a(\chi)$ \\
& $a$ even & $a$ odd \\
\hline
$R^G_T(\alpha,\beta)$ & $\delta_{\alpha^a}\delta_{\beta^a} +\delta_{\alpha^{a/2}\beta^{a/2}} + \frac{(a-2)}{2}\delta_{\alpha^a\beta^a}$ & $\delta_{\alpha^a}\delta_{\beta^a} + \frac{(a-1)}{2}\delta_{\alpha^a \beta^a}$  \\
$-R^G_{T^s}(\omega)$ & $-\delta_{\omega^a} +\delta_{\omega^{(q+1)a/2}} + \frac{(a-2)}{2}\delta_{\omega^{(q+1)a}}$ & $-\delta_{\omega^a} + \frac{(a-1)}{2}\delta_{\omega^{(q+1)a}}$ \\
$\sigma_{\alpha}(1)$ & $\delta_{\alpha^a}$ & $\delta_{\alpha^a}$ \\
$\sigma_{\alpha}(St_G)$ & $\delta_{\alpha^a} +\frac{(a-2)}{2}\delta_{\alpha^{2a}}$ & $\frac{(a-1)}{2}\delta_{\alpha^{2a}}$ \\
\hline
\end{tabular}
\end{equation*}
\vspace{0.1cm}

In this table, $\delta_{\alpha} = 1$ if $\alpha = 1$, $\delta_{\alpha} = 0$ if $\alpha \neq 1$.\\

Similarly the Frobenius-Schur indicators for $SL_2(\mathbb{F}_q)$ are as follows. In this table we calculate $\nu_a(\chi)$ under the assumption that $q = 1 \; ({\rm mod} \; 4)$ and $q = 1 \; ({\rm mod} \; a)$.

\begin{equation*}
\renewcommand{\arraystretch}{1.4}
\begin{tabular}{|c|cc|}
\hline
$\chi$ & $\nu_a(\chi)$ & $\nu_a(\chi)$ \\
& $a$ even & $a$ odd \\
\hline
$R^G_T(\alpha)$ & $\delta_{\alpha^a} -1 + \dfrac{a}{2}(1+\alpha(-1))$ & $\delta_{\alpha^a} + \dfrac{(a-1)}{2}(1 + \alpha(-1))$  \\
$\chi^{\pm}_{\alpha_0}$ & $\dfrac{a}{2}$ & $\dfrac{(a-1)}{2}$  \\
$-R^G_{T^s}(\omega)$ & $-1 + \dfrac{a}{2}(1 + \omega(-1))$ & $\dfrac{(a-1)}{2}(1 + \omega(-1))$ \\
$\chi^{\pm}_{\omega_0}$ & $-1$ & $0$ \\
$1$ & $1$ & $1$ \\
$St_G$ & $a-1$ & $a-1$ \\
\hline
\end{tabular}
\end{equation*}
\vspace{0cm}
\subsection{Case of $GL_3(\mathbb{F}_q)$}\label{sechomsgl3}

We assume $q = 1 \; ({\rm mod} \; 3)$. In this section we will compute $|Hom(\Gamma , GL_3(\mathbb{F}_q))|$ in the case that $\Gamma = \pi_1(\Sigma_g)$ is the fundamental group of a compact oriented surface of genus $g$. From (\ref{equhomcos}), we have
\begin{equation}\label{equgl3hom}
\frac{|Hom(\Gamma , GL_3(\mathbb{F}_q))|}{|GL_3(\mathbb{F}_q)|} = \sum_\chi \left( \frac{|GL_3(\mathbb{F}_q)|}{\chi(1)} \right)^{2g-2},
\end{equation}
where the sum is over the irreducible characters of $GL_3(\mathbb{F}_q)$. To compute this we need to briefly review some facts about the irreducible characters of the general linear groups over finite fields \cite{mac}, \cite{harv} \cite{gre}.\\

Let $\mathcal{P}_m$ be the set of all partitions $\lambda$ of $m$ and write $m = |\lambda|$. Let $\mathcal{P} = \cup_{m \ge 0} \mathcal{P}_m$, where, $\mathcal{P}_0$ consists of just the trivial partition $\{0\}$. A non-trivial partition $\lambda$ may be written as $\lambda = (\lambda_1 \ge \lambda_2 \ge \dots \ge \lambda_l > 0)$, where $|\lambda| = \lambda_1 + \lambda_2 + \dots + \lambda_l$. Set $\langle \lambda , \lambda \rangle = \sum_{j=1}^l (2j-1)\lambda_j$. Following \cite{harv}, we define for each partition $\lambda$ a modified hook polynomial $\mathcal{H}_\lambda(q)$ by
\begin{equation*}
\mathcal{H}_\lambda(q) = q^{-\frac{1}{2}\langle \lambda , \lambda \rangle } \prod_z (1 - q^{h(z)}),
\end{equation*}
where the product is over the boxes $z$ of the Young diagram and $h(z)$ is the hook length of $z$. For the trivial partition $\{0\}$, set $\mathcal{H}_{ \{0\} }(q) = 1$.\\

For each positive integer $r$, let $\Gamma_r = \widehat{\mathbb{F}}_{q^d}^\times$. If $r$ divides $s$ then the norm map $\mathbb{F}_{q^s} \to \mathbb{F}_{q^r}$ induces an inclusion $\Gamma_r \to \Gamma_s$. Let $\Gamma = \varinjlim \Gamma_r$ be the direct limit of the $\Gamma_r$ with respect to these inclusions. The Frobenius automorphism $\sigma$ acts on $\Gamma$ via $\sigma(\gamma) = \gamma^q$. The degree $d(\gamma)$ of $\gamma \in \Gamma$ is defined as the size of the orbit of $\gamma$ under the Frobenius action. There is a canonical bijection between the irreducible characters of $GL_n(\mathbb{F}_q)$ and maps $\Lambda \colon \Gamma \to \mathcal{P}$ which commute with $\sigma$ and satisfy
\begin{equation*}
| \Lambda | = \sum_{\gamma} |\Lambda(\gamma)| = n.
\end{equation*}
In particular, finiteness of the sum means that $\Lambda(\gamma) = \{0\}$ for all but finitely many $\gamma$. Let $\mathcal{P}_n(\Gamma)$ be the set of all maps $\Gamma \to \mathcal{P}$ satisfying these conditions. Given a $\Lambda \in \mathcal{P}_n(\Gamma)$, let $m_{d,\lambda}$ be the multiplicity of $(d,\lambda)$ in $\Lambda$, where $d$ is a positive integer and $\lambda$ a partition. That is,
\begin{equation*}
m_{d,\lambda} = | \{ \gamma \in \Gamma \; | \; d(\gamma) = d, \; \Lambda(\gamma) = \lambda \}|.
\end{equation*}
Note that $|\Lambda| = \sum_{d,\lambda} m_{d,\lambda} d |\lambda|$. The collection $\{ m_{d,\lambda} \}$ of all multiplicities is called the {\em type} of $\Lambda$ and will be denoted by $\tau(\Lambda)$. In particular, $|\Lambda|$ depends only on the type and so we will write $|\tau|$ for $|\Lambda|$ where $\tau = \tau(\Lambda)$. For a given type $\tau$, let
\begin{equation*}
\mathcal{H}_\tau(q) = \prod_{d,\lambda} \mathcal{H}_{\lambda}(q^d)^{m_{d,\lambda}}.
\end{equation*}
Given $\Lambda \in \mathcal{P}_n(\Gamma)$, let $\chi_\Lambda$ be the corresponding irreducible character. Then \cite{harv}
\begin{equation*}
\frac{|GL_n(\mathbb{F}_q)|}{\chi_\Lambda(1)} = (-1)^n q^{\frac{1}{2}n^2} \mathcal{H}_{\tau'},
\end{equation*}
where $\tau' = \tau(\Lambda')$ and $\Lambda'$ is the map $\Gamma \to \mathcal{P}$ sending $\gamma$ to $\Lambda(\gamma)'$ (the dual partition of $\Lambda(\gamma)$).\\

To compute the right hand side of (\ref{equgl3hom}), we simply need to work out which types $\tau$ have $|\tau| = 3$ and then count the number of $\Lambda$ of each type. We summarise this information in the following table:

\begin{equation*}
\renewcommand{\arraystretch}{1.4}
\begin{tabular}{|c|c|c|}
\hline
$\tau$ & $\mathcal{H}_\tau(q)$ & $\#$ of $\Lambda$ of type $\tau$ \\
\hline
$m_{3,1}=1$ & $-q^{-\frac{3}{2}}(q^3-1)$ & $(q^3-q)/3$ \\
$m_{2,1}= m_{1,1} = 1$ & $q^{-\frac{3}{2}}(q^2-1)(q-1)$ & $(q^2-q)(q-1)/2$ \\
$m_{1,3}=1$ & $-q^{-\frac{3}{2}}(q^3-1)(q^2-1)(q-1)$ & $(q-1)$ \\
$m_{1,2+1}=1$ & $-q^{-\frac{5}{2}}(q^3-1)(q-1)^2$ & $(q-1)$ \\
$m_{1,1+1+1}=1$ & $-q^{-\frac{9}{2}}(q^3-1)(q^2-1)(q-1)$ & $(q-1)$ \\
$m_{1,2}=m_{1,1}=1$ & $-q^{-\frac{3}{2}}(q^2-1)(q-1)^2$ & $(q-1)(q-2)$ \\
$m_{1,1+1}=m_{1,1}=1$ & $-q^{-\frac{5}{2}}(q^2-1)(q-1)^2$ & $(q-1)(q-2)$ \\
$m_{1,1}=3$ & $-q^{-\frac{3}{2}}(q-1)^3$ & $(q-1)(q-2)(q-3)/6$ \\
\hline
\end{tabular}
\end{equation*}
\vspace{0.1cm}

\noindent Thus

\begin{equation*}
\begin{aligned}
\frac{|Hom(\Gamma , GL_3(\mathbb{F}_q))|}{|GL_3(\mathbb{F}_q)|} &= \frac{(q^3-q)}{3} \left( q^3(q^3-1) \right)^{2g-2} + \frac{(q^2-q)(q-1)}{2} \left( q^3(q^2-1)(q-1) \right)^{2g-2} \\
& \; \; \; \; \; + (q-1) \left( q^3(q^3-1)(q^2-1)(q-1) \right)^{2g-2} + (q-1)\left( q^2(q^3-1)(q-1)^2 \right)^{2g-2} \\
& \; \; \; \; \; + (q-1)\left( (q^3-1)(q^2-1)(q-1) \right)^{2g-2} + (q-1)(q-2)\left( q^3(q^2-1)(q-1)^2 \right)^{2g-2} \\
& \; \; \; \; \; + (q-1)(q-2)\left( q^2(q^2-1)(q-1)^2 \right)^{2g-2} + \frac{(q-1)(q-2)(q-3)}{6}\left( q^3(q-1)^3 \right)^{2g-2}.
\end{aligned}
\end{equation*}
We can simplify this further, giving:

\begin{align}
\frac{|Hom(\Gamma , GL_3(\mathbb{F}_q))|}{(q-1)^{2g-1}|GL_3(\mathbb{F}_q)|} &= \frac{(q^2+q)}{3} \left( q^5+q^4+q^3 \right)^{2g-2} + \frac{(q^2-q)}{2} \left( q^5-q^3 \right)^{2g-2} \nonumber \\
& \; \; \; \; \; + \left( q^8-q^6-q^5+q^3 \right)^{2g-2} + \left(  q^6-q^5-q^3+q^2 \right)^{2g-2} \nonumber \\
& \; \; \; \; \; + \left( q^5-q^3-q^2+1 \right)^{2g-2} + (q-2)\left( q^6-q^5-q^4+q^3 \right)^{2g-2} \label{equhomintogl3} \\
& \; \; \; \; \; + (q-2)\left( q^5-q^4-q^3+q^2 \right)^{2g-2} + \frac{(q-2)(q-3)}{6}\left( q^5-2q^4+q^3 \right)^{2g-2}. \nonumber
\end{align}

\subsection{Case of $SL_3(\mathbb{F}_q)$}\label{sechomssl3}

We assume $q = 1 \; ({\rm mod} \; 3)$ and compute $|Hom(\Gamma , SL_3(\mathbb{F}_q))|$, again for the case that $\Gamma = \pi_1(\Sigma_g)$ is the fundamental group of a compact oriented surface of genus $g$. For this, we need to relate the irreducible characters of $SL_n(\mathbb{F}_q)$ with $GL_n(\mathbb{F}_q)$. Let $Irr(GL_n)$ be the set of irreducible characters for $GL_n(\mathbb{F}_q)$ and $Irr(SL_n)$ the irreducible characters of $SL_n(\mathbb{F}_q)$. There is an action of $\mathbb{F}_q^\times$ on $Irr(SL_n)$ by $(g\theta)(h) = \theta(\tilde{g} h \tilde{g}^{-1} )$, where $\tilde{g}$ is any element of $GL_n(\mathbb{F}_q)$ with $det(\tilde{g}) = g$. There is also an action of $\widehat{\mathbb{F}}_q^\times$ on $Irr(GL_n)$ by tensor product, where one views $\widehat{\mathbb{F}}_q^\times$ as the $1$-dimensional representations of $GL_n(\mathbb{F}_q)$ which factor through the determinant $det \colon GL_n(\mathbb{F}_q) \to \mathbb{F}_q^\times$.

\begin{proposition}[\cite{leh},\cite{kagr}]\label{propirrslgl}
There is a bijection $Irr(GL_n)/ \widehat{\mathbb{F}}_q^\times \cong Irr(SL_n)/\mathbb{F}_q^\times$ given as follows. Let $\chi \in Irr(GL_n)$. Then there is a unique $\mathbb{F}_q^\times$-orbit $[\theta] \in Irr(SL_n) / \mathbb{F}_q^\times$ such that
\begin{equation*}
\chi|_{SL_n(\mathbb{F}_q)} = \sum_{\theta' \in [\theta] } \theta'.
\end{equation*}
Moreover, letting $Stab(\chi) \subseteq \widehat{\mathbb{F}}_q^\times$ denote the stabiliser of $\chi$ under the $\widehat{\mathbb{F}}_q^\times$-action we have that $| [\theta] | = |Stab(\chi)|$.
\end{proposition}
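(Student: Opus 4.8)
The plan is to treat this as an application of Clifford theory to the normal subgroup $N = SL_n(\mathbb{F}_q)$ of $G = GL_n(\mathbb{F}_q)$, where the essential simplifying feature is that $G/N \cong \mathbb{F}_q^\times$ (via $\det$) is \emph{cyclic}. First I would observe that the conjugation action of $G$ on $Irr(SL_n)$ has $N$ in its kernel, since characters are class functions, so it factors through $G/N \cong \mathbb{F}_q^\times$ and its orbits are exactly the $\mathbb{F}_q^\times$-orbits in the statement; dually, the $\widehat{\mathbb{F}}_q^\times$-action on $Irr(GL_n)$ is twisting by the linear characters of $G$ that are trivial on $N$.

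By Clifford's theorem, for $\chi \in Irr(GL_n)$ and a constituent $\theta$ of $\chi|_N$ one has $\chi|_N = e\sum_{\theta' \in [\theta]}\theta'$, the sum over a single $G$-orbit $[\theta]$ with common multiplicity $e$. The first substantive step is to prove $e = 1$. Writing $T = I_G(\theta)$ for the inertia group, $T/N$ is a subgroup of the cyclic group $G/N$ and is therefore cyclic, while $\theta$ is $T$-invariant; since a $T$-invariant character extends to $T$ whenever $T/N$ is cyclic (the relevant obstruction in $H^2(T/N, \mathbb{C}^\times)$ vanishes), $\theta$ admits an extension $\widehat\theta \in Irr(T)$. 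Gallagher's theorem identifies $Irr(T \mid \theta)$ with $\{\beta\,\widehat\theta : \beta \in \widehat{T/N}\}$, each restricting to $\theta$ with multiplicity one, and the Clifford correspondence $Ind_T^G \colon Irr(T \mid \theta) \xrightarrow{\ \sim\ } Irr(G \mid \theta)$ preserves this multiplicity. Hence $e = 1$ and $\chi \mapsto [\theta]$ is well defined; as twisting by $\psi \in \widehat{\mathbb{F}}_q^\times$ leaves $\chi|_N$ unchanged ($\psi|_N = 1$), it descends to a map $Irr(GL_n)/\widehat{\mathbb{F}}_q^\times \to Irr(SL_n)/\mathbb{F}_q^\times$.

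Surjectivity is immediate from Frobenius reciprocity, since each $\theta$ occurs in $\chi|_N$ for any constituent $\chi$ of $Ind_N^G\theta$. For the stabiliser formula I would compute, using that $G/N$ is abelian so $Ind_N^G 1_N = \sum_{\psi \in \widehat{\mathbb{F}}_q^\times}\psi$,
\[
\langle \chi|_N, \chi|_N\rangle_N = \langle \chi, \chi \otimes Ind_N^G 1_N\rangle_G = \sum_{\psi \in \widehat{\mathbb{F}}_q^\times}\langle \chi, \psi \otimes \chi\rangle_G = |Stab(\chi)|,
\]
while $\chi|_N = \sum_{\theta' \in [\theta]}\theta'$ gives $\langle \chi|_N,\chi|_N\rangle_N = |[\theta]|$; comparing yields $|Stab(\chi)| = |[\theta]|$ (and in general $|Stab(\chi)| = e^2|[\theta]|$, so this identity is precisely what multiplicity one buys). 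Injectivity then follows from the transitivity of the twisting action on each fibre: if $\chi,\chi'$ restrict into the same orbit they both lie over a common $\theta$, and by Gallagher's theorem together with the projection formula $Ind_T^G(\psi|_T \cdot \eta) = \psi \otimes Ind_T^G\eta$ and the surjectivity of $\widehat{G/N} \to \widehat{T/N}$ (again from cyclicity), every element of $Irr(GL_n \mid \theta)$ has the form $\psi \otimes \chi_0$ for $\chi_0 = Ind_T^G\widehat\theta$, so $\chi' = \psi \otimes \chi$.

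\textbf{The main obstacle} will be the two structural inputs—multiplicity one ($e = 1$) and the transitivity of the $\widehat{\mathbb{F}}_q^\times$-action on each fibre $Irr(GL_n \mid \theta)$—both of which rest squarely on the cyclicity of $G/N = \mathbb{F}_q^\times$, through the vanishing of the extension obstruction and through the combination of Gallagher's theorem with the Clifford correspondence. Once these are in place the remaining verifications, including the stabiliser count, are formal consequences of Frobenius reciprocity.
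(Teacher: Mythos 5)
Your proof is correct. The paper itself offers no argument for this proposition --- it is quoted directly from the literature, with citations to Lehrer and to Karkar--Green --- so there is no internal proof to compare against; what you have done is reconstruct, essentially from scratch, the content of those references. Your route is the standard Clifford-theoretic one and it is sound at every step: the identification of the two actions (conjugation on $Irr(SL_n)$ factoring through $\det$, twisting on $Irr(GL_n)$ by characters trivial on $SL_n$), multiplicity one via extendibility of $\theta$ over the cyclic inertia quotient $T/N$ (vanishing Schur multiplier) combined with Gallagher and the multiplicity-preserving Clifford correspondence, the stabiliser count $\langle \chi|_N , \chi|_N \rangle_N = |Stab(\chi)|$ via Frobenius reciprocity and the projection formula against $Ind_N^G 1_N = \sum_\psi \psi$, and injectivity from surjectivity of the restriction $\widehat{G/N} \to \widehat{T/N}$ together with $Ind_T^G(\psi|_T \cdot \eta) = \psi \otimes Ind_T^G \eta$. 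This is in substance the same mechanism as in Karkar--Green, whose theorem is precisely a multiplicity-free restriction statement for normal subgroups with cyclic quotient; what your write-up buys, relative to the paper's bare citation, is a self-contained verification (modulo textbook facts in Isaacs) that also makes transparent \emph{why} the identity $|[\theta]| = |Stab(\chi)|$ holds --- it is exactly the statement $e=1$ in the general relation $|Stab(\chi)| = e^2 |[\theta]|$, a point the paper never surfaces.
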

For a given $\theta \in Irr(SL_n)$ let $t(\theta)$ be the size of the orbit $|[\theta]|$. If $\chi \in Irr(GL_n)$ is related to $\theta$ as in Proposition \ref{propirrslgl}, then we will also denote $t(\theta)$ by $t(\chi)$. The action of $\mathbb{F}_q^\times$ on $Irr(SL_n)$ preserves dimension of the representation, i.e. $(g\theta)(1) = \theta(1)$ for all $g,\theta$. It follows that $\chi(1) = t(\chi)\theta(1)$. Using this, we find:
\begin{equation*}
\begin{aligned}
\frac{|Hom(\Gamma , SL_3(\mathbb{F}_q))|}{|SL_3(\mathbb{F}_q)|} &= \sum_{\theta \in Irr(SL_n)} \left( \frac{|SL_3(\mathbb{F}_q)|}{\theta(1)} \right)^{2g-2} \\
&= \sum_{[\theta] \in Irr(SL_n)/\mathbb{F}_q^\times} t(\theta) \left( \frac{|SL_3(\mathbb{F}_q)|}{\theta(1)} \right)^{2g-2} \\
&= \sum_{[\chi] \in Irr(GL_n)/\widehat{\mathbb{F}}_q^\times} t(\chi) \left( \frac{|SL_3(\mathbb{F}_q)|}{\chi(1)/t(\chi)} \right)^{2g-2} \\
&= \sum_{[\chi] \in Irr(GL_n)/\widehat{\mathbb{F}}_q^\times} \frac{t(\chi)^{2g-1}}{(q-1)^{2g-2}} \left( \frac{|GL_3(\mathbb{F}_q)|}{\chi(1)} \right)^{2g-2} \\
&= \sum_{\chi \in Irr(GL_n)} \frac{t(\chi)^{2g}}{(q-1)^{2g-1}} \left( \frac{|GL_3(\mathbb{F}_q)|}{\chi(1)} \right)^{2g-2} \\
&= \sum_{\Lambda} \frac{t(\chi_\Lambda)^{2g}}{(q-1)^{2g-1}} \mathcal{H}_{\tau'}^{2g-2}.
\end{aligned}
\end{equation*}
To complete the computation, we just need to know for each type $\tau$ and for each positive integer $t$, the number of $\Lambda$ such that $\tau(\Lambda) = \tau$ and $t(\chi_\Lambda) = t$. View $\widehat{\mathbb{F}}_q^\times$ as a subgroup of $\Gamma$, so that $\widehat{\mathbb{F}}_q^\times$ acts on $\Gamma$ by translation. Then the action of $\widehat{\mathbb{F}}_q^\times$ on $\Lambda \colon \Gamma \to \mathcal{P}$ is given by precomposition by the action of $\widehat{\mathbb{F}}_q^\times$ on $\Gamma$. From this it is straightforward to determine the $\Lambda \in \mathcal{P}_3(\Gamma)$ for which $t( \chi_\Lambda) \neq 1$. Namely, this can only happen for the types $m_{3,1} = 1$ and $m_{1,1}=3$. In fact, there are $2(q-1)/3$ different $\Lambda$ of type $m_{3,1}=1$ for which $t = 3$ (all others have $t=1$) and there are $(q-1)/3$ different $\Lambda$ of type $m_{1,1}=3$ for which $t = 3$ (all others have $t=1$). Thus we find:
\begin{align}
\frac{|Hom(\Gamma , SL_3(\mathbb{F}_q))|}{|SL_3(\mathbb{F}_q)|} &= \left( 2 \cdot 3^{2g-1} + \frac{(q-1)(q+2)}{3}\right) \left( q^5+q^4+q^3 \right)^{2g-2} + \frac{(q^2-q)}{2} \left( q^5-q^3 \right)^{2g-2} \nonumber \\
& \; \; \; \; \; + \left( q^8-q^6-q^5+q^3 \right)^{2g-2} + \left(  q^6-q^5-q^3+q^2 \right)^{2g-2} \nonumber \\
& \; \; \; \; \; + \left( q^5-q^3-q^2+1 \right)^{2g-2} + (q-2)\left( q^6-q^5-q^4+q^3 \right)^{2g-2} \label{equhomintosl3} \\
& \; \; \; \; \; + (q-2)\left( q^5-q^4-q^3+q^2 \right)^{2g-2} + \left( 3^{2g-1} + \frac{(q-1)(q-4)}{6} \right) \left( q^5-2q^4+q^3 \right)^{2g-2}. \nonumber
\end{align}

\section{Computations for $GL_2$ and $SL_2$ character varieties}\label{seccomputgl2sl2}

\subsection{Free groups}
In this section $\Gamma = F_r$ is the free group on $r$ generators.

\begin{lemma}
Let $A$ be a non-trivial rank $1$ representation of $\Gamma$ over $\mathbb{F}_q$. Then $b^1_A = r-1$.
\end{lemma}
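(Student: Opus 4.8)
The plan is to exploit that $F_r$ is free, hence of cohomological dimension one, so that the entire computation reduces to a two-term cochain complex. Concretely, I would use the standard free resolution of $\mathbb{Z}$ over $\mathbb{Z}[\Gamma]$ coming from the wedge of $r$ circles as a classifying space for $\Gamma$, namely the exact sequence $0 \to \mathbb{Z}[\Gamma]^r \xrightarrow{d} \mathbb{Z}[\Gamma] \xrightarrow{\epsilon} \mathbb{Z} \to 0$, where $\epsilon$ is the augmentation and $d$ sends the $i$-th basis element to $x_i - 1$. Applying $Hom_{\mathbb{Z}[\Gamma]}(-,A)$ and using $Hom_{\mathbb{Z}[\Gamma]}(\mathbb{Z}[\Gamma]^m, A) \cong A^m$, I obtain the cochain complex $0 \to A \xrightarrow{d^0} A^r \to 0$, so that $H^0(\Gamma, A) = \ker d^0$ and $H^1(\Gamma, A) = A^r / \operatorname{im} d^0$, with all higher cohomology vanishing.

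The key step is then to unwind these two groups explicitly. Equivalently, in the language of crossed homomorphisms, a cocycle $f \in Z^1(\Gamma, A)$ is freely determined by its values $f(x_1), \dots, f(x_r) \in A$ on the generators, so $Z^1(\Gamma, A) \cong A^r$ and hence $\dim_{\mathbb{F}_q} Z^1 = r$. The coboundaries $B^1(\Gamma, A)$ are the principal crossed homomorphisms $g \mapsto g \cdot a - a$ for $a \in A$, which is precisely the image of $d^0(a) = (x_1 \cdot a - a, \dots, x_r \cdot a - a)$. Since $A$ is one-dimensional, the $\Gamma$-action is through a character $\chi \colon \Gamma \to \mathbb{F}_q^\times$, and the hypothesis that $A$ is \emph{non-trivial} means $\chi(x_i) \neq 1$ for some generator $x_i$; consequently $A^\Gamma = \ker d^0 = 0$, so $d^0$ is injective and $B^1(\Gamma, A) \cong A$ has dimension one. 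Therefore $b^1_A = \dim Z^1 - \dim B^1 = r - 1$.

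The computation is essentially routine once the resolution is in place, and I do not expect a genuine obstacle; the only point requiring care is the non-triviality hypothesis, which is exactly what forces $A^\Gamma = 0$ and thus makes $B^1$ one-dimensional rather than zero. As a consistency check one can instead run the Euler-characteristic argument: since $\Gamma$ has finite classifying space with $\chi(\Gamma) = 1 - r$, one has $\dim H^0(\Gamma, A) - \dim H^1(\Gamma, A) = (1-r)\dim_{\mathbb{F}_q} A = 1 - r$, and $H^0(\Gamma, A) = A^\Gamma = 0$ for non-trivial $A$ again yields $b^1_A = r-1$.
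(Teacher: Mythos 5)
Your proof is correct, and your primary argument takes a genuinely different (more explicit) route than the paper's, although your closing ``consistency check'' is essentially word-for-word the paper's entire proof. The paper argues purely by Euler characteristic: since $F_r$ has cohomological dimension one, $\dim H^0(\Gamma,A) - \dim H^1(\Gamma,A)$ equals the Euler characteristic $1-r$ computed with trivial coefficients, and non-triviality of $A$ forces $H^0(\Gamma,A)=A^\Gamma=0$, giving $b^1_A = r-1$ in two lines. Your main argument instead computes both terms directly from the two-term free resolution $0 \to \mathbb{Z}[\Gamma]^r \to \mathbb{Z}[\Gamma] \to \mathbb{Z} \to 0$: cocycles are freely determined by their values on the $r$ generators (no relations to check, precisely because $\Gamma$ is free), so $\dim Z^1 = r$, while $B^1 \cong A/A^\Gamma \cong A$ is one-dimensional, again using non-triviality, whence $b^1_A = r - 1$. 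The mathematical content is the same -- the Euler characteristic identity the paper invokes is exactly what your two-term complex encodes, and both proofs hinge on the same use of the hypothesis (killing $A^\Gamma$) -- but your version is more self-contained, since it does not presuppose that the alternating sum of cohomology dimensions is independent of the rank $1$ coefficient module, whereas the paper's version is shorter and isolates the only structural inputs needed (cohomological dimension one and vanishing of $H^0$), which is why it transfers with no extra work to the surface-group computations elsewhere in the paper.
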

\begin{proof}
Since $F_r$ has cohomological dimension one, equating Euler characteristics gives:
\begin{equation*}
\begin{aligned}
{\rm dim}( H^0(\Gamma , A) ) - {\rm dim}( H^1(\Gamma , A) ) &= {\rm dim}( H^0(\Gamma , \mathbb{F}_q) ) - {\rm dim}( H^1(\Gamma , \mathbb{F}_q) ) \\
& = 1 - r.
\end{aligned}
\end{equation*}
Since $A$ is non-trivial, we have $H^0(\Gamma , A) = 0$ and so ${\rm dim}( H^1(\Gamma , A) ) = r-1$.
\end{proof}

\subsection{Case of $GL_2$}
By Theorem \ref{thmcountingredrank2}, we have:
\begin{equation*}
\begin{aligned}
A_{GL_2}(q) &= (q-1)^r(q^3-q)^{r-1} + (q-1)^r \frac{q}{2} (q+1)^{r-1} + (q-1)^r \frac{(q-2)}{2} (q-1)^{r-1} \\
& \; \; \; \; \; -(q-1)^r \frac{(q^{r-1}-1)}{(q-1)}  - (q-1)^r \left( (q-1)^r - 1 \right) \frac{(q^{r-1} - 1)}{(q-1)} \\
&= (q-1)^r(q^3-q)^{r-1} + (q-1)^r \frac{q}{2} (q+1)^{r-1} + (q-1)^r \frac{(q-2)}{2} (q-1)^{r-1} \\
& \; \; \; \; \; -(q-1)^r(q-1)^{r-1}(q^{r-1} - 1) \\
&= (q-1)^r \left(  (q^3-q)^{r-1} -(q^2-q)^{r-1} + q\left( \frac{1}{2}(q+1)^{r-1} + \frac{1}{2}(q-1)^{r-1}\right) \right).
\end{aligned}
\end{equation*}
This agrees with \cite{more}. This is polynomial count and hence gives the $E$-polynomial of the corresponding complex character variety.

\subsection{Case of $SL_2$}

By Theorem \ref{thmcountingredrank2}, we have:
\begin{equation*}
\begin{aligned}
A_{SL_2}(q) &= (q^3-q)^{r-1} + \left( 1 - \frac{1}{q+1} \right) \frac{(q+1)^r}{2} + \left( 1 - \frac{1}{q-1} \right) \frac{(q-1)^r}{2} \\
& \; \; \; \; \; -2^r[r-1]_q - \sum_{ \{ A | A^2 \neq 1 \} } [r-1]_q \\
&= (q^3-q)^{r-1} + \frac{q}{2}(q+1)^{r-1} + \frac{q-2}{2}(q-1)^{r-1} - 2^r [r-1]_q \\
& \; \; \; \; \; -\left( (q-1)^r - 2^r \right) [r-1]_q \\
&= (q^3-q)^{r-1} + \frac{q}{2}(q+1)^{r-1} + \frac{q}{2}(q-1)^{r-1} - (q-1)^{r-1} \\
& \; \; \; \; \; - (q-1)^{r-1}( q^{r-1} - 1)\\
&= (q^3-q)^{r-1} + \frac{q}{2}(q+1)^{r-1} + \frac{q}{2}(q-1)^{r-1} - q^{r-1}(q-1)^{r-1} \\
&= (q^3-q)^{r-1} - (q^2-q)^{r-1} + q\left( \frac{1}{2}(q+1)^{r-1} + \frac{1}{2}(q-1)^{r-1} \right).
\end{aligned}
\end{equation*}
This is polynomial count and agrees with \cite{more}.

\subsection{Fundamental groups of compact orientable surfaces}

In this section $\Gamma = \pi_1(\Sigma_g)$, where $\Sigma_g$ is a compact oriented surface of genus $g \ge 1$.

\subsection{Case of $GL_2$}

Assume that $q$ is odd. By Theorem \ref{thmcountingredrank2}, we have:

\begin{equation*}
\begin{aligned}
A_{GL_2}(q) &= \frac{ | Hom(\Gamma , GL_2(\mathbb{F}_q)) |}{| PGL_2(\mathbb{F}_q)|} + \left( 1 - \frac{1}{q+1} \right) \frac{(q^2-1)^{2g}}{2} + \left( 1 - \frac{1}{q-1} \right) \frac{(q-1)^{4g}}{2} \\
& \; \; \; \; \; -(q-1)^{2g}[2g-1]_q  - (q-1)^{2g}\left( (q-1)^{2g} - 1 \right) [2g-2]_q \\
&= \frac{ | Hom(\Gamma , GL_2(\mathbb{F}_q)) |}{| PGL_2(\mathbb{F}_q)|} + \frac{q}{2}(q+1)^{2g-1}(q-1)^{2g} + \frac{(q-2)}{2}(q-1)^{4g-1} \\
& \; \; \; \; \; -(q-1)^{4g-1}(q^{2g-2}-1) - (q-1)^{2g}q^{2g-2} \\
&= \frac{ | Hom(\Gamma , GL_2(\mathbb{F}_q)) |}{| PGL_2(\mathbb{F}_q)|} + \frac{q}{2}(q+1)^{2g-1}(q-1)^{2g} + \frac{q}{2}(q-1)^{4g-1} \\
& \; \; \; \; \; -(q-1)^{4g-1}q^{2g-2} - (q-1)^{2g}q^{2g-2} \\
&= \frac{ | Hom(\Gamma , GL_2(\mathbb{F}_q)) |}{| PGL_2(\mathbb{F}_q)|} \\
& \; \; \; \; \; +(q-1)^{2g} \left( q\left( \frac{1}{2}(q+1)^{2g-1} + (q-1)^{2g-1} \right) -(q-1)^{2g-1}q^{2g-2} - q^{2g-2} \right).
\end{aligned}
\end{equation*}

By Equation (\ref{equhomcos}) and the character table for $GL_2(\mathbb{F}_q)$, we also have:

\begin{equation*}
\begin{aligned}
\frac{ | Hom(\Gamma , GL_2(\mathbb{F}_q)) |}{| GL_2(\mathbb{F}_q)|} &= \sum_{\chi} \left( \frac{|GL_2(\mathbb{F}_q)|}{\chi(1)} \right)^{2g-2} \\
&= \frac{(q-1)(q-2)}{2}(q^2-q)^{2g-2}(q-1)^{2g-2} + \frac{q(q-1)}{2}(q^2+q)^{2g-2}(q-1)^{2g-2} \\
& \; \; \; \; \; + (q-1)^{2g-1}(q^3-q)^{2g-2} + (q-1)^{2g-1}(q^2-1)^{2g-2}.
\end{aligned}
\end{equation*}
Hence:

\begin{equation}\label{equhomintogl2}
\begin{aligned}
\frac{ | Hom(\Gamma , GL_2(\mathbb{F}_q)) |}{| PGL_2(\mathbb{F}_q)|} &= \frac{(q-2)}{2}(q^2-q)^{2g-2}(q-1)^{2g} + \frac{q}{2}(q^2+q)^{2g-2}(q-1)^{2g} \\
& \; \; \; \; \; + (q^3-q)^{2g-2}(q-1)^{2g} + (q^2-1)^{2g-2}(q-1)^{2g}.
\end{aligned}
\end{equation}
This gives:

\begin{equation*}
\begin{aligned}
A_{GL_2}(q) &= (q-1)^{2g} \left( (q^3-q)^{2g-2} + (q^2-1)^{2g-2} + q\left( \frac{1}{2}(q^2+q)^{2g-2} + \frac{1}{2}(q^2-q)^{2g-2} \right) \right. \\
& \; \; \; \; \; \left. -q(q^2-q)^{2g-2} -q^{2g-2} + q\left( \frac{1}{2}(q+1)^{2g-1} + \frac{1}{2}(q-1)^{2g-1} \right) \right).
\end{aligned}
\end{equation*}

\noindent This is polynomial count. Note that it is $(q-1)^{2g}$ times a polynomial in $q$ but this polynomial is not the corresponding $E$-polynomial for $SL_2$.

\subsection{Case of $SL_2$}

We again assume that $q$ is odd. Then by Theorem \ref{thmcountingredrank2}, we get:

\begin{equation*}
\begin{aligned}
A_{SL_2}(q) &= \frac{ | Hom(\Gamma , SL_2(\mathbb{F}_q)) |}{|PGL_2(\mathbb{F}_q)|} + \left( 1 - \frac{1}{q+1} \right) \frac{(q+1)^{2g}}{2} + \left( 1 - \frac{1}{q-1} \right) \frac{(q-1)^{2g}}{2} \\
& \; \; \; \; \; -2^{2g}[2g-1]_q  -\left( (q-1)^{2g} - 2^{2g} \right) [2g-2]_q \\
&= \frac{ | Hom(\Gamma , SL_2(\mathbb{F}_q)) |}{|PGL_2(\mathbb{F}_q)|} + \frac{q}{2}(q+1)^{2g-1} + \frac{q-2}{2}(q-1)^{2g-1} \\
& \; \; \; \; \; - (q-1)^{2g-1}(q^{2g-2}-1) -2^{2g}q^{2g-2} \\
&= \frac{ | Hom(\Gamma , SL_2(\mathbb{F}_q)) |}{|PGL_2(\mathbb{F}_q)|} + q\left( \frac{1}{2}(q+1)^{2g-1} + \frac{1}{2}(q-1)^{2g-1} \right) -q^{2g-2}\left( (q-1)^{2g-1} + 2^{2g} \right) \\
&= \frac{ | Hom(\Gamma , SL_2(\mathbb{F}_q)) |}{|PGL_2(\mathbb{F}_q)|} + q\left( \frac{1}{2}(q+1)^{2g-1} + \frac{1}{2}(q-1)^{2g-1} \right) -(q-1)(q^2-q)^{2g-2} -2^{2g}q^{2g-2}.
\end{aligned}
\end{equation*}
Using Equation \ref{equhomcos} and the character table for $SL_2(\mathbb{F}_q)$, we have:

\begin{equation*}
\begin{aligned}
\frac{ | Hom(\Gamma , SL_2(\mathbb{F}_q)) |}{|SL_2(\mathbb{F}_q)|} &= \sum_{\chi} \left( \frac{|SL_2(\mathbb{F}_q)|}{\chi(1)} \right)^{2g-2} \\
&= \frac{(q-3)}{2}(q^2-q)^{2g-2} + 2^{2g-1} (q^2-q)^{2g-2} + \frac{(q-1)}{2}( q^2+q)^{2g-2} \\
& \; \; \; \; \; +2^{2g-1} (q^2+q)^{2g-2} + (q^3-q)^{2g-2} + (q^2-1)^{2g-2},
\end{aligned}
\end{equation*}
so in total we get:

\begin{equation*}
\begin{aligned}
A_{SL_2}(q) &= (q^3-q)^{2g-2} + (q^2-1)^{2g-2} - q(q^2-q)^{2g-2} - 2^{2g}q^{2g-2} \\
& \; \; \; \; \; + \frac{(q-1)}{2}\left(  (q^2+q)^{2g-2} + (q^2-q)^{2g-2} \right) + 2^{2g-1}\left(  (q^2+q)^{2g-2} + (q^2-q)^{2g-2} \right) \\
& \; \; \; \; \; + q\left( \frac{1}{2}(q+1)^{2g-1} + \frac{1}{2}(q-1)^{2g-1} \right).
\end{aligned}
\end{equation*}

\noindent This calculation recovers at once the results of \cite{lomune}, which computed the cases $g=1,2$, \cite{mamu1} which computed the case $g=3$ and \cite{mamu2} which computed the case $g > 3$.
\vspace{0.1cm}

\subsection{Fundamental groups of compact non-orientable surfaces}

In this section $\Gamma = \pi_1( \Sigma_k)$, where $\Sigma_k$ is a connected sum of $k \ge 1$ copies of $\mathbb{RP}^2$. Recall that $\Gamma$ has the presentation

\begin{equation*}
\Gamma = \langle a_1 , a_2 \dots , a_k \; | \; a_1^2 a_2^2 \dots a_k^2 = 1 \rangle.
\end{equation*}

\noindent Let $A_{\rm orn}$ denote the representation $A_{\rm orn} \colon \Gamma \to \mathbb{F}_q^\times$ sending $a_1, a_2 , \dots , a_k$ to $-1 \in \mathbb{F}_q^\times$.

\begin{lemma}
Let $q = 1 \; ({\rm mod} \; 4)$. Then there exists $L \colon \Gamma \to \mathbb{F}_q^\times$ such that $L^2 = A_{\rm orn}$ if and only if $k$ is even. When $k$ is even there exists exactly $m_2 = 2^k$ such $L$.
\end{lemma}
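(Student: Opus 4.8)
The plan is to reduce the statement to counting solutions of $\ell_i^2 = -1$ in $\mathbb{F}_q^\times$ subject to the single relation coming from the presentation. First I would observe that since $\mathbb{F}_q^\times$ is abelian, any homomorphism $L \colon \Gamma \to \mathbb{F}_q^\times$ factors through the abelianization and is therefore specified freely by its values $\ell_i = L(a_i) \in \mathbb{F}_q^\times$, subject only to the image of the single relator $a_1^2 \cdots a_k^2$ being trivial, i.e. $\ell_1^2 \ell_2^2 \cdots \ell_k^2 = 1$. The condition $L^2 = A_{\rm orn}$ is then equivalent, generator by generator, to $\ell_i^2 = A_{\rm orn}(a_i) = -1$ for every $i$.

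Next I would use the hypothesis $q = 1 \; ({\rm mod} \; 4)$: this guarantees that $-1$ is a square in the cyclic group $\mathbb{F}_q^\times$, since the unique element of order $2$ lies in the image of squaring precisely when $4$ divides $q-1$. Hence there is some $\iota \in \mathbb{F}_q^\times$ with $\iota^2 = -1$, and the full solution set of $x^2 = -1$ is exactly $\{\iota, -\iota\}$, which has two elements. So each constraint $\ell_i^2 = -1$ has exactly two solutions and forces $\ell_i \in \{\pm \iota\}$.

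The decisive step is to feed these values back into the relator. If $\ell_i^2 = -1$ for all $i$, then automatically $\ell_1^2 \cdots \ell_k^2 = (-1)^k$, and well-definedness of $L$ requires this product to equal $1$, which happens if and only if $k$ is even. This yields both directions at once: when $k$ is odd no choice of square roots can satisfy the relator, so no such $L$ exists; when $k$ is even the relator is satisfied automatically for every choice, so each of the $2^k$ independent selections $\ell_i \in \{\pm \iota\}$ determines a valid homomorphism $L$ with $L^2 = A_{\rm orn}$, giving exactly $2^k$ such $L$. Finally I would identify $2^k = m_2$ by noting that a homomorphism $\Gamma \to \mathbb{Z}_2$ is an arbitrary assignment of the $a_i$ into $\mathbb{Z}_2$, since the relator $a_1^2 \cdots a_k^2$ maps to $0$ automatically, so there are precisely $2^k$ of them.

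I do not anticipate a genuine obstacle here; the only point requiring care is the bookkeeping around well-definedness, namely recognizing that the relator imposes $(-1)^k = 1$ rather than the naive $\prod_i \ell_i = 1$, and invoking $q = 1 \; ({\rm mod} \; 4)$ at exactly the right place to produce the two square roots of $-1$.
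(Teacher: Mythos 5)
Your proposal is correct and follows essentially the same argument as the paper: each $L(a_i)$ must be one of the two square roots of $-1$ (which exist exactly because $q = 1 \; ({\rm mod}\; 4)$), the relator then maps to $(-1)^k$, forcing $k$ even, and for even $k$ every one of the $2^k$ choices of square roots defines a valid homomorphism. The only addition beyond the paper's proof is your explicit verification that $m_2 = 2^k$, which the paper leaves implicit.
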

\begin{proof}
Suppose $L \colon \Gamma \to \mathbb{F}_q^\times$ is a square root of $A_{\rm orn}$. So for each $i$, $L$ sends $a_i$ to a square root of $-1$. Thus $L$ sends $a_1^2 a_2^2 \dots a_k^2$ to $(-1)^k$. As this equals $1$, we must have that $k$ is even. Since $q = 1 \;({\rm mod} \; 4)$, we have that $-1$ has two square roots $\mathbb{F}_q^\times$. It follows that when $k$ is even, we get $2^k$ square roots of $A_{\rm orn}$.
\end{proof}

\begin{lemma}
Let $A \colon \Gamma \to \mathbb{F}_q^\times$ be different from $1$ and $A_{\rm orn}$. Then ${\rm dim}( H^1(\Gamma , A) ) = k-2$. We also have that ${\rm dim}( H^1(\Gamma , A_{\rm orn}) ) = k-1$.
\end{lemma}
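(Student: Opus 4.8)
The plan is to exploit the asphericity of $\Sigma_k$. For $k \ge 2$ the surface $\Sigma_k$ has non-positive Euler characteristic and is therefore a $K(\Gamma,1)$; equivalently, the relator $r = a_1^2 a_2^2 \cdots a_k^2$ is not a proper power, so by Lyndon's theorem the presentation $2$-complex is aspherical and its cellular chain complex (lifted to the universal cover) gives a length-two free resolution of $\mathbb{Z}$ over $\mathbb{Z}\Gamma$,
\[
0 \to \mathbb{Z}\Gamma \xrightarrow{\partial_2} (\mathbb{Z}\Gamma)^k \xrightarrow{\partial_1} \mathbb{Z}\Gamma \to \mathbb{Z} \to 0,
\]
with $\partial_1(e_i) = a_i - 1$ and $\partial_2(e) = \sum_{j} \frac{\partial r}{\partial a_j}\, e_j$ given by the Fox free derivatives. (The degenerate case $k=1$, where $\Gamma = \mathbb{Z}_2$, is immediate since $2$ is invertible in $\mathbb{F}_q$, forcing $H^1 = 0 = k-1$.) Applying $\mathrm{Hom}_{\mathbb{Z}\Gamma}(-,A)$ and writing $t_i = A(a_i) \in \mathbb{F}_q^\times$ for the scalar by which $a_i$ acts on the rank-one module $A = \mathbb{F}_q$, I obtain the three-term complex
\[
0 \to \mathbb{F}_q \xrightarrow{d^0} \mathbb{F}_q^{\,k} \xrightarrow{d^1} \mathbb{F}_q \to 0
\]
whose cohomology is $H^\bullet(\Gamma , A)$.

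First I would compute the two differentials. Dualizing $\partial_1$ gives $d^0(x) = \big((t_1-1)x, \dots, (t_k-1)x\big)$. A direct computation of the Fox derivatives yields $\frac{\partial r}{\partial a_j} = a_1^2 \cdots a_{j-1}^2 (1 + a_j)$, so dualizing $\partial_2$ gives $d^1(x_1,\dots,x_k) = \sum_{j} t_1^2 \cdots t_{j-1}^2 (1 + t_j)\, x_j$. From $d^0$ I read off $H^0(\Gamma,A) = \ker d^0 = 0$ whenever $A \neq 1$, since then some $t_i \neq 1$; this disposes of $H^0$ for all representations in the lemma.

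The key step is $H^2(\Gamma,A) = \mathrm{coker}\, d^1$. Because every coefficient $t_1^2 \cdots t_{j-1}^2$ is a nonzero scalar, the functional $d^1$ vanishes identically precisely when $1 + t_j = 0$ for all $j$, i.e.\ when $t_j = -1$ for every $j$ — which is exactly the condition $A = A_{\rm orn}$. Hence $H^2(\Gamma,A) = \mathbb{F}_q$ if $A = A_{\rm orn}$, while $H^2(\Gamma,A) = 0$ otherwise (there $d^1$ is a nonzero, hence surjective, functional). This is where the distinguished role of $A_{\rm orn}$, the orientation character sending each crosscap generator to $-1$, emerges, and it is the main point of the argument; conceptually it reflects twisted Poincar\'e duality on the non-orientable surface with dualizing character $A_{\rm orn}$, but the explicit cochain complex makes it entirely elementary.

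Finally I would combine these via the Euler characteristic. Since $\chi(\Sigma_k) = 2-k$ and $A$ has rank one, the alternating sum of dimensions in the complex gives $\dim H^0 - \dim H^1 + \dim H^2 = 2-k$, that is $\dim H^1(\Gamma,A) = k-2 + \dim H^0(\Gamma,A) + \dim H^2(\Gamma,A)$. For $A \neq 1, A_{\rm orn}$ both $H^0$ and $H^2$ vanish, giving $\dim H^1 = k-2$; for $A = A_{\rm orn}$ one has $H^0 = 0$ but $\dim H^2 = 1$, giving $\dim H^1 = k-1$. I expect no genuine obstacle beyond bookkeeping: the only subtlety is computing the Fox derivatives correctly so as to pin down the exact vanishing locus of $d^1$, which is precisely what singles out $A_{\rm orn}$.
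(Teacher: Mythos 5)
Your proof is correct, but it takes a genuinely different route from the paper's. The paper's entire proof is one sentence: the lemma ``follows from Poincar\'e duality and the fact that $\Sigma_k$ has Euler characteristic $2-k$'' --- i.e.\ it invokes twisted Poincar\'e duality for local systems on the non-orientable surface, which gives $H^2(\Gamma,A)\cong H_0(\Gamma, A\otimes A_{\rm orn})$, one-dimensional exactly when $A=A_{\rm orn}$ (since $A_{\rm orn}^2=1$) and zero otherwise; combined with $H^0(\Gamma,A)=0$ for $A\neq 1$ and the Euler characteristic, this yields both dimension counts. You establish the same vanishing/non-vanishing statements by hand: asphericity of the presentation $2$-complex (valid for $k\ge 2$; your separate treatment of $k=1$, where the statement for $A\neq 1,A_{\rm orn}$ is vacuous and $H^1(\mathbb{Z}_2,A_{\rm orn})=0$ because $2$ is invertible in $\mathbb{F}_q$, is correct) gives the length-two free resolution, and the Fox derivatives $\partial r/\partial a_j = a_1^2\cdots a_{j-1}^2(1+a_j)$ pin down exactly when the top differential $d^1$ vanishes, namely when every $t_j=-1$, i.e.\ $A=A_{\rm orn}$. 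One small economy: you do not actually need the Euler-characteristic step, since $\dim H^1=\dim\ker d^1-\dim\operatorname{im}d^0$ can be read off directly ($\ker d^1$ has dimension $k-1$ or $k$ according as $d^1\neq 0$ or $d^1=0$, and $\operatorname{im}d^0$ has dimension $1$ whenever $A\neq 1$). What your argument buys is that it is elementary and self-contained, replacing duality for twisted coefficients on a non-orientable surface by an explicit cochain computation; what the paper's buys is brevity and the conceptual identification of $A_{\rm orn}$ as the orientation (dualizing) character, a point that appears in your write-up only as a closing remark.
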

\begin{proof}
Follows from Poincar\'e duality and the fact that $\Sigma_k$ has Euler characteristic $2-k$.
\end{proof}

\subsection{Case of $GL_2$}

Assume $q = 1 \; ({\rm mod} \; 4)$. By Theorem \ref{thmcountingredrank2}, we have:
\begin{equation*}
\begin{aligned}
A_{GL_2}(q) &= \frac{ | Hom(\Gamma , GL_2(\mathbb{F}_q)) |}{| PGL_2(\mathbb{F}_q)|} + q(q-1)^{k-1}(q+1)^{k-2} + 2(q-2)(q-1)^{2k-3} \\
& \; \; \; \; \; -2(q-1)^{k-1}[k-2]_q -2(q-1)^{k-1} 
\left( 2(q-1)^{k-1} - 1 \right) [k-2]_q
-2(q-1)^{k-1}q^{k-2} \\
&= \frac{ | Hom(\Gamma , GL_2(\mathbb{F}_q)) |}{| PGL_2(\mathbb{F}_q)|} + q(q-1)^{k-1}(q+1)^{k-2} + 2q(q-1)^{2k-3} -4(q-1)^{2k-3}\\
& \; \; \; \; \; -4(q-1)^{2k-3}(q^{k-2} -1) -2(q-1)^{k-1}q^{k-2} \\
&= \frac{ | Hom(\Gamma , GL_2(\mathbb{F}_q)) |}{| PGL_2(\mathbb{F}_q)|} +
q(q-1)^{k-1}\left( (q+1)^{k-2} + 2(q-1)^{k-2} \right) \\
& \; \; \; \; \; -4(q-1)^{2k-3}q^{k-2} -2(q-1)^{k-1}q^{k-2}.
\end{aligned}
\end{equation*}
By Equation (\ref{equhomcnos}), the character table and Frobenius-Schur indicators for $GL_2(\mathbb{F}_q)$, we have:

\begin{equation*}
\begin{aligned}
\frac{ | Hom(\Gamma , GL_2(\mathbb{F}_q)) |}{(q-1)| PGL_2(\mathbb{F}_q)|} &= \sum_{\chi} \left( \frac{|GL_2(\mathbb{F}_q)|}{\chi(1)} \right)^{k-2} \nu_2^k(\chi) \\
&= \frac{(q-1)}{2}q^{k-2}(q-1)^{2k-4} + \frac{(q-1)}{2} (q^3-q)^{k-2} \\
& \; \; \; \; \; + 2q^{k-2}(q+1)^{k-2}(q-1)^{2k-4} + 2(q+1)^{k-2}(q-1)^{2k-4} \\
& \! \! \! \! \! \! \! \! \! \! \! \! \! \! \! \! \! = (q-1)^{k-2}\left( \frac{(q-1)}{2}(q^2-q)^{k-2} +\frac{(q-1)}{2}(q^2+q)^{k-2} + 2(q^3-q)^{k-2} + 2(q^2-1)^{k-2} \right).
\end{aligned}
\end{equation*}
Thus:
\begin{equation*}
\begin{aligned}
\frac{A_{GL_2}(q)}{(q-1)^{k-1}} &= \frac{(q-1)}{2}(q^2-q)^{k-2} +\frac{(q-1)}{2}(q^2+q)^{k-2} + 2(q^3-q)^{k-2} + 2(q^2-1)^{k-2} \\
& \; \; \; \; \; + q\left( (q+1)^{k-2} + 2(q-1)^{k-2} \right) -4(q-1)^{k-2}q^{k-2} -2q^{k-2}.
\end{aligned}
\end{equation*}
This is polynomial count. When $k >3$, the leading order term in $A_{GL_2}(q)$ is $2q^{4k-7}$, so this character variety has two irreducible components of dimension $4k-7$. Let $\rho \colon \Gamma \to GL_2(\mathbb{F}_q)$ be a representation. Then $det( \rho( a_1^2 \dots a_k^2) ) = 1$, so $det( \rho (a_1 \dots a_k ) ) = \pm 1$. The two components are distinguished by the value of $det( \rho( a_1 \dots a_k ))$, i.e. whether the determinant line is a square or not.

\subsection{Case of $SL_2$}

We again assume $q = 1 \; ({\rm mod} \; 4)$. First suppose $k$ is even. By Theorem \ref{thmcountingredrank2}, we have:
\begin{equation*}
\begin{aligned}
A_{SL_2}(q) &= \frac{ | Hom(\Gamma , SL_2(\mathbb{F}_q)) |}{|PGL_2(\mathbb{F}_q)|} + q(q+1)^{k-2} + (q-2)(q-1)^{k-2} \\
& \; \; \; \; \; -2^k [k-2]_q  - \left( 2(q-1)^{k-1} - 2^{k+1} \right) [k-2]_q - 2^k [k-1]_q \\
&= \frac{ | Hom(\Gamma , SL_2(\mathbb{F}_q)) |}{|PGL_2(\mathbb{F}_q)|} + q(q+1)^{k-2} + q(q-1)^{k-2} -2(q-1)^{k-2} \\
& \; \; \; \; \; +2^k [k-2]_q  - 2 (q-1)^{k-2}(q^{k-2} - 1) - 2^k [k-1]_q \\
&= \frac{ | Hom(\Gamma , SL_2(\mathbb{F}_q)) |}{|PGL_2(\mathbb{F}_q)|} + q(q+1)^{k-2} + q(q-1)^{k-2} -2^k q^{k-2} - 2(q^2-q)^{k-2}.
\end{aligned}
\end{equation*}
Second, suppose $k$ is odd. Then:
\begin{equation*}
\begin{aligned}
A_{SL_2}(q) &= \frac{ | Hom(\Gamma , SL_2(\mathbb{F}_q)) |}{|SL_2(\mathbb{F}_q)|} + q(q+1)^{k-2} + (q-2)(q-1)^{k-2} \\
& \; \; \; \; \; -2^k [k-2]_q  - \left( 2(q-1)^{k-1} - 2^{k} \right) [k-2]_q \\
&=\frac{ | Hom(\Gamma , SL_2(\mathbb{F}_q)) |}{|SL_2(\mathbb{F}_q)|} + q(q+1)^{k-2} + (q-2)(q-1)^{k-2} \\
& \; \; \; \; \; - 2(q-1)^{k-2}(q^{k-2} - 1) \\
&=\frac{ | Hom(\Gamma , SL_2(\mathbb{F}_q)) |}{|SL_2(\mathbb{F}_q)|} + q(q+1)^{k-2} + q(q-1)^{k-2} -2(q^2-q)^{k-2}.
\end{aligned}
\end{equation*}
By Equation (\ref{equhomcnos}), the character table and Frobenius-Schur indicators for $SL_2(\mathbb{F}_q)$, we find, for $k$ even, that:
\begin{equation*}
\begin{aligned}
 \frac{ | Hom(\Gamma , SL_2(\mathbb{F}_q)) |}{|SL_2(\mathbb{F}_q)|} &= \sum_{\chi} \left( \frac{|SL_2(\mathbb{F}_q)|}{\chi(1)} \right) \nu_2(\chi)^k \\
&= \frac{(q-3)}{2}(q^2-q)^{k-2} + 2^{k-1}(q^2-q)^{k-2} + \frac{(q-1)}{2}(q^2+q)^{k-2} \\
& \; \; \; \; \; +2^{k-1}(q^2+q)^{k-2} + (q^3-q)^{k-2} + (q^2-1)^{k-2} \\
&= \frac{(q-1)}{2}\left( (q^2+q)^{k-2} + (q^2-q)^{k-2} \right) + (2^{k-1}-1)(q^2-q)^{k-2} \\
& \; \; \; \; \; +2^{k-1}(q^2+q) + (q^3-q)^{k-2} + (q^2-1)^{k-2}.
\end{aligned}
\end{equation*}
For $k$ odd, we find:
\begin{equation*}
\begin{aligned}
 \frac{ | Hom(\Gamma , SL_2(\mathbb{F}_q)) |}{|SL_2(\mathbb{F}_q)|} &= \sum_{\chi} \left( \frac{|SL_2(\mathbb{F}_q)|}{\chi(1)} \right) \nu_2(\chi)^k \\
&= -(q^2-q)^{k-2} + 2^{k-1}(q^2-q)^{k-2} \\
& \; \; \; \; \; -2^{k-1}(q^2+q)^{k-2} + (q^3-q)^{k-2} + (q^2-1)^{k-2}\\
&= (q^3-q)^{k-2} + (q^2-1)^{k-2} -2^{k-1}(q^2+q)^{k-2} + (2^{k-1}-1)(q^2-q)^{k-2}.
\end{aligned}
\end{equation*}
So when $k$ is even, we have:

\begin{equation*}
\begin{aligned}
A_{SL_2}(q) &= \frac{(q-1)}{2}\left( (q^2+q)^{k-2} + (q^2-q)^{k-2} \right) + (2^{k-1}-1)(q^2-q)^{k-2} \\
& \; \; \; \; \; +2^{k-1}(q^2+q) + (q^3-q)^{k-2} + (q^2-1)^{k-2} \\
& \; \; \; \; \; +q(q+1)^{k-2} + q(q-1)^{k-2} -2^k q^{k-2} - 2(q^2-q)^{k-2} \\
&= \frac{(q-1)}{2}\left( (q^2+q)^{k-2} + (q^2-q)^{k-2} \right) + (2^{k-1}-3)(q^2-q)^{k-2} \\
& \; \; \; \; \; +2^{k-1}(q^2+q) + (q^3-q)^{k-2} + (q^2-1)^{k-2} \\
& \; \; \; \; \; +q(q+1)^{k-2} + q(q-1)^{k-2} -2^k q^{k-2} \\
&= (q^3-q)^{k-2} + (q^2-1)^{k-2} + \left( \frac{(q-1)}{2} + 2^{k-1} \right)\left( (q^2+q)^{k-2} + (q^2-q)^{k-2} \right) \\
& \; \; \;\; \; -3(q^2-q)^{k-2} - 2^k q^{k-2} +q\left( (q+1)^{k-2} + (q-1)^{k-2} \right).
\end{aligned}
\end{equation*}

\noindent For example, putting $k=2$, $\Sigma_k$ is the Klein bottle and we get:

\begin{equation*}
\begin{aligned}
A_{SL_2}(q) &= 1 + 1 + ( q+3) -3 - 4 + 2q \\
&= 3q-2.
\end{aligned}
\end{equation*}
When $k$ is odd, we have
\begin{equation*}
\begin{aligned}
A_{SL_2}(q) &= (q^3-q)^{k-2} + (q^2-1)^{k-2} -2^{k-1}(q^2+q)^{k-2} + (2^{k-1}-1)(q^2-q)^{k-2} \\
& \; \; \; \; \;  + q(q+1)^{k-2} + q(q-1)^{k-2} -2(q^2-q)^{k-2} \\
&= (q^3-q)^{k-2} + (q^2-1)^{k-2} -2^{k-1}(q^2+q)^{k-2} + (2^{k-1}-3)(q^2-q)^{k-2} \\
& \; \; \; \; \;  + q \left( (q+1)^{k-2} + (q-1)^{k-2} \right).
\end{aligned}
\end{equation*}
For example, when $k = 3$, so $\Sigma_k$ is a connected sum of three copies of $\mathbb{RP}^2$, we get:
\begin{equation*}
\begin{aligned}
A_{SL_2}(q) &= (q^3-q) + (q^2-1) - 4(q^2+q) + (q^2 -q) + 2q^2 \\
&= q^3 - 6q - 1.
\end{aligned}
\end{equation*}
These are polynomial count. The $E$-polynomials of the $SL_2(\mathbb{C})$-character varieties of $\pi_1(\Sigma_k)$ were computed for $k=2,3$ in \cite{mart}. The above examples show that our general formula agrees with \cite{mart} in these cases.\\

\noindent Setting $q=1$, we obtain:

\begin{theorem}
For $k>2$, the Euler characteristic of $Rep( \pi_1(\Sigma_k) , SL_2(\mathbb{C}))$ is $2^{2k-3} -3 \cdot 2^{k-2}$ if $k$ is even, $-2^{2k-3} + 2^{k-2}$ if $k$ is odd.
\end{theorem}
\vspace{0.2cm}

\subsection{Torus knot groups}

Let $a,b$ be coprime integers and $\Gamma = \langle x , y \; | \; x^a = y^b \rangle$ be the fundamental group of the complement in $S^3$ of an $(a,b)$-torus knot. Throughout we assume $q = 1 \; ({\rm mod} \; ab)$ and $q = 1 \; ({\rm mod} \; 4)$.
\vspace{0.1cm}

\begin{lemma}
Let $A \colon \Gamma \to \mathbb{F}_q^\times$. Then there exists a unique $w \in \mathbb{F}_q^\times$ such that $A(x) = w^b$, $A(y) = w^a$. In particular $m_{q-1} = q-1$.
\end{lemma}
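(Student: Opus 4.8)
The plan is to exploit that the target $\mathbb{F}_q^\times$ is abelian, so a homomorphism $A \colon \Gamma \to \mathbb{F}_q^\times$ is determined entirely by the pair $(s,t) = (A(x), A(y)) \in (\mathbb{F}_q^\times)^2$, subject only to the single relation $s^a = t^b$ coming from $x^a = y^b$; the commutator relation imposes nothing further since the target is abelian. Thus proving the lemma reduces to showing that $w \mapsto (w^b, w^a)$ is a bijection from $\mathbb{F}_q^\times$ onto the set of admissible pairs $\{(s,t) : s^a = t^b\}$. The whole argument rests on the coprimality of $a$ and $b$ rather than on the congruence hypotheses on $q$.

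First I would establish uniqueness via B\'ezout: since $\gcd(a,b) = 1$, choose integers $u,v$ with $ua + vb = 1$. If $w \in \mathbb{F}_q^\times$ satisfies $w^b = A(x)$ and $w^a = A(y)$, then $w = w^{ua+vb} = (w^a)^u (w^b)^v = A(y)^u A(x)^v$, so $w$ is forced. Next I would prove existence by taking this forced value $w = A(y)^u A(x)^v$ as a definition and checking directly, using the relation $A(x)^a = A(y)^b$, that $w^a = A(y)^{ua} A(x)^{va} = A(y)^{ua}(A(x)^a)^v = A(y)^{ua}(A(y)^b)^v = A(y)^{ua+vb} = A(y)$, and symmetrically $w^b = (A(y)^b)^u A(x)^{vb} = (A(x)^a)^u A(x)^{vb} = A(x)^{au+vb} = A(x)$. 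This confirms that every admissible $A$ comes from a unique $w$.

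Finally I would deduce the count. The construction just given, together with its inverse $w \mapsto (w^b, w^a)$ (which lands in the admissible set because $(w^b)^a = w^{ab} = (w^a)^b$), gives a bijection between $Hom(\Gamma, \mathbb{F}_q^\times)$ and $\mathbb{F}_q^\times$. Since $\mathbb{F}_q^\times$ is cyclic of order $q-1$, we have $Hom(\Gamma, \mathbb{F}_q^\times) = Hom(\Gamma, \mathbb{Z}_{q-1})$, whose cardinality is $m_{q-1}$ by definition, so $m_{q-1} = q-1$. There is no genuine obstacle here; the only point requiring a moment's care is verifying that, once the target is abelian, the presentation imposes exactly the one relation $s^a = t^b$ and nothing more.
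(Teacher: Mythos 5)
Your proof is correct, and it differs from the paper's in how existence is established. The paper works additively, identifying $\mathbb{F}_q^\times$ with $\mathbb{Z}_{q-1}$ and homomorphisms with solutions $(u,v)$ of $au = bv$; it then observes that $f(u,v) = au - bv$ is surjective (by B\'ezout), so its kernel has exactly $q-1$ elements, and that $g(w) = (bw, aw)$ is an injective map of $\mathbb{Z}_{q-1}$ into this kernel, whence $g$ is a bijection by a cardinality count. You instead stay multiplicative and construct the inverse explicitly: writing $ua + vb = 1$, you show the only candidate is $w = A(y)^u A(x)^v$ and then verify directly, using $A(x)^a = A(y)^b$, that this $w$ satisfies $w^b = A(x)$ and $w^a = A(y)$. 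The uniqueness/injectivity steps are essentially identical in the two proofs (both are the B\'ezout telescoping $w = w^{ua+vb}$), but where the paper gets surjectivity for free from finiteness, you get it from an explicit formula. Your version is constructive and never uses finiteness, so it in fact shows $Hom(\Gamma, G) \cong G$ via $w \mapsto (w^b, w^a)$ for an arbitrary abelian group $G$; the paper's counting argument is marginally shorter but is tied to the finite cyclic target. One small slip in your setup: the presentation $\langle x, y \mid x^a = y^b \rangle$ contains no commutator relation, so there is nothing for the abelian target to absorb --- the identification of $Hom(\Gamma, \mathbb{F}_q^\times)$ with pairs $(s,t)$ satisfying $s^a = t^b$ is just the universal property of a presentation and needs no abelianness at all. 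This does not affect the validity of your argument.
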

\begin{proof}
Recall that $\mathbb{F}_q^\times$ is cyclic of order $q-1$. Consider the map $f \colon \mathbb{Z}_{q-1} \oplus \mathbb{Z}_{q-1} \to \mathbb{Z}_{q-1}$ given by $f(u,v) = au - bv$. Since $a,b$ are coprime there exists $u,v$ such that $au-bv = 1$. Hence $f$ is surjective and so the kernel of $f$ has $q-1$ elements. Consider the map $g \colon \mathbb{Z}_{q-1} \to \mathbb{Z}_{q-1} \oplus \mathbb{Z}_{q-1}$ given by $g(w) = (bw,aw)$. Clearly $g$ maps to the kernel of $f$. Moreover $g$ is injective, for if $bw = aw = 0$, then $w = (au-bv)w = u(aw) - v(bw) = 0$. Thus every solution to $au=bv$ is of the form $u = bw$, $v = aw$.
\end{proof}

\begin{lemma}
Let $A \colon \Gamma \to \mathbb{F}_q^\times$. Then
\begin{equation*}
b^1_A = \begin{cases} 1 & \text{if } A = 1, \\ 
1 & \text{if } A(x^a) = A(y^b) = 1, \; \; A(x) \neq 1, \; \; A(y) \neq 1, \\
0 & \text{otherwise}. \end{cases}
\end{equation*}
\end{lemma}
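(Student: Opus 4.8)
The plan is to compute $H^*(\Gamma, A)$ directly from the two-generator one-relator presentation $\Gamma = \langle x, y \mid x^a = y^b \rangle$ using Fox's free differential calculus. Writing $r = x^a y^{-b}$, the presentation furnishes the beginning of a free resolution of $\mathbb{Z}$ over $\mathbb{Z}\Gamma$,
$$\mathbb{Z}\Gamma \xrightarrow{\partial_2} \mathbb{Z}\Gamma^2 \xrightarrow{\partial_1} \mathbb{Z}\Gamma \xrightarrow{\epsilon} \mathbb{Z} \to 0,$$
which is exact in the relevant range because the single relator $r$ generates the relation module (equivalently, the torus knot complement is a $K(\Gamma,1)$ whose presentation $2$-complex realises this resolution). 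Since this suffices to compute $H^0$ and $H^1$, I would apply $\mathrm{Hom}_{\mathbb{Z}\Gamma}(-, A)$ and obtain a cochain complex of $\mathbb{F}_q$-vector spaces $A \xrightarrow{d^0} A^2 \xrightarrow{d^1} A$, in which each group element acts on the $1$-dimensional module $A$ by the scalar given by the character; write $s = A(x)$ and $t = A(y)$. Note first that the standing hypothesis $q \equiv 1 \pmod{ab}$ forces $\gcd(ab, q) = 1$, so $a$ and $b$ are nonzero in $\mathbb{F}_q$; this will matter in the trivial case.

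Next I would evaluate the two differentials. The map $\partial_1$ has matrix $(x-1,\, y-1)$, so $d^0$ is multiplication by $(s-1,\, t-1)^{T}$; its rank is $1$ unless $s = t = 1$, i.e.\ unless $A = 1$. The Fox derivatives of $r$ are $\partial r/\partial x = 1 + x + \cdots + x^{a-1}$ and $\partial r/\partial y = -x^a(y^{-1} + \cdots + y^{-b})$, so $d^1$ is the row $\big(1 + s + \cdots + s^{a-1},\ -s^a t^{-b}(1 + t + \cdots + t^{b-1})\big)$. The key simplification is that a homomorphism $\Gamma \to \mathbb{F}_q^\times$ must respect the relation, giving $s^a = t^b = A(x^a)$ and hence $s^a t^{-b} = 1$; thus $d^1 = (P_a(s),\, -P_b(t))$ where $P_a(s) = 1 + s + \cdots + s^{a-1}$. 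Since $d^1 d^0 = (s^a - 1) - (t^b - 1) = 0$ automatically, I would then read off
$$b^1_A = \dim \ker d^1 - \operatorname{rank} d^0 = \big(2 - \operatorname{rank} d^1\big) - \operatorname{rank} d^0.$$

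Finally I would run the case analysis using the elementary fact that $P_a(s) = (s^a-1)/(s-1)$ vanishes exactly when $s \neq 1$ and $s^a = 1$, while $P_a(1) = a \neq 0$ in $\mathbb{F}_q$. If $A = 1$ then $\operatorname{rank} d^0 = 0$ and $d^1 = (a,\, -b) \neq 0$, so $b^1_A = 1$. If $A \neq 1$ then $\operatorname{rank} d^0 = 1$, so $b^1_A = 1 - \operatorname{rank} d^1$, which equals $1$ precisely when $P_a(s) = P_b(t) = 0$, i.e.\ when $A(x) \neq 1$, $A(y) \neq 1$ and $A(x^a) = A(y^b) = 1$, and equals $0$ in every other case. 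This reproduces the three cases exactly. The step requiring the most care is the justification that the truncated Fox resolution computes $H^1$ correctly (that the lone relator generates the full relation module), together with the bookkeeping that turns the Fox derivatives into the clean pair $(P_a(s),\, -P_b(t))$ via $s^a t^{-b} = 1$; the subsequent rank analysis is then routine.
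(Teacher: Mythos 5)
Your proof is correct and is essentially the paper's own argument in different packaging: your Fox-derivative differentials $d^0=(s-1,\,t-1)^{T}$ and $d^1=\bigl(P_a(s),\,-P_b(t)\bigr)$ are precisely the paper's coboundary formula $\alpha=(w^b-1)z$, $\beta=(w^a-1)z$ and cocycle condition $(1+w^b+\cdots+w^{ab-b})\alpha=(1+w^a+\cdots+w^{ab-a})\beta$ (with $s=w^b$, $t=w^a$), followed by the same rank analysis. One remark: the step you single out as delicate is not actually an issue, since exactness of $\mathbb{Z}\Gamma\to\mathbb{Z}\Gamma^{2}\to\mathbb{Z}\Gamma\to\mathbb{Z}\to 0$ at the two right-hand terms holds for \emph{any} presentation (the universal cover of the presentation $2$-complex is simply connected), which is all that is needed to compute $H^0$ and $H^1$; asphericity of the torus-knot complement would only be needed for $H^2$ and beyond.
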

\begin{proof}
Let $A$ be the representation $A(x) = w^b$, $A(y) = w^a$. Consider the cocycles for $H^1( \Gamma , A)$. Such a cocycle is determined by its values $\alpha, \beta$ on $x$ and $y$. The equation $x^a = y^b$ gives the cocycle condition
\begin{equation*}
(1 + w^b + w^{2b} + \dots + w^{ab-b})\alpha = (1+w^a + w^{2a} + \dots + w^{ab-a})\beta.
\end{equation*}
A coboundary is a solution to the cocycle condition of the form $\alpha = (w^b-1)z$, $\beta = (w^a-1)z$, for some $z \in \mathbb{F}_q$. The result now follows easily.
\end{proof}

\subsection{Case of $GL_2$}

Assume first that $a$ and $b$ are odd. By Equation (\ref{equhomtk}) and the Frobenius-Schur indicators for $GL_2(\mathbb{F}_q)$, we get:

\begin{equation*}
\begin{aligned}
\frac{|Hom(\Gamma , GL_2(\mathbb{F}_q))|}{|GL_2(\mathbb{F}_q)|} &= \sum_{\chi} \nu_a(\chi) \nu_b(\chi) \\
&= \frac{1}{2} \sum_{\alpha \neq \beta} \left( \delta_{\alpha^a} \delta_{\beta^a} + \frac{(a-1)}{2}\delta_{\alpha^a \beta^a} \right)
\left( \delta_{\alpha^b} \delta_{\beta^b} + \frac{(b-1)}{2}\delta_{\alpha^b \beta^b} \right) + 1 \\
& \; \; \; \; \; + \frac{(a-1)(b-1)}{2} + \frac{1}{2} \sum_{\omega \neq \omega^q} \left( -\delta_{\omega^a} + \frac{(a-1)}{2}\delta_{\omega^{(q+1)a}} \right)
\left( -\delta_{\omega^b} + \frac{(b-1)}{2}\delta_{\omega^{(q+1)b}} \right)\\
&= 1 + (a-1)(b-1)\frac{(q+2)}{4}.
\end{aligned}
\end{equation*}

\noindent Hence by Theorem \ref{thmcountingredrank2},

\begin{equation*}
\begin{aligned}
A_{GL_2}(q) &= \frac{ | Hom(\Gamma , GL_2(\mathbb{F}_q)) |}{| PGL_2(\mathbb{F}_q)|} + (q-1)\frac{q}{2} + (q-1)\frac{q-2}{2}  -(q-1) \sum_{ \{ A | A \neq 1 \} } [b^1_A]_q \\
&= (q-1)\left( 1 + (a-1)(b-1)\frac{(q+2)}{4} + \frac{q}{2} + \frac{q-2}{2} - (a-1)(b-1) \right) \\
&= (q-1)\left( q + (a-1)(b-1)\frac{(q-2)}{4} \right).
\end{aligned}
\end{equation*}
So in particular, every irreducible components has dimension at most $2$ and the number of $2$-dimensional irreducible components is $1 + \frac{1}{4}(a-1)(b-1)$.\\

Next suppose that $a$ is even and $b$ is odd. Then:

\begin{equation*}
\begin{aligned}
\frac{|Hom(\Gamma , GL_2(\mathbb{F}_q))|}{|GL_2(\mathbb{F}_q)|} &= \sum_{\chi} \nu_a(\chi) \nu_b(\chi) \\
&= \frac{1}{2} \sum_{\alpha \neq \beta} \left( \delta_{\alpha^a} \delta_{\beta^a} + \delta_{\alpha^{a/2}\beta^{a/2}} + \frac{(a-2)}{2}\delta_{\alpha^a \beta^a} \right)
\left( \delta_{\alpha^b} \delta_{\beta^b} + \frac{(b-1)}{2}\delta_{\alpha^b \beta^b} \right) \\
& \; \; \; \; \; + 1 + \frac{a(b-1)}{2}\\
& \; \; \; \; \;  + \frac{1}{2} \sum_{\omega \neq \omega^q} \left( -\delta_{\omega^a} + \delta_{\omega^{(q+1)a/2}} + \frac{(a-2)}{2}\delta_{\omega^{(q+1)a}} \right)
\left( -\delta_{\omega^b} + \frac{(b-1)}{2}\delta_{\omega^{(q+1)b}} \right)\\
&= 1 + \frac{a(b-1)}{4}(q+1).
\end{aligned}
\end{equation*}

\noindent Hence,

\begin{equation*}
\begin{aligned}
A_{GL_2}(q) &= (q-1)\left( 1 + \frac{a(b-1)}{4}(q+1) + \frac{q}{2} + \frac{q-2}{2} -(a-1)(b-1) \right) \\
&= (q-1)\left( q + (b-1)\frac{(aq-3a+4)}{4} \right).
\end{aligned}
\end{equation*}

\noindent In particular, the number of $2$-dimensional irreducible components is $1 + \frac{1}{4}a(b-1)$.
\vspace{0.2cm}

\subsection{Case of $SL_2$}

First, suppose $a$ and $b$ are both odd. By Equation (\ref{equhomtk}) and the Frobenius-Schur indicators for $SL_2(\mathbb{F}_q)$, we get:
\begin{equation*}
\begin{aligned}
\frac{|Hom(\Gamma , SL_2(\mathbb{F}_q))|}{|SL_2(\mathbb{F}_q)|} &= \sum_{\chi} \nu_a(\chi) \nu_b(\chi) \\
&= \frac{1}{2} \sum_{\alpha \neq 1,\alpha_0} \left( \delta_{\alpha^a} + \frac{(a-1)}{2}(1+\alpha(-1)) \right) \left( \delta_{\alpha^b} + \frac{(b-1)}{2}(1+\alpha(-1)) \right) \\
& \; \; \; \; \; + 2\left( \frac{a-1}{2} \right) \left( \frac{b-1}{2} \right) + \frac{1}{2} \sum_{\omega \neq 1,\omega_0} 2 \left( \frac{a-1}{2} \right) \left( \frac{b-1}{2} \right) (1 + \omega(-1)) \\
& \; \; \; \; \; + 1 + (a-1)(b-1) \\
&= 1 + \frac{1}{2}(a-1)(b-1)(q+2).
\end{aligned}
\end{equation*}

\noindent Thus by Theorem \ref{thmcountingredrank2},

\begin{equation*}
\begin{aligned}
A_{SL_2}(q) &= \frac{ | Hom(\Gamma , SL_2(\mathbb{F}_q)) |}{|PGL_2(\mathbb{F}_q)|} + \frac{q}{2}+ \frac{q-2}{2}  - \sum_{ \{ A | A^2 \neq 1 \} }[b^1_{A^2}]_q \\
&= 1 + \frac{1}{2}(a-1)(b-1)(q+2) + (q-1) - \sum_{ \{ A | A^2 \neq 1 \} } [b^1_{A^2}]_q \\
&= q + \frac{1}{2}(a-1)(b-1)(q+2) - 2(a-1)(b-1) \\
&= q + \frac{1}{2}(a-1)(b-1)(q-2).
\end{aligned}
\end{equation*}
In particular, this implies that the irreducible components of the character variety have at most dimension $1$ and that the number of $1$-dimensional irreducible components is $1 + \frac{1}{2}(a-1)(b-1)$. This agrees with the results of \cite{mun}.\\

Now we suppose $a$ is even and $b$ is odd. For this calculation we will also need to assume $q = 1 \; ({\rm mod} \; 2a)$. Then: 
\begin{equation*}
\begin{aligned}
\frac{|Hom(\Gamma , SL_2(\mathbb{F}_q))|}{|SL_2(\mathbb{F}_q)|} &= \sum_{\chi} \nu_a(\chi) \nu_b(\chi) \\
&= \frac{1}{2} \sum_{\alpha \neq 1,\alpha_0} \left( \delta_{\alpha^a} -1 + \frac{a}{2}(1+\alpha(-1)) \right) \left( \delta_{\alpha^b} + \frac{(b-1)}{2}(1+\alpha(-1)) \right) \\
& \; \; \; \; \; + 2\left( \frac{a}{2} \right) \left( \frac{b-1}{2} \right) + \frac{1}{2} \sum_{\omega \neq 1,\omega_0} (a-1) \left( \frac{b-1}{2} \right) (1 + \omega(-1)) \\
& \; \; \; \; \; + 1 + (a-1)(b-1) \\
&= (a-1)(b-1)\left( \frac{q-1}{4} \right) -\frac{(b-1)}{2} + \frac{a(b-1)}{2} \\
& \; \; \; \; \; + (a-1)(b-1)\left( \frac{q-1}{4} \right) + 1 + (a-1)(b-1) \\
& = 1 + \frac{1}{2}(a-1)(b-1)(q+2).
\end{aligned}
\end{equation*}
So we again have that $A_{SL_2}(q) = q + \frac{1}{2}(a-1)(b-1)(q-2)$ and that the number of $1$-dimensional irreducible components is $1+\frac{1}{2}(a-1)(b-1)$. Again, this is in agreement with \cite{mun}.

\section{$SL_3$, $GL_3$ character varieties for fundamental groups of compact oriented surfaces}\label{seccomputgl3sl3}

Let $\Gamma = \pi_1(\Sigma_g)$, where $\Sigma_g$ is a compact oriented surface of genus $g \ge 1$.

\subsection{Case of $GL_3$}
We assume that $q = 1 \; ({\rm mod} \; 6)$. By (\ref{equcounting}) and the calculations in Section \ref{seccasegl3sl3}, we have:

\begin{equation*}
\begin{aligned}
\frac{A_{GL_3}(q)}{m_{q-1}} &= \frac{|Hom(\Gamma, GL_3(\mathbb{F}_q))|}{m_{q-1}|PGL_3(\mathbb{F}_q)|} + \left( 1 - \frac{1}{q-1} -2[4g-4]_q \right)|\mathcal{X}_2^{(1)}| \\
& \; \; \; \; \; + \frac{1}{6}\left( 1 - \frac{1}{(q-1)^2}\right) (m_{q-1}^2-3m_{q-1}+2)\\
& \; \; \; \; \; + \frac{1}{2}\left( 1 - \frac{1}{(q^2-1)}\right)\left( m_{q^2-1} - m_{q-1}\right) + \frac{1}{3}\left( 1 - \frac{1}{(q^2+q+1)}\right)\left( \frac{m_{q^3-1}}{m_{q-1}}-1\right) \\
& \; \; \; \; \; + \left( 1 - \frac{1}{q(q+1)(q-1)^2}\right)(m_{q-1}-1)  +\left(1 - \frac{1}{q^3(q^2-1)(q^3-1)}\right) \\
& \; \; \; \; \; - \frac{(m_{q-1}^2-3m_{q-1}+2)}{(q-1)}[2g-2]_q - 2\frac{(m_{q-1}-1)}{q(q-1)}[2g-2]_q \\
\end{aligned}
\end{equation*}
\begin{equation*}
\begin{aligned}
& \; \; \; \; \; - \frac{(m_{q-1}-1)}{q(q-1)}[2g]_q -  \frac{1}{q^3(q-1)}[2g]_q - (m_{q-1}^2 -3m_{q-1} +2) [2g-2]_q^2 q^{2g-2} \\
& \; \; \; \; \; - 2(m_{q-1}-1) [2g-2]_q [2g]_q q^{2g-3} - (m_{q-1}-1) [2g-2]_q[2g-3]_q q^{2g-1} \\
& \; \; \; \; \; -  [2g]_q [2g-2]_q q^{2g-2} - [2g]_q q^{2g-3} - (m_{q-1}^2 -3m_{q-1} +2) [2g-2]^2_q \\
& \; \; \; \; \; - \frac{2}{q}( m_{q-1} - 1) [2g]_q [2g-2]_q - ( m_{q^2-1} - m_{q-1}) [2g-2]_{q^2} \\
& \; \; \; \; \; - 2\frac{(m_{q-1}-1)}{(q+1)} [2g-2]_q [2g-3]_q - \frac{2}{q^2(q+1)} [2g]_q [2g-1]_q.
\end{aligned}
\end{equation*}

\noindent Collecting some terms and noting that $m_{q^3-1}/m_{q-1} = m_{q^2+q+1}$ gives:

\begin{equation*}
\begin{aligned}
\frac{A_{GL_3}(q)}{m_{q-1}} &= \frac{|Hom(\Gamma, GL_3(\mathbb{F}_q))|}{m_{q-1}|PGL_3(\mathbb{F}_q))|} + \left( 1 - \frac{1}{q-1} -2[4g-4]_q \right)|\mathcal{X}_2^{(1)}| + \frac{1}{6}\left( 1 - \frac{1}{(q-1)^2}\right)m_{q-1}^2 \\
& \; \; \; \; \; + \frac{1}{2}\left( 1 - \frac{1}{(q^2-1)}\right)m_{q^2-1} + \frac{1}{3}\left( 1 - \frac{1}{(q^2+q+1)}\right)m_{q^2+q+1} \\
& \; \; \; \; \;  - \frac{1}{q^3} + \frac{m_{q-1}}{q(q-1)} - \frac{(m_{q-1}^2 -3m_{q-1} +2)}{(q-1)}[2g-2]_q - 2\frac{(m_{q-1}-1)}{q(q-1)}[2g-2]_q \\
& \; \; \; \; \; - \frac{(m_{q-1}-1)}{q(q-1)}[2g]_q -  \frac{1}{q^3(q-1)}[2g]_q - (m_{q-1}^2 -3m_{q-1} +2) [2g-2]_q^2 q^{2g-2} \\
& \; \; \; \; \; - 2(m_{q-1}-1) [2g-2]_q [2g]_q q^{2g-3} - (m_{q-1}-1) [2g-2]_q[2g-3]_q q^{2g-1} \\
& \; \; \; \; \; - [2g]_q [2g-2]_q q^{2g-2} - [2g]_q q^{2g-3} - (m_{q-1}^2 -3m_{q-1} +2) [2g-2]^2_q \\
& \; \; \; \; \; - \frac{2}{q}( m_{q-1} - 1) [2g]_q [2g-2]_q - ( m_{q^2-1} - m_{q-1}) [2g-2]_{q^2} \\
& \; \; \; \; \; - 2\frac{(m_{q-1}-1)}{(q+1)} [2g-2]_q [2g-3]_q - \frac{2}{q^2(q+1)} [2g]_q [2g-1]_q.
\end{aligned}
\end{equation*}

\noindent Further simplifying, we get:

\begin{equation*}
\begin{aligned}
\frac{A_{GL_3}(q)}{m_{q-1}} &= \frac{|Hom(\Gamma, GL_3(\mathbb{F}_q))|}{m_{q-1}|PGL_3(\mathbb{F}_q))|} + \left( 1 - \frac{1}{q-1} -2[4g-4]_q \right)|\mathcal{X}_2^{(1)}| + \frac{1}{6}\left( 1 - \frac{1}{(q-1)^2}\right)m_{q-1}^2 \\
& \; \; \; \; \; + \frac{1}{2}\left( 1 - \frac{1}{(q^2-1)}\right)m_{q^2-1} + \frac{1}{3}\left( 1 - \frac{1}{(q^2+q+1)}\right)m_{q^2+q+1} \\
& \; \; \; \; \;   -q^{6g-6} + \frac{m_{q-1}}{q(q-1)} - \frac{(m_{q-1}^2 -3m_{q-1})}{(q-1)}[2g-2]_q - 2\frac{m_{q-1}}{q(q-1)}[2g-2]_q \\
& \; \; \; \; \; - \frac{m_{q-1}}{q(q-1)}[2g]_q - (m_{q-1}^2 -3m_{q-1}) [2g-2]_q^2 q^{2g-2} - 2m_{q-1} [2g-2]_q [2g]_q q^{2g-3} \\
& \; \; \; \; \; - m_{q-1} [2g-2]_q[2g-3]_q q^{2g-1} - (m_{q-1}^2 -3m_{q-1} ) [2g-2]^2_q \\
& \; \; \; \; \; - \frac{2}{q}m_{q-1} [2g]_q [2g-2]_q - ( m_{q^2-1} - m_{q-1}) [2g-2]_{q^2} \\
& \; \; \; \; \; - 2\frac{m_{q-1}}{(q+1)} [2g-2]_q [2g-3]_q.
\end{aligned}
\end{equation*}

Recall that $|\mathcal{X}_2^{(1)}|$ is given by:
\begin{equation}\label{equx21}
|\mathcal{X}_2^{(1)}| = \frac{|Hom(\Gamma, GL(2,\mathbb{F}_q))|}{|PGL(2,\mathbb{F}_q)|} -\frac{m_{q-1}^2}{2(q-1)} -\frac{m_{q^2-1}}{2(q+1)} -m_{q-1}^2[2g-2]_q -m_{q-1}q^{2g-2}. 
\end{equation}

So we get:
\begin{equation*}
\begin{aligned}
\frac{A_{GL_3}(q)}{m_{q-1}} &= \frac{|Hom(\Gamma, GL_3(\mathbb{F}_q))|}{m_{q-1}|PGL_3(\mathbb{F}_q))|} + \left( 1 - \frac{1}{q-1} -2[4g-4]_q \right)\frac{|Hom(\Gamma, GL(2,\mathbb{F}_q))|}{|PGL(2,\mathbb{F}_q)|} \\
& \; \; \; \; \; + \frac{1}{3}\left( 1 - \frac{1}{(q^2+q+1)}\right)m_{q^2+q+1} +\frac{1}{2}\left( 1 - \frac{1}{(q+1)} \right)m_{q^2-1} -q^{6g-6} \\
& \; \; \; \; \; + \left( \frac{1}{6} -\frac{1}{2(q-1)} + \frac{1}{3(q-1)^2} + \frac{1}{(q-1)} [4g-4]_q q^{2g-2} -[2g-2]_q \right) m_{q-1}^2 \\
& \; \; \; \; \; +\left( q^{4g-6} - q^{2g-2} -q^{2g-4} + [2g-2]_q[2g-1]_q + [2g-2]_q[2g-4]_q \right) m_{q-1}.
\end{aligned}
\end{equation*}

Substituting (\ref{equhomintogl3}) and (\ref{equhomintogl2}) into the above, we get:
\begin{equation*}
\begin{aligned}
\frac{A_{GL_3}(q)}{(q-1)^{2g}} &= \frac{(q^2+q)}{3} \left( q^5+q^4+q^3 \right)^{2g-2} + \frac{(q^2-q)}{2} \left( q^5-q^3 \right)^{2g-2} \\
& \; \; \; \; \; + \left( q^8-q^6-q^5+q^3 \right)^{2g-2} + \left(  q^6-q^5-q^3+q^2 \right)^{2g-2} \\
& \; \; \; \; \; + \left( q^5-q^3-q^2+1 \right)^{2g-2} + (q-2)\left( q^6-q^5-q^4+q^3 \right)^{2g-2} \\
& \; \; \; \; \; + (q-2)\left( q^5-q^4-q^3+q^2 \right)^{2g-2} + \frac{(q-2)(q-3)}{6}\left( q^5-2q^4+q^3 \right)^{2g-2} \\
& \; \; \; \; \; + \left( q-2q^{4g-4} \right)\left(\frac{(q-2)}{2}(q^2-q)^{2g-2}(q-1)^{2g-1} + \frac{q}{2}(q^2+q)^{2g-2}(q-1)^{2g-1}\right) \\
& \; \; \; \; \; + \left( q-2q^{4g-4} \right)\left((q^3-q)^{2g-2}(q-1)^{2g-1} + (q^2-1)^{2g-2}(q-1)^{2g-1}\right) \\
& \; \; \; \; \; + \frac{1}{3}(q^2+q)(q^2+q+1)^{2g-1} +\frac{1}{2}(q^2-q)(q^2-1)^{2g-1} -q^{6g-6} \\
& \; \; \; \; \; + \left( \frac{(q^2+q)}{6} + q^{6g-6}-q^{2g-1} \right) (q-1)^{4g-2} \\
& \; \; \; \; \; +\left( q^{4g-6} - q^{2g-2} -q^{2g-4}\right)(q-1)^{2g} + (q^{2g-2}-1)(q^{2g-4}+q^{2g-1}-2) (q-1)^{2g-2}.
\end{aligned}
\end{equation*}

In particular, $A_{GL_3}(q)$ is a polynomial, so it gives the $E$-polynomial of the $GL_3(\mathbb{C})$-character variety.

\subsection{Case of $SL_3$}

We continue to assume $q = 1 \; ({\rm mod} \; 6)$. From (\ref{equcounting}) and the results of Section \ref{seccasegl3sl3}, we get:

\begin{equation*}
\begin{aligned}
A_{SL_3}(q) &= \frac{|Hom(\Gamma, SL_3(\mathbb{F}_q))|}{|PGL_3(\mathbb{F}_q)|} + \left( 1 - \frac{1}{q-1} -2[4g-4]_q \right)|\mathcal{X}_2^{(1)}| \\
& \; \; \; \; \; + \frac{1}{6}\left( 1 - \frac{1}{(q-1)^2}\right)
\left(m_{q-1}^2-3m_{q-1}+2m_3\right) \\
& \; \; \; \; \; + \frac{1}{2}\left( 1 - \frac{1}{(q^2-1)}\right)\left( m_{q^2-1} - m_{q-1}\right) + \frac{1}{3}\left( 1 - \frac{1}{(q^2+q+1)}\right)\left( m_{q^2+q+1}-m_3\right) \\
& \; \; \; \; \; + \left( 1 - \frac{1}{q(q+1)(q-1)^2}\right)(m_{q-1}-m_3)  +\left(1 - \frac{1}{q^3(q^2-1)(q^3-1)}\right)m_3 \\
& \; \; \; \; \; - \frac{(m_{q-1}^2 -3m_{q-1} +2m_3)}{(q-1)}[2g-2]_q - 2\frac{(m_{q-1}-m_3)}{q(q-1)}[2g-2]_q \\
& \; \; \; \; \; - \frac{(m_{q-1}-m_3)}{q(q-1)}[2g]_q -  \frac{m_3}{q^3(q-1)}[2g]_q - (m_{q-1}^2 -3m_{q-1} +2m_3) [2g-2]_q^2 q^{2g-2} \\
& \; \; \; \; \; - 2(m_{q-1}-m_3) [2g-2]_q [2g]_q q^{2g-3} - (m_{q-1}-m_3) [2g-2]_q[2g-3]_q q^{2g-1} \\
& \; \; \; \; \; - m_3 [2g]_q [2g-2]_q q^{2g-2} - m_3 [2g]_q q^{2g-3} - (m_{q-1}^2 -3m_{q-1} +2m_3) [2g-2]^2_q \\
& \; \; \; \; \; - \frac{2}{q}( m_{q-1} - m_3) [2g]_q [2g-2]_q - ( m_{q^2-1} - m_{q-1}) [2g-2]_{q^2} \\
& \; \; \; \; \; - 2\frac{(m_{q-1}-m_3)}{(q+1)} [2g-2]_q [2g-3]_q - \frac{2}{q^2(q+1)}m_3 [2g]_q [2g-1]_q.
\end{aligned}
\end{equation*}
Collecting some $m_3$ terms and $m_{q-1}$ terms gives:
\begin{equation*}
\begin{aligned}
A_{SL_3}(q) &= \frac{|Hom(\Gamma, SL_3(\mathbb{F}_q))|}{|PGL_3(\mathbb{F}_q))|} + \left( 1 - \frac{1}{q-1} -2[4g-4]_q \right)|\mathcal{X}_2^{(1)}| + \frac{1}{6}\left( 1 - \frac{1}{(q-1)^2}\right)m_{q-1}^2 \\
& \; \; \; \; \; + \frac{1}{2}\left( 1 - \frac{1}{(q^2-1)}\right)m_{q^2-1} + \frac{1}{3}\left( 1 - \frac{1}{(q^2+q+1)}\right)m_{q^2+q+1} \\
& \; \; \; \; \;  - \frac{m_3}{q^3} + \frac{m_{q-1}}{q(q-1)} - \frac{(m_{q-1}^2 -3m_{q-1} +2m_3)}{(q-1)}[2g-2]_q - 2\frac{(m_{q-1}-m_3)}{q(q-1)}[2g-2]_q \\
& \; \; \; \; \; - \frac{(m_{q-1}-m_3)}{q(q-1)}[2g]_q -  \frac{m_3}{q^3(q-1)}[2g]_q - (m_{q-1}^2 -3m_{q-1} +2m_3) [2g-2]_q^2 q^{2g-2} \\
& \; \; \; \; \; - 2(m_{q-1}-m_3) [2g-2]_q [2g]_q q^{2g-3} - (m_{q-1}-m_3) [2g-2]_q[2g-3]_q q^{2g-1} \\
& \; \; \; \; \; - m_3 [2g]_q [2g-2]_q q^{2g-2} - m_3 [2g]_q q^{2g-3} - (m_{q-1}^2 -3m_{q-1} +2m_3) [2g-2]^2_q \\
& \; \; \; \; \; - \frac{2}{q}( m_{q-1} - m_3) [2g]_q [2g-2]_q - ( m_{q^2-1} - m_{q-1}) [2g-2]_{q^2} \\
& \; \; \; \; \; - 2\frac{(m_{q-1}-m_3)}{(q+1)} [2g-2]_q [2g-3]_q - \frac{2}{q^2(q+1)}m_3 [2g]_q [2g-1]_q.
\end{aligned}
\end{equation*}

The $m_3$ terms when collected give $-q^{6g-6}m_3$, so

\begin{equation*}
\begin{aligned}
A_{SL_3}(q) &= \frac{|Hom(\Gamma, SL_3(\mathbb{F}_q))|}{|PGL_3(\mathbb{F}_q))|} + \left( 1 - \frac{1}{q-1} -2[4g-4]_q \right)|\mathcal{X}_2^{(1)}| + \frac{1}{6}\left( 1 - \frac{1}{(q-1)^2}\right)m_{q-1}^2 \\
& \; \; \; \; \; + \frac{1}{2}\left( 1 - \frac{1}{(q^2-1)}\right)m_{q^2-1} + \frac{1}{3}\left( 1 - \frac{1}{(q^2+q+1)}\right)m_{q^2+q+1} \\
& \; \; \; \; \;   -q^{6g-6}m_3 + \frac{m_{q-1}}{q(q-1)} - \frac{(m_{q-1}^2 -3m_{q-1})}{(q-1)}[2g-2]_q - 2\frac{m_{q-1}}{q(q-1)}[2g-2]_q \\
& \; \; \; \; \; - \frac{m_{q-1}}{q(q-1)}[2g]_q - (m_{q-1}^2 -3m_{q-1}) [2g-2]_q^2 q^{2g-2} - 2m_{q-1} [2g-2]_q [2g]_q q^{2g-3} \\
& \; \; \; \; \; - m_{q-1} [2g-2]_q[2g-3]_q q^{2g-1} - (m_{q-1}^2 -3m_{q-1} ) [2g-2]^2_q \\
& \; \; \; \; \; - \frac{2}{q}m_{q-1} [2g]_q [2g-2]_q - ( m_{q^2-1} - m_{q-1}) [2g-2]_{q^2} \\
& \; \; \; \; \; - 2\frac{m_{q-1}}{(q+1)} [2g-2]_q [2g-3]_q.
\end{aligned}
\end{equation*}

\noindent From (\ref{equx21}), we get:

\begin{equation*}
\begin{aligned}
A_{SL_3}(q) &= \frac{|Hom(\Gamma, SL_3(\mathbb{F}_q))|}{|PGL_3(\mathbb{F}_q))|} + \left( 1 - \frac{1}{q-1} -2[4g-4]_q \right)\frac{|Hom(\Gamma, GL(2,\mathbb{F}_q))|}{|PGL(2,\mathbb{F}_q)|} \\
& \; \; \; \; \; + \frac{1}{3}\left( 1 - \frac{1}{(q^2+q+1)}\right)m_{q^2+q+1} +\frac{1}{2}\left( 1 - \frac{1}{(q+1)} \right)m_{q^2-1} -q^{6g-6}m_3 \\
& \; \; \; \; \; + \left( \frac{1}{6} -\frac{1}{2(q-1)} + \frac{1}{3(q-1)^2} + \frac{1}{(q-1)} [4g-4]_q q^{2g-2} -[2g-2]_q \right) m_{q-1}^2 \\
& \; \; \; \; \; +\left( q^{4g-6} - q^{2g-2} -q^{2g-4} + [2g-2]_q[2g-1]_q + [2g-2]_q[2g-4]_q \right) m_{q-1}.
\end{aligned}
\end{equation*}

\noindent Substituting (\ref{equhomintosl3}) and (\ref{equhomintogl2}) into the above, we get:

\begin{equation*}
\begin{aligned}
A_{SL_3}(q) &= \left( 2 \cdot 3^{2g-1} + \frac{(q-1)(q+2)}{3}\right) \left( q^5+q^4+q^3 \right)^{2g-2} + \frac{(q^2-q)}{2} \left( q^5-q^3 \right)^{2g-2} \\
& \; \; \; \; \; + \left( q^8-q^6-q^5+q^3 \right)^{2g-2} + \left(  q^6-q^5-q^3+q^2 \right)^{2g-2} \\
& \; \; \; \; \; + \left( q^5-q^3-q^2+1 \right)^{2g-2} + (q-2)\left( q^6-q^5-q^4+q^3 \right)^{2g-2} \\
& \; \; \; \; \; + (q-2)\left( q^5-q^4-q^3+q^2 \right)^{2g-2} + \left( 3^{2g-1} + \frac{(q-1)(q-4)}{6} \right) \left( q^5-2q^4+q^3 \right)^{2g-2} \\
& \; \; \; \; \; + (q-2q^{4g-4})\left( \frac{(q-2)}{2}(q^2-q)^{2g-2}(q-1)^{2g-1} + \frac{q}{2}(q^2+q)^{2g-2}(q-1)^{2g-1}\right) \\
& \; \; \; \; \; + (q-2q^{4g-4})\left( (q^3-q)^{2g-2}(q-1)^{2g-1} + (q^2-1)^{2g-2}(q-1)^{2g-1} \right) \\
& \; \; \; \; \; + \frac{1}{3}(q^2+q)(q^2+q+1)^{2g-1} +\frac{1}{2}(q^2-q)(q^2-1)^{2g-1} -q^{6g-6} 3^{2g} \\
& \; \; \; \; \; + \left( \frac{(q^2+q)}{6} + q^{6g-6}-q^{2g-1} \right) (q-1)^{4g-2} \\
& \; \; \; \; \; +\left( q^{4g-6} - q^{2g-2} -q^{2g-4}\right)(q-1)^{2g} + (q^{2g-2}-1)(q^{2g-4}+q^{2g-1}-2) (q-1)^{2g-2}.
\end{aligned}
\end{equation*}
This is a polynomial, so it gives the $E$-polynomial of the corresponding $SL_3(\mathbb{F}_q)$-character variety. Setting $q=1$, we obtain:
\begin{theorem}
For $g>1$, the Euler characteristic of $Rep(\pi_1(\Sigma_g) , SL_3(\mathbb{C}))$ is $2 \cdot 3^{4g-3} - 7 \cdot 3^{2g-2}$.
\end{theorem}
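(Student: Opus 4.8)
The plan is to invoke property (3) of the $E$-polynomial recalled in Section \ref{secepoly}: since $A_{SL_3}(q)$ has just been shown to be the $E$-polynomial of $Rep(\pi_1(\Sigma_g), SL_3(\mathbb{C}))$, and this $E$-polynomial is a polynomial in $q = uv$ alone, the topological Euler characteristic equals $E_X(1,1) = A_{SL_3}(1)$. Thus the entire proof reduces to substituting $q=1$ into the displayed formula for $A_{SL_3}(q)$ and simplifying.

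First I would record the elementary observation that makes the bookkeeping clean: since $g>1$, every exponent of the form $2g-2$, $2g-1$, $4g-2$ or $2g$ occurring as a power is positive (and the even ones are at least $2$). Consequently a large number of summands carry a factor that vanishes at $q=1$ raised to a positive power, and so contribute nothing. I would then sweep through the summands and discard those vanishing at $q=1$. The terms carrying an explicit factor of $(q^2-q)$, $(q-1)^{2g-1}$, $(q-1)^{4g-2}$, $(q-1)^{2g}$ or $(q-1)^{2g-2}$ vanish immediately; in addition, each of the bases $q^5-q^3$, $q^8-q^6-q^5+q^3$, $q^6-q^5-q^3+q^2$, $q^5-q^3-q^2+1$, $q^6-q^5-q^4+q^3$, $q^5-q^4-q^3+q^2$ and $q^5-2q^4+q^3$ evaluates to $0$ at $q=1$ while being raised to the power $2g-2\geq 2$, so those summands vanish as well.

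The only three surviving contributions are: the leading term, whose $(q-1)(q+2)/3$ part vanishes and leaves $2\cdot 3^{2g-1}\cdot(q^5+q^4+q^3)^{2g-2} = 2\cdot 3^{2g-1}\cdot 3^{2g-2} = 2\cdot 3^{4g-3}$; the term $\tfrac13(q^2+q)(q^2+q+1)^{2g-1}$, which gives $\tfrac13\cdot 2\cdot 3^{2g-1} = 2\cdot 3^{2g-2}$; and the term $-q^{6g-6}3^{2g}$, which gives $-3^{2g} = -9\cdot 3^{2g-2}$. Adding these, $2\cdot 3^{4g-3} + 2\cdot 3^{2g-2} - 9\cdot 3^{2g-2} = 2\cdot 3^{4g-3} - 7\cdot 3^{2g-2}$, which is the asserted value.

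There is no real obstacle here; the computation is purely a matter of careful evaluation. The one point deserving attention is the final summand $(q^{2g-2}-1)(q^{2g-4}+q^{2g-1}-2)(q-1)^{2g-2}$, whose vanishing at $q=1$ relies on $2g-2$ being a positive exponent. This is exactly where the hypothesis $g>1$ enters, and it explains why the Euler characteristic formula takes this form only in that range.
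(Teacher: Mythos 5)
Your proposal is correct and is essentially identical to the paper's own argument: the paper proves this theorem simply by setting $q=1$ in the displayed formula for $A_{SL_3}(q)$ (which is the $E$-polynomial by Theorem \ref{thmcounting}, so that $E_X(1,1)$ gives the Euler characteristic by property (3) of Section \ref{secepoly}), and the three surviving terms $2\cdot 3^{2g-1}\cdot 3^{2g-2}$, $\tfrac{1}{3}\cdot 2\cdot 3^{2g-1}$ and $-3^{2g}$ sum exactly as you compute. One small quibble with your closing remark: the hypothesis $g>1$ is needed not so much for the final summand (whose factor $q^{2g-2}-1$ vanishes at $q=1$ in any case) as for the summands whose base vanishes at $q=1$ while being raised to the power $2g-2$, each of which would instead contribute $0^0=1$ when $g=1$.
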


\bibliographystyle{amsplain}

\end{document}